\newcommand{\ie}{\hbox{\it i.e.\ }}
\definecolor{light}{gray}{.9}
\newlength\fullwidth
\numberwithin{equation}{section}
\DeclareMathSymbol{\leqslant}{\mathalpha}{AMSa}{"36} % nicer `smaller or equal'
\DeclareMathSymbol{\geqslant}{\mathalpha}{AMSa}{"3E} % nicer `larger or equal'
\DeclareMathSymbol{\eset}{\mathalpha}{AMSb}{"3F}     % nicer `emptyset'
\renewcommand{\leq}{\;\leqslant\;}                   % redef. of < or =
\renewcommand{\geq}{\;\geqslant\;}                   % redef. of > or =
\newcommand{\sumtwo}[2]{\sum_{\substack{#1 \\ #2}}} % sum with 2 lines
\renewcommand{\b}{\beta}
\newcommand{\TV}{{\textsc{tv}}}
\newcommand{\eps}{\epsilon}
\def\1{\ifmmode {1\hskip -3pt \rm{I}} \else {\hbox {$1\hskip -3pt \rm{I}$}}\fi}
\newcommand{\var}{\operatorname{Var}}
\newcommand{\cov}{\operatorname{Cov}}
\newcommand{\tmix}{T_{\rm mix}}
\newcommand{\trel}{T_{\rm rel}}
\newcommand{\D}{\Delta}
\renewcommand{\b}{\beta}
\renewcommand{\l}{\lambda}
\renewcommand{\L}{\Lambda}
\renewcommand{\l}{\lambda}
\renewcommand{\a}{\alpha}
\renewcommand{\d}{\delta}
\renewcommand{\t}{\tau}
\newcommand{\g}{\gamma}
\newcommand{\e}{\varepsilon}
\renewcommand{\o}{\omega}
\renewcommand{\O}{\Omega}
\newcommand{\East}{{\rm E}}
\newcommand{\Shift}{{\rm S}}
\newcommand{\Front}{{\rm F}}
\newcommand{\gap}{{\rm gap}}
\newcommand{\tc}{\thinspace |\thinspace}
\newtheorem{theorem}{Theorem}[section]
\newtheorem{lemma}[theorem]{Lemma}
\newtheorem{proposition}[theorem]{Proposition}
\newtheorem{corollary}[theorem]{Corollary}
\newtheorem{remark}[theorem]{Remark}
\newtheorem{claim}[theorem]{Claim}
\newtheorem{definition}[theorem]{Definition}
\newtheorem{maintheorem}{Theorem}
\newtheorem*{question*}{Question}
\newtheorem*{remark*}{Remark}
\newtheorem*{idefinition*}{Definition}
\newcommand{\N}{\mathbb N}
\newcommand{\cB}{\ensuremath{\mathcal B}}
\newcommand{\cC}{\ensuremath{\mathcal C}}
\newcommand{\cF}{\ensuremath{\mathcal F}}
\newcommand{\cG}{\ensuremath{\mathcal G}}
\newcommand{\cL}{\ensuremath{\mathcal L}}
\newcommand{\cN}{\ensuremath{\mathcal N}}
\newcommand{\cS}{\ensuremath{\mathcal S}}
\newcommand{\cT}{\ensuremath{\mathcal T}}
\newcommand{\cW}{\ensuremath{\mathcal W}}
\newcommand{\bbE}{{\ensuremath{\mathbb E}} }
\newcommand{\bbN}{{\ensuremath{\mathbb N}} }
\newcommand{\bbP}{{\ensuremath{\mathbb P}} }
\newcommand{\bbR}{{\ensuremath{\mathbb R}} }
\newcommand{\bbT}{{\ensuremath{\mathbb T}} }
\newcommand{\bbZ}{{\ensuremath{\mathbb Z}} }
\newcommand{\Z}{{\ensuremath{\mathbb Z}} }
\newcommand{\E}{\mathbb{E}}
\let\a=\alpha \let\b=\beta   \let\d=\delta  \let\e=\varepsilon
 \let\g=\gamma       \let\l=\lambda
\let\m=\mu   \let\n=\nu   \let\o=\omega    \let\p=\pi  
  \let\s=\sigma \let\t=\tau   
\let\D=\Delta     \let\L=\Lambda 
\let\O=\Omega      
\renewcommand{\le}{\leq}
\title{Cutoff for the East process}
\begin{document}

\author[S. Ganguly]{S.\ Ganguly}
 \address{Shirshendu Ganguly, \hfill\break
 Department of Mathematics\\ University of Washington\\
%C-541 Padelford Hall\\
Seattle, WA 98195, USA.}
\email{sganguly@math.washington.edu}

\author[E. Lubetzky]{E.\ Lubetzky}
%\address{E.\ Lubetzky\hfill\break
%Microsoft Research\\ One Microsoft Way\\ Redmond, WA 98052, USA.}
%\email{eyal@microsoft.com}
\address{Eyal Lubetzky\hfill\break
Courant Institute %of Mathematical Sciences
\\ New York University\\
251 Mercer Street\\ New York, NY 10012, USA.}
\email{eyal@courant.nyu.edu}

 \author[F. Martinelli]{F.\ Martinelli}
 \address{Fabio Martinelli\hfill\break
 Dip.\ Matematica \& Fisica\\
   Universit\`a Roma Tre\\ Largo S.\ Murialdo 1\\ 00146 Roma, Italy.}
\email{martin@mat.uniroma3.it}

\begin{abstract}
The East process is a 1\textsc{d} kinetically constrained interacting particle system,
introduced in the physics literature in the early 90's to model liquid-glass transitions.
Spectral gap estimates of Aldous and Diaconis in 2002
imply that its mixing time on
$L$ sites has order $L$. We complement that result and show cutoff with an $O(\sqrt{L})$-window.

The main ingredient is an analysis of the \emph{front} of the process (its rightmost zero in the setup where zeros facilitate updates to their right).
One expects the front to advance as a biased random walk, whose normal fluctuations would imply cutoff with an $O(\sqrt{L})$-window.
The law of the process behind the front plays a crucial role:
Blondel showed that it converges to an invariant measure $\nu$, on which very little is known. Here we obtain quantitative bounds on the speed of convergence to $\nu$, finding that it is exponentially fast. We then  derive that the increments of the front behave as a stationary mixing sequence of random variables, and a Stein-method based argument of Bolthausen (`82) implies a CLT for the location of the front, yielding the cutoff result.

Finally, we supplement these results by a study of analogous kinetically constrained models on trees, again establishing cutoff, yet this time with an $O(1)$-window.
\end{abstract}
{\baselineskip18\p\
\maketitle
}
\vspace{-0.5cm}
%\tableofcontents[1]

\section{Introduction}

The East process is a one-dimensional spin system that was introduced in the physics literature by J\"{a}ckle and Eisinger~\cite{JE91} in 1991
to model the behavior of cooled liquids near the glass transition point, specializing a class of models that goes back to~\cite{FH}.
 Each site in $\Z$ has a $\{0,1\}$-value (vacant/occupied), and, denoting this configuration by $\omega$, the process attempts to update $\omega_x$ to $1$ at rate $0<p<1$ (a parameter)
and to $0$ at rate $q=1-p$, only accepting the proposed update if
$\omega_{x-1}=0$ (a ``kinetic constraint''). 

It is the properties of the East process before and towards reaching equilibrium --- it is reversible w.r.t.\ $\pi$, the product of Bernoulli($p$) variables ---
which are of interest, with the standard gauges for the speed of convergence to stationarity being the inverse spectral-gap and the total-variation mixing time ($\gap^{-1}$ and $\tmix$)
on a finite interval $\{0,\ldots,L\}$, where we fix $\omega_0=0$ for ergodicity (postponing formal definitions to \S\ref{sec:prelims}).
That the spectral-gap is uniformly bounded away from 0 for any
$p\in(0,1)$ was first proved in a beautiful work of Aldous and
Diaconis~\cite{AD02} in 2002.
This implies that $\tmix$
is of order $L$ for any fixed threshold $0<\epsilon<1$ for the total-variation distance from $\pi$.

For a configuration $\omega$ with $\sup\{x:\omega_x=0\}<\infty$, call this rightmost 0 its \emph{front} $X(\omega)$; key questions on the East process $\omega(t)$ revolve the law $\mu^t$ of the sites behind the front at time $t$,
basic properties of which remain unknown. One can imagine that the front advances to the right as a biased walk, behind which $\mu^t \approx \pi$ (its trail is mixed).
 Indeed, if one (incorrectly!) ignores dependencies between sites as well as the randomness in the position of the front, it is tempting to conclude that $\mu^t$ converges to $\pi$, since upon updating a site $x$ its marginal is forever set to Bernoulli($p$). Whence, the positive vs.\ negative increments to $X(\omega)$ would have rates $q$ (a 0-update at
$X(\omega)+1$) vs.\ $pq$ (a 1-update at $X(\omega)$ with a 0 at its
left), giving the front an asymptotic speed $v =q^2>0$.

Of course, ignoring the irregularity near the front is problematic, since it is precisely the distribution of those spins that governs the speed of the front (hence mixing).
Still, just as a biased random walk, one expects the front to move at a positive speed with normal fluctuations,
whence its concentrated passage time through an interval would imply total-variation \emph{cutoff} --- a sharp transition in mixing --- within an $O(\sqrt{L})$-window.

To discuss the behavior behind the front, let $\Omega_\Front$ denote the set of configurations $\omega^\Front$ on the negative half-line $\Z_-$ with a fixed 0 at the origin, and let $\omega^\Front(t)$ evolve via the East process constantly re-centered (shifted by at most 1) to keep its front at the origin.
Blondel~\cite{Blondel} showed  (see Theorem~\ref{Blondel1}) that the process $\omega^\Front(t)$
converges to an invariant measure $\nu$, on which very little is
known, and that $\frac1{t} X(\omega(t)) $ converges in probability
to a positive limiting value $v$ as $t\to\infty$ (an asymptotic velocity)
given by the formula
\[
v = q-pq^* \qquad\mbox{ where }\qquad q^* := \nu(\omega_{-1}=0) .
\]
(We note that $q < q^* < q/p$ by the invariance of the measure $\nu$ and the fact that $v>0$.)

The East process $\omega(t)$ of course entails the joint distribution of $\omega^\Front(t)$ and $X(\omega(t))$; thus, it is crucial to understand the dependencies between these as well as the rate at which $\omega^\Front(t)$ converges to $\nu$
as a prerequisite for results on the fluctuations of $X(\omega(t))$.

Our first result confirms the biased random walk intuition for the front of the East process $X(\omega(t))$,
establishing a CLT for its fluctuations around $v t$ (illustrated in Fig.~\ref{fig:front}).

\begin{figure}[t]
%\centering
\includegraphics[width=.75\textwidth]{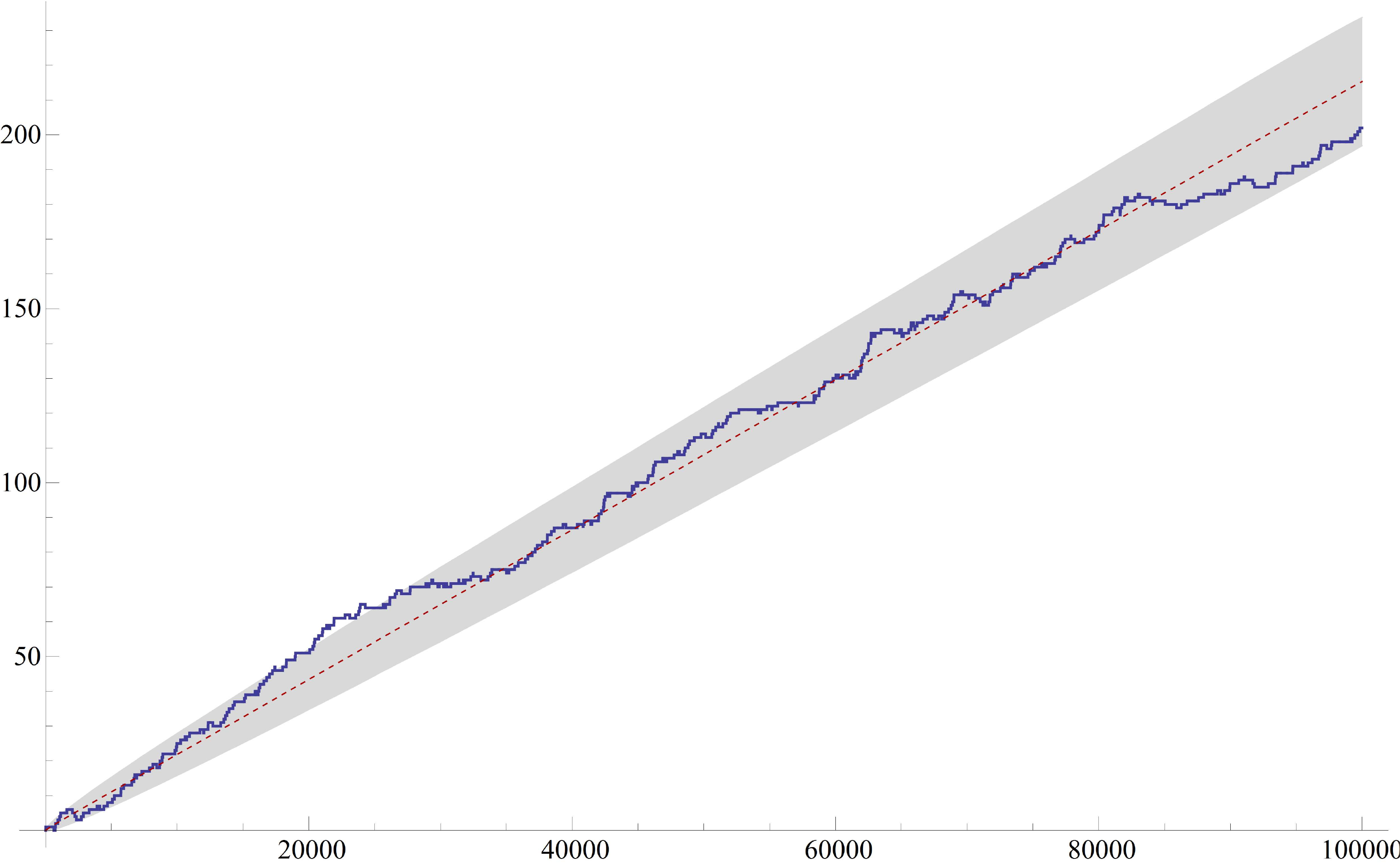}
\caption{Trajectory of the front of an East process for $p=\frac14$ along a time interval of $10^5$, vs.\ its mean and standard deviation window.}
\label{fig:front}
%\vspace{-0.25cm}
\end{figure}

\begin{maintheorem}\label{th:main1} There exists a non-negative constant
  $\s_*=\s_*(p)$ such that for all $\o\in \O_\Front$,
\begin{align}
  \label{th1.1}
\lim_{t\to \infty} \tfrac 1t X(\o(t))&=v  \quad \bbP_\o\text{-a.s},\\
\label{th1.2}
\bbE_\o\left[X(\o(t))\right]&=vt+O(1),\\
\label{th1.3}
\lim_{t\to \infty} \tfrac 1t \var_\o\left(X(\o(t))\right)&=\s_*^2.
\end{align}
Moreover, $X(\o(t))$ obeys a
central limit theorem:
\begin{equation}
  \label{th1.4}
\frac{X(\o(t))-v
    t}{\sqrt{t}} \overset{d}{\rightarrow} \cN(0,\s_*^2) \quad w.r.t.\quad \bbP_\o\mbox{ as $t\to\infty$}.
\end{equation}
\end{maintheorem}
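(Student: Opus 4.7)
My strategy is to view $X(\o(t))$ as an additive functional of the front-centered Markov process $\o^\Front(t)$ and to exploit its exponential convergence to $\nu$ (the quantitative mixing estimate singled out in the abstract). First I would record that the front can only jump by $\pm 1$: it moves forward at the deterministic rate $q$ (a fresh $0$ appears at the site immediately to the right, an update always allowed because the front is itself a $0$) and backward at rate $p\,\ind_{\{\o^\Front_{-1}(s)=0\}}$ (the constrained $0\mapsto 1$ update of the front spin, after which the new front is necessarily at $-1$). This yields the semi-martingale decomposition
\begin{equation*}
X(\o(t)) \;=\; M_t \;+\; \int_0^t \psi(\o^\Front(s))\,ds, \qquad \psi(\eta) := q - p\,\ind_{\{\eta_{-1}=0\}},
\end{equation*}
where $M_t$ is a compensated $\pm 1$-jump martingale with predictable bracket $\langle M\rangle_t=\int_0^t[q+p\,\ind_{\{\o^\Front_s(-1)=0\}}]\,ds$, and $\bbE_\nu[\psi]=q-pq^*=v$.

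Assuming next the stationary initial condition $\o^\Front(0)\sim\nu$, I would fix a mesoscopic time step $\Delta>0$ and consider the stationary increments $\xi_i := X(\o(i\Delta))-X(\o((i-1)\Delta))$. The quantitative convergence of $\o^\Front(t)$ to $\nu$ should translate into exponential $\alpha$-mixing of $(\xi_i)_{i\ge 1}$; together with Poisson-type tail bounds on the number of front jumps in a time window of length $\Delta$ (which supply uniform moment bounds), Bolthausen's Stein-method CLT for $\alpha$-mixing stationary sequences then gives
\begin{equation*}
\frac{\sum_{i=1}^n (\xi_i - v\Delta)}{\sqrt{n}} \overset{d}{\longrightarrow} \cN(0,\s^2_\Delta), \qquad \s^2_\Delta \;=\; \var_\nu(\xi_1)+2\sum_{k\ge 1}\cov_\nu(\xi_1,\xi_{1+k}),
\end{equation*}
the series being absolutely convergent by the exponential decay of correlations. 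Interpolating between the lattice times $n\Delta$ and arbitrary $t$, using that the within-window fluctuations of the front have standard deviation $O(\sqrt{\Delta})$, upgrades this to the continuous-time CLT \eqref{th1.4} with $\s_*^2:=\s^2_\Delta/\Delta$, a quantity that a routine check shows is independent of $\Delta$.

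To deduce the remaining claims for an arbitrary starting configuration $\o\in\O_\Front$, I would construct, from the quantitative convergence to $\nu$, a coupling of $\o^\Front$ started at $\o$ with a stationary copy that agrees on a growing window behind the front after exponentially short times. This immediately yields $\bbE_\o[\psi(\o^\Front(s))]=v+O(e^{-cs})$, so that integrating and adding the mean-zero martingale delivers \eqref{th1.2}. The law of large numbers \eqref{th1.1} then follows from the pointwise ergodic theorem applied to the stationary version combined with the coupling. The variance asymptotics \eqref{th1.3} come from the analogous bookkeeping on $\bbE_\o\langle M\rangle_t$, $\var_\o(\int_0^t\psi\,ds)$, and their covariance, each having a linear-in-$t$ leading term by the same exponential decay of correlations; finally the coupling transports the CLT from $\bbP_\nu$ to $\bbP_\o$.

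The chief obstacle is converting the abstract ``exponential convergence $\o^\Front(t)\to\nu$'' into the concrete $\alpha$-mixing hypothesis required by Bolthausen, notwithstanding that $\Omega_\Front$ is an infinite spatial domain and each $\xi_i$ is a functional of the whole trajectory of $\o^\Front$ on an interval of length $\Delta$, not merely of a single spin configuration. The escape is the effective locality of the front dynamics: only spins in a short window behind the origin influence the next jump, and the coupling produced in the proof of the quantitative convergence aligns two front-centered copies on such a window after exponentially short times. Verifying that this locality is strong enough to bound the $\alpha$-mixing coefficients of $(\xi_i)$ (by reducing dependence on the distant tail to an event of exponentially small probability) is the technical heart of the argument and the step where the paper's speed-of-convergence estimate must be used in full quantitative form.
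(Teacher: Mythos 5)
Your plan captures the paper's overall architecture: the increments $\xi_n=X(\o(n\D))-X(\o((n-1)\D))$ become, after a burn-in, a weakly dependent near-stationary sequence, and Bolthausen's Stein-method argument delivers the CLT; the mean and LLN come from the quantitative convergence of $\o^\Front$ to $\nu$. The semi-martingale decomposition you write down is correct (forward rate $q$ since the constraint at $X(\o)+1$ is always satisfied, backward rate $p\,\ind_{\{\eta_{-1}=0\}}$), and its consequence
\[
\frac{d}{dt}\,\bbE_\o\bigl[X(\o(t))\bigr]=q-p\,\mu^t_\o(\o_{-1}=0)=v+O(e^{-t^\alpha})
\]
is exactly how the paper proves \eqref{th1.2}. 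Two cautions on the details, though.

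First, the rate of convergence to $\nu$ proved in the paper (Theorem~\ref{coupling}) is stretched-exponential, $O(e^{-t^\alpha})$ with $\alpha\in(0,1)$, not geometric. This still yields summable mixing coefficients so the downstream variance and covariance estimates survive, but ``exponentially fast'' overstates what the coupling gives; also the coupling controls the configuration only in a window of width $\sim v^*t$ behind the front, which matters below. Second, and more substantively: you correctly flag the ``chief obstacle'' --- converting convergence to $\nu$ into the mixing hypothesis Bolthausen needs --- but then defer it, suggesting Bolthausen's theorem can be invoked once that conversion is done. The paper explicitly warns this is not possible. The operative mixing estimate on blocks of increments (Corollary~\ref{cor:wf}, eq.~\eqref{eq:21}) is only valid under the joint space--time constraint $v^*k>n\,v_{\rm max}$: because the coupling controls only a linearly growing spatial window, decorrelation of an $n$-block from the past requires the past to be at least $\sim n v_{\rm max}/v^*$ time steps old. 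This ties block length to time separation and is not an $\alpha$-mixing coefficient indexed purely by time gap, so Bolthausen cannot be used off the shelf. The paper therefore re-derives the three key Stein-type identities (\eqref{eq:19bis}--\eqref{eq:19quatris}) by hand and, for the last one, proves two auxiliary lemmas --- a sub-Gaussian bound on $|S_n-S_{j,n}|/\sqrt{\alpha_n}$ via block-splitting plus Cauchy--Schwarz repeatedly exploiting the constraint $v^*k>n v_{\rm max}$ (Lemma~\ref{large-dev}), and a bound on mixed moments of well-separated increments (Lemma~\ref{covar2}) --- precisely where a black-box invocation would fail. Finally, your route of proving the CLT under $\bbP_\nu$ and then transferring by a trajectory coupling is a plausible alternative, but the coupling the paper constructs matches laws at a single fixed time on a finite spatial window, not trajectories; the paper avoids the need for a trajectory coupling by proving the mixing bounds directly for arbitrary $\o\in\O_\Front$.
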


A key ingredient for the proof is a quantitative bound on the rate of convergence to $\nu$, showing that it is exponentially fast (Theorem~\ref{coupling}). We then show that the increments
\begin{equation}
  \label{eq-xi_n-def}
\xi_n:=X(\o(n))-X(\o(n-1)) \qquad  (n\in \bbN)
\end{equation}
behave (after an initial burn-in time) as a stationary sequence of weakly dependent random variables (Corollary~\ref{cor:wf}), 
whence one can apply an ingenious Stein's-method based argument of Bolthausen~\cite{Bolthausen} from 1982 to derive the CLT.

Moving our attention to finite volume, recall that
the \emph{cutoff phenomenon} (coined by Aldous and Diaconis~\cite{AD86}; see~\cites{Aldous,DiSh} as well as~\cite{Diaconis} and the references therein) describes a sharp transition in the convergence of a finite Markov chain to stationarity: over a negligible period of time (the cutoff window) the distance from equilibrium drops from near 1 to near $0$. Formally, a sequence of chains indexed by $L$ has cutoff around $t_L$ with window $w_L=o(t_L)$ if $\tmix(L,\epsilon) = t_L + O_\epsilon(w_L)$ for any fixed $0<\epsilon<1$.

It is well-known (see, e.g.,~\cite{DiFi}*{Example 4.46}) that a biased random walk with speed $v>0$ on an interval of length $L$ has cutoff at $v^{-1}L$ with an $O(\sqrt{L})$-window due to normal fluctuations. Recalling the heuristics that depicts the front of the East process as a biased walk flushing a law $\mu^t\approx\pi$ in its trail, one expects precisely the same cutoff behavior. Indeed, the CLT in Theorem~\ref{th:main1} supports a result exactly of this form.

\begin{maintheorem}
\label{th:main2}
The East process on $\L=\{1,2,\dots ,L\}$ with parameter $0<p<1$
exhibits cutoff at $v^{-1}L$ with an $O(\sqrt{L})$-window:
for any fixed $0<\epsilon<1$ and large enough $L$,
\begin{align*}
 \tmix(L,\eps)&= v^{-1} L + O\left(\Phi^{-1}(1-\eps)\, \sqrt{L}\right),
\end{align*}
where $\Phi$ is the c.d.f.\ of $\cN(0,1)$ and the implicit constant in the $O(\cdot)$ depends only on $p$.
\end{maintheorem}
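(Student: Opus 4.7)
The plan is to transfer Theorem~\ref{th:main1} to finite volume by exploiting the key structural fact that, because the East constraint consults only the left neighbour, the finite-volume process on $\Lambda=\{1,\dots,L\}$ with $\omega_0=0$ is identical (via a common graphical construction) to the half-line process on $\mathbb Z_+$ restricted to $\Lambda$. In particular, as long as the infinite-volume front $X(\omega(t))$ sits inside $\Lambda$, the two dynamics agree on $\Lambda$, and the passage time of $X$ across $\Lambda$, controlled by Theorem~\ref{th:main1}, is what governs $\tmix(L,\eps)$.

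\textbf{Upper bound.} A standard monotonicity argument based on the graphical representation identifies the slowest-mixing initial condition as $\mathbf{1}$, the all-$1$s configuration on $\Lambda$, which carries no interior zeros to drive the dynamics. Take
\[
t^* \;=\; v^{-1}L + \sigma_* v^{-3/2}\,\Phi^{-1}\!\bigl(1-\tfrac{\eps}{2}\bigr)\sqrt{L}.
\]
Applying Theorem~\ref{th:main1} to the half-line dynamics started from $\mathbf 1$ yields $\bbP_{\mathbf 1}(X(\omega(t^*))\geq L)\geq 1-\tfrac{\eps}{2}+o(1)$ as $L\to\infty$. Conditional on this event I would couple $\omega(t^*)$ with a stationary copy $\omega^\pi(t^*)$ through a common graphical construction and show that the two agree on $\Lambda$ with probability $\geq 1-\tfrac{\eps}{2}$. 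The exponential convergence to $\nu$ from Theorem~\ref{coupling}, together with the uniformly positive spectral gap of~\cite{AD02}, absorbs a final relaxation layer of thickness $O(\log L)=o(\sqrt L)$ into the announced window.

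\textbf{Lower bound.} Again begin from $\mathbf 1$ and set $t_* = v^{-1}L - \sigma_* v^{-3/2}\Phi^{-1}(1-\tfrac{\eps}{2})\sqrt L$. Theorem~\ref{th:main1} gives $\bbP_{\mathbf 1}(X(\omega(t_*))\leq L-c_\eps\sqrt L)\geq 1-\tfrac{\eps}{2}$ for a suitable $c_\eps>0$. On this event every site in $(X(\omega(t_*)),L]$ retains its initial value $1$, since by definition of the front these sites have never had their left neighbour become $0$. Using as distinguishing statistic the event
\[
A\;=\;\bigl\{\omega_x=1 \text{ for all } x\in [L-\lfloor c_\eps\sqrt L\rfloor,L]\bigr\},
\]
we have $\bbP_{\mathbf 1}(\omega(t_*)\in A)\geq 1-\tfrac\eps2$, while $\pi(A)=p^{c_\eps\sqrt L+O(1)}=o(1)$; hence the total-variation distance from $\pi$ at time $t_*$ is $\geq 1-\eps$ for $L$ large.

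\textbf{Main obstacle.} The delicate step is the closing estimate of the upper bound. Intuitively ``the front has swept past $L$'' says the interval has been processed by the dynamics, yet the front-frame stationary measure $\nu$ is genuinely different from the bulk stationary $\pi$ (indeed $\nu(\omega_{-1}=0)=q^*>q$). Bridging this requires combining the quantitative rate of Theorem~\ref{coupling} with a coupling argument exploiting the strict left-to-right propagation of discrepancies in the graphical construction, thereby confining any lingering disagreement to a thin layer near the front that is erased on a timescale $o(\sqrt L)$.
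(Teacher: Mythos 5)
Your lower bound is essentially the paper's, and the overall architecture (reduce to the passage time of the infinite-volume front, then invoke the CLT of Theorem~\ref{th:main1}) is correct; but the upper bound contains a genuine gap that you yourself flag as the ``main obstacle,'' and the fix is much simpler than the route you sketch.

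The paper's upper bound rests on a single clean observation that makes the whole issue with $\nu\neq\pi$ evaporate: under the grand/basic coupling (common Poisson clocks and coin tosses), at the hitting time $\tau(L)=\inf\{t:X(\omega(t))=L\}$ for the process started from $\mathbf 1$, \emph{all} initial configurations on $\Lambda$ have already coupled (see \cite{East-survey}). Once the configuration in the grand coupling no longer depends on the initial condition, the worst-case total-variation distance satisfies
\[
d_\TV(t)\le\sup_{\omega,\omega'}\bigl\|\bbP^{\Lambda,t}_\omega-\bbP^{\Lambda,t}_{\omega'}\bigr\|\le\bbP^{\Lambda}_{\mathbf 1}(\tau(L)>t)\le\bbP_{\omega^*}(X(\omega(t))<L),
\]
and the last probability is handled directly by~\eqref{th1.3}. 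There is therefore no ``final relaxation layer'' to absorb and no need to compare $\nu$ with $\pi$: the coupling-to-equilibrium step that you worry about is automatic once the front has crossed. Your proposed fix via Theorem~\ref{coupling} and a thin boundary layer is not only unnecessary but considerably harder to carry out, precisely because $\nu$ lives in the front frame on the half-line while here you need a statement on the fixed interval $\Lambda$.

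Two smaller issues. First, the East constraint $c_x(\omega)=\ind\{\omega_{x-1}=0\}$ is not monotone, so there is no ``standard monotonicity argument'' identifying $\mathbf 1$ as the worst initial condition; the paper instead passes through $\sup_{\omega,\omega'}\|\bbP^{\Lambda,t}_\omega-\bbP^{\Lambda,t}_{\omega'}\|$. Second, in your lower bound you use a distinguishing block of width $c_\epsilon\sqrt L$; with $t_*=v^{-1}L-\sigma_*v^{-3/2}\Phi^{-1}(1-\epsilon/2)\sqrt L$ the CLT gives $\bbP(X(\omega(t_*))\le L)\to1-\epsilon/2$ exactly, so shaving off an additional $\Theta(\sqrt L)$ pushes the probability strictly below $1-\epsilon/2$. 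Take the block width $a_L=o(\sqrt L)$ (the paper uses $a_L=\log L$), which still forces $\pi(A)=p^{a_L}=o(1)$, and the normal limit is unaffected. Finally, the theorem covers all $p\in(0,1)$, and the paper treats $\sigma_*=0$ separately (with an $O(1)$ window via uniformly bounded variance); your argument implicitly assumes $\sigma_*>0$.
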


\begin{figure}[t]
%\centering
\vspace{-0.5cm}
\includegraphics
[width=.75\textwidth]{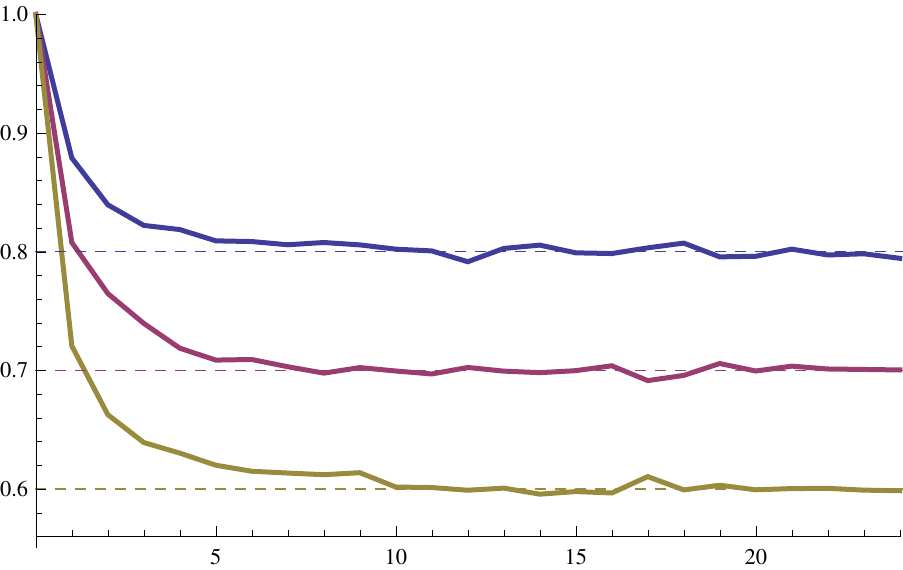}
\vspace{-0.35cm}
\caption{The invariant measure $\nu$ behind the front of the East process
(showing $\nu(\o_{-i}=0)$ simulated
via Monte-Carlo for $p\in\{0.2,0.3,0.4\}$.)}
\label{fig:nu}
\vspace{-0.5cm}
\end{figure}

While these new results relied on a refined understanding of the
convergence of the process behind the front to its invariant law
$\nu$ (shown in Fig.~\ref{fig:nu}), various basic
questions on $\nu$ remain unanswered. For instance, are the single-site
marginals of
$\nu$ monotone in the distance from the front? What are the correlations between adjacent spins? Can one explicitly obtain $q^*=\nu(\omega_{-1}=0)$, thus yielding an expression for the velocity $v$?
For the latter, we remark that the well-known upper bound on $\tmix$ in terms of the spectral-gap (Eq.~\eqref{eq-tmix-gap}), together with Theorem~\ref{th:main2}, gives the lower bound (cf.\ also~\cite{CFM})
\[
v\ge \limsup_{L\to\infty}\frac{\gap(\cL_{[0,L]})}{\log\left(1/(p\wedge q)\right)}=\frac{\gap(\cL)}{\log\left(1/(p\wedge q)\right)}.
\]
\smallskip

Finally, we accompany the concentration for $X(\omega(t))$ and cutoff for the East process by analogous results --- including cutoff with an $O(1)$-window --- on the corresponding kinetically constrained models on trees, where a site is allowed to update (i.e., to be reset into a Bernoulli($p$) variable) given a certain configuration of its children (e.g., all-zeros/at least one zero/etc.). These results are
detailed in \S\ref{sec:trees} (Theorems~\ref{th:main3}--\ref{th:main4}).

\begin{remark*}
The concentration and cutoff results for the kinetically constrained models on trees (Theorems~\ref{th:main3}--\ref{th:main4})
do not apply to every scale but rather to infinitely many scales, as is sometimes the case in the context of tightness for maxima of branching random walks or discrete Gaussian Free Fields;
see, e.g.,~\cites{BDZ,DH91} as well as the beautiful method in~\cites{BZ1,BZ2} to overcome this hurdle for certain branching random walks. Indeed, similarly to the latter, one of the models here gives rise to a distributional recursion involving the maximum of i.i.d.\ copies of the random variable of interest, plus a non-negative increment. Unfortunately, unlike branching random walks, here this increment is not independent of those two copies, and extending our analysis to every scale appears to be quite challenging.
\end{remark*}

\section{Preliminaries and tools for the East process}\label{sec:prelims}
\subsection{Setup and notation}
\label{setting-notation}
Let $\O=\{0,1\}^\bbZ$ and let $
\O^*\subset \O$ consist of those configurations $\o\in \O$ such that the
variable $X(\o):=
\sup\{x:\omega_x=0\}$ is finite.
In the sequel, for any $\o\in \O^*$  we will often refer to $X(\o)$ as
the \emph{front} of $\o$. Given $\L\subset \bbZ$ and $\o\in \O$ we
will write $\o_\L$ for the restriction of $\o$ to $\L$.

\begin{enumerate}[(i)]
\item \emph{The East process.} For any $\o\in \O$ and $x\in \bbZ$ let $c_x(\o)$ denote the indicator
of the event $\{\o_{x-1}=0\}$. We will consider the Markov
process $\{\o(t)\}_{t\ge 0}$ on $\O$ with generator acting on local functions (\ie depending
on finitely many coordinates) $f:\O\mapsto \bbR$   given by
\[
\cL f(\o)=\sum_{x\in \bbZ}c_x(\o)\left[\pi_x(f)(\o)-f(\o)\right],
\]
where $\pi_x(f)(\o):= pf(\o^{(x,1)})+qf(\o^{(x,0)})$
and $\o^{(x,1)},\o^{(x,0)}$ are the configurations in $\O$ obtained from $\o$ by fixing equal to $1$
or to $0$ respectively the coordinate at $x$. In the sequel the above
process will be referred to as the \emph{East
  process on $\bbZ$} and we will write $\bbP_\o(\cdot)$ for its law
when the starting configuration is $\o$. Average and variance w.r.t.\ to
$\bbP_\o(\cdot)$ will be denoted by $\bbE_\o[\cdot]$ and
$\var_\o(\cdot)$ respectively. Similarly we
will write $\bbP_\o^t(\cdot)$ and $\bbE_\o^t[\cdot]$ for the law and
average at a fixed time $t>0$. If the starting
configuration is distributed according to an initial distribution
$\eta$ we will simply write  $\bbP_\eta(\cdot)$ for $\int
d\eta(\o)\bbP_\o(\cdot)$ and similarly for $\bbE_\eta[\cdot]$.

It is easily seen that the East process has the following graphical
representation. To
each $x\in\bbZ$ we associate a rate-1 Poisson process and,
independently, a family of independent Bernoulli$(p)$ random variables
$\{s_{x,k} : k \in \N\}$. The occurrences of the Poisson process
associated to $x$ will be denoted by $\{t_{x,k} : k \in \N\}$.  We
assume independence as $x$ varies in $\bbZ$. That fixes the
probability space. Notice that almost surely all
the occurrences $\{t_{x,k}\}_{k\in\N, x\in\bbZ}$ are different.
On the above probability we construct a Markov process according to the following rules. At each time
$t_{x,n}$ the site $x$ queries the state of its own constraint $c_x$.
If and only if the constraint is satisfied ($c_x = 1$) then $t_{x,n}$
is called a \emph{legal ring} and the configuration resets its
value at site $x$ to the value of the corresponding Bernoulli variable
$s_{x,n}$.  Using the graphical construction it is simple to see that if $
\o\in \O^*$
then
\[
\bbP_\o(\o(t)\in \O^* \, \forall t\ge 0)=1.
\]
\item \emph{The half-line East process.} Consider now $a\in \bbZ$ and let $\O^a$ consist of those
configurations $\o\in \O$ with a \emph{leftmost} zero at $a$.
Clearly, for any $\o\in \O^a$,
$\bbP_\o(\o(t)\in \O^a\ \forall t>0)=1$
because $c_x(\o)=0$ for any $x\le a$. We will refer to the corresponding process in $\O^a$
as the East process on the half-line $(a,\infty)$. Notice that in this case the
variable at $a+1$ will always be unconstrained because $c_a(\o)=1$ for
all $\o\in \O^a$.  The corresponding generator will be denoted by $\cL_{(a,\infty)}$.
\item \emph{The finite volume East process.} Finally, if $\L\subset \bbZ$ is
a discrete interval of the form
$\L=[a+1,\dots a+L]$, the projection on $\O_\L\equiv \{0,1\}^\L$ of the half-line East process on
$(a,\infty)$ is a continuous time Markov chain because each vertex
$x\in \L$ only
queries the state of the spin to its left. In the sequel the above chain will be referred to as the
\emph{East process} in $\L$. Let $\cL_{\L}$ denote the corresponding generator.
\end{enumerate}

The main properties of the above processes can be summarized as
follows (cf.~\cite{East-survey} for a survey). They are all ergodic and
reversible w.r.t.\ to the product Bernoulli($p$) measure $\pi$ (on the
corresponding state space). Their generators
$\cL,\cL_{(a,\infty)},\cL_\L$ are self-adjoint operators on
$L^2(\pi)$ satisfying the following
natural ordering:
\[
\gap(\cL)\le \gap(\cL_{(a,\infty)})\le \gap(\cL_{\L}).
\]
\begin{remark*}
By translation invariance the value of $\gap(\cL_{(a,\infty)})$ does not
depend on $a$ and, similarly, $\gap(\cL_\L)$ depends only on the cardinality of $\L$.
\end{remark*}
As mentioned before, the fact that $\gap(\cL)>0$ (but only for $p\sim 1$) was first proved by Aldous and
Diaconis~\cite{AD02}, where it was further shown that
\begin{align}
  \label{eq-AD-gap}
  e^{-(\frac{1}{\log 2}+o(1))\log^2(1/q)} \leq \gap(\cL) \leq e^{-(\frac{1}{2\log 2}+o(1))\log^2(1/q)} \quad \mbox{as $q\downarrow 0$},
\end{align}
the order of the exponent in the lower bound matching non-rigorous predictions in the physics literature.
The positivity  of $\gap(\cL)$ was rederived and extended to all $p\in (0,1)$
in~\cite{CMRT} by different methods, and the correct asymptotics
of the exponent as
$q\downarrow 0$ --- matching the \emph{upper bound} in~\eqref{eq-AD-gap} ---
was very recently established in~\cite{CFM}. It is easy to check (e.g., from~\cite{CMRT})
that $\lim_{p\to 0}\gap(\cL)=1$, a fact that
will be used later on.

For the East process in $\L$ it is natural to consider its mixing times
$\tmix(L,\eps)$, $\eps\in (0,1)$, defined by
\[
\tmix(L,\eps)=\inf\Big\{t:\ \max_{\o\in \O_\L}\|P_\o^t(\cdot)-\pi\|\le \eps\Big\},
\]
where $\|\cdot \|$ denotes
total-variation distance.
It is a standard result for reversible Markov chains (see
e.g.~\cites{AF,LPW,Saloff}) that
\begin{equation}
  \label{eq-tmix-gap}
\tmix(L,\eps) \le \frac 12 \gap(\cL_\L)^{-1}\left(2+\log
  \frac{1}{\pi_\L^*}\right) \log \frac1\eps ,
\end{equation}
where $\pi_\L^*:= \min_{\o\in \O_\L}\pi(\o)$. In particular
$\tmix(L,\eps)\le c(p) L \log 1/\eps$.
A lower bound which also grows linearly in the length $L$ of the
interval $\L$ follows easily from the \emph{finite speed of
  information propagation}:
   If we run the East
model in $\L$ starting from the configuration of $\omega\equiv 1$
except for a zero at the origin, then, in order to create zeros
near the right boundary of $\L$ a sequence of order $L$ of successive
rings of the Poisson clocks at consecutive sites must have
occurred. That happens with probability $O(1)$ iff we allow a time
which is linear in $L$ (see \S\ref{sec:finite-speed} and in particular Lemma~\ref{finitespeed}).

\subsection{The process behind the front}\label{sec:proc-front}

Given two probability measures $\nu,\mu$ on $\O$ and $\L\subset
\bbZ$ we will write $\|\mu-\nu\|_\L$ to denote the total variation
distance between the marginals of $\mu$ and $\nu$ on $\O_\L=\{0,1\}^\L$.

When the process starts from a initial configuration $\o\in \O^*$ with
a front, it is convenient to
define a new process $\{\o^\Front(t)\}_{t\ge 0}$ on $\O_\Front:=\{\o\in \O^*:\ X(\o)=0\}$ as \emph{the
  process as seen from the front} \cite{Blondel}.  Such a process is obtained
from the original one by a random shift $-X(\o(t))$
which forces the front to be always at the origin. More precisely
we define on $\O_\Front$ the Markov process with generator $\cL^\Front=\cL^\East+\cL^\Shift$ given by
\begin{align*}
%\label{proc-front}
\cL^\East f(\o) &=
\sum_{x<0}c_x(\o)\left[\pi_x(f)(\o)-f(\o)\right], \\
\cL^\Shift f(\o)&=
(1-p)\left[f(\vartheta^-\o)-f(\o)\right]+ p \,c_0(\o)\left[f(\vartheta^+\o)-f(\o)\right],
\end{align*}
where
\begin{equation*}
\left( \vartheta^\pm\o\right)_x=
\begin{cases}
0&\text{ if $x=0$}\\
1 &\text{ if $x>0$}\\
\o_{x\mp1}&\text{ otherwise}.
\end{cases}
\end{equation*}
That is, the generator $\cL^\Front$ incorporates the moves of the East
process behind the front plus $\pm1$ shifts corresponding to whenever the front itself jumps forward/backward.
\begin{remark*}
The same graphical construction that was given for the East process $\omega(t)$ applies to the process $\omega^\Front(t)$: this is clear for the East part of the generator $\cL^\East$; for the shift part $\cL^\Shift$,  simply apply a positive
shift $\vartheta^+$ when there is a ring at the origin and the
corresponding Bernoulli variable is one. If the Bernoulli variable is
zero, operate a negative shift $\vartheta^-$.
\end{remark*}

With this notation, the main result of Blondel~\cite{Blondel} can be summarized as follows.
\begin{theorem}[\cite{Blondel}]
\label{Blondel1} The front of the East process, $X(\omega(t))$, and the process as seen from the front, $\omega^\Front(t)$, satisfy the following:
  \begin{enumerate}[(i)]
  \item There exists a unique invariant measure
$\nu$ for the process   $\{\o^\Front(t)\}_{t\ge 0}$. Moreover, $\|\n-\pi\|_{(-\infty,-x]}$
decreases exponentially fast in $x>0$.
\item Let $q^*:=\nu(\o_{-1}=0)$ and let $v= q-pq^*$. Then $v>0$ and for any $\o\in \O_\Front$,
\[
\lim_{t\to \infty}\frac{X(\o(t))}{t}\
\stackrel{\bbP_\o}{\longrightarrow} \ v.
\]
  \end{enumerate}
\end{theorem}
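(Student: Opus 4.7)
I would prove the theorem in two stages: (i) existence, uniqueness and exponential convergence to $\nu$ via compactness combined with a graphical coupling and a renewal control of the backward excursions of the front, and (ii) the law of large numbers by writing $X(\o(t))$ as an additive functional of $\o^\Front$ plus a martingale and invoking the ergodic theorem.

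\textbf{Existence, uniqueness, and exponential decay.} The state space $\O_\Front$ is a closed subset of the compact product $\{0,1\}^\Z$, and the semigroup generated by $\cL^\Front$ is Feller, so Krylov--Bogolyubov yields at least one invariant probability $\n$. For uniqueness and for convergence of $\o^\Front(t)$ to $\n$ on finite windows, I would couple two initial conditions via the standard graphical construction, using identical Poisson clocks and Bernoulli coins at each site $x<0$ together with identical shift clocks of rates $q$ and $p$. The East structure localizes the information flow: any discrepancy at a site $x<0$ is erased as soon as the clock at $x$ rings with the constraint $c_x$ simultaneously satisfied in both copies, while shift events at the origin systematically carry synchronization outward. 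An induction on $x$ then shows that the two copies agree on $[-x,0]$ with probability tending to one, ruling out multiple invariant measures. To upgrade this to the exponential decay $\|\n-\pi\|_{(-\infty,-x]}\le Ce^{-cx}$, I would start the process from $\pi$ conditioned on $\o_0=0$, under which the East part of the dynamics alone preserves $\pi$; the marginal on $(-\infty,-x]$ can then differ from $\pi$ only through backward excursions of the front that reach site $-x$. A finite-speed-of-propagation bound (of the flavor of Lemma~\ref{finitespeed}) together with a renewal decomposition at the successive maxima of the front bounds the probability of such an excursion by $Ce^{-cx}$, and invariance finishes the job.

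\textbf{LLN and $v>0$.} Decomposing the front into its predictable compensator plus a martingale of quadratic variation $O(t)$,
\[
X(\o(t))=\int_0^t \bigl[q-p\,c_0(\o^\Front(s))\bigr]\,ds+M_t,
\]
and using that $\n$ is ergodic (a consequence of uniqueness) and that the law of $\o^\Front(t)$ converges to $\n$ on finite windows from any initial condition, Birkhoff's ergodic theorem gives
\[
\tfrac1t\int_0^t c_0(\o^\Front(s))\,ds\to \n(c_0)=q^*\qquad \text{a.s.,}
\]
and together with $M_t/t\to 0$ a.s.\ this yields $X(\o(t))/t\to q-pq^*=v$ a.s. Positivity $v>0$ reduces to the strict bound $q^*<q/p$, which I would establish by a separate comparison/balance argument exploiting that the all-zero configuration on $\Z$ is not invariant under $\cL^\East$ and hence $\n(\{\o_{-1}=1\})>0$, combined with the balance equation for $\n$ obtained from invariance under $\cL^\Front$ applied to the indicator of $\{\o_{-1}=0\}$.

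\textbf{Main obstacle.} The delicate step is the exponential decay in (i): the dynamics is strongly non-product near the front, so the bulk spectral gap of $\cL$ is not directly applicable. The crux is a renewal argument for the front's backward excursions, which in turn requires a preliminary weaker positive-drift estimate (e.g.\ that the front leaves any fixed window with probability bounded below in constant time) to get off the ground; once that bootstrap is in place, the sharper decay, the uniqueness of $\n$, and the LLN with the precise value of $v$ all follow cleanly.
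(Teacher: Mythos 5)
Note first that the paper does not give a proof of Theorem~\ref{Blondel1}: it is quoted from \cite{Blondel}, and the paper's own technical contribution (Theorem~\ref{coupling}) is a quantitative strengthening of item (i). So the benchmark is Blondel's argument and the tools the paper develops to sharpen it in \S\ref{sec:prelims}--\ref{sec:front}. Your plan has the correct high-level architecture (Krylov--Bogolyubov for existence, a coupling for uniqueness, the additive-functional decomposition $X(\o(t))=\int_0^t\bigl[q-p\,c_0(\o^\Front(s))\bigr]ds+M_t$ for the LLN), but three of its steps have genuine gaps.

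\emph{Uniqueness and exponential decay.} Your coupling claim does not work as stated: whenever $\o(s)_{-1}\neq\o'(s)_{-1}$ exactly one of the two copies executes the $\vartheta^+$ shift, after which they are \emph{relatively translated} and generically disagree at every site of $(-\infty,-1]$. The shift creates discrepancies globally; it does not erode them locally, and there is no ``induction on $x$'' rescuing this. The actual argument (Blondel, and in sharpened form Theorem~\ref{coupling} here) works hard precisely because of this: one first establishes the local-relaxation estimate of Proposition~\ref{prop:key1} (powered by $\gap(\cL)>0$) to show that the law behind the front becomes $\pi$-like, then uses a maximal coupling on an intermediate window plus finite speed of propagation (Lemma~\ref{finitespeed}) to control the relative front positions --- the two-phase burn-in/mixing rounds in the definition of $M^t_{\o,\o'}$ are there for exactly this. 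Similarly, the ``backward excursion'' picture for $\|\n-\pi\|_{(-\infty,-x]}$ is not the mechanism: in the front frame, the site $-x$ is, after a time of order $x/v$, occupied by a spin that was at bounded distance from the front in the past (each $\vartheta^-$ pushes spins outward), so its marginal under $\n$ is inherited from the non-$\pi$ law near the front, not from your initial $\pi$ marginal. The exponential decay comes from the East dynamics \emph{relaxing} that inherited law to $\pi$ as it drifts away --- again Proposition~\ref{prop:key1} and the bulk spectral gap --- not from the rarity of excursions.

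\emph{Positivity of $v$.} Your balance-equation route is inverted. Integrating $\cL^\Front\bigl({\mathds 1}\{\o_{-1}=0\}\bigr)$ against $\n$ gives the exact identity $q^*=q\bigl(1+\n(\o_{-1}=1,\o_{-2}=0)\bigr)$, hence $v=q\bigl(q-p\,\n(\o_{-1}=1,\o_{-2}=0)\bigr)$. This proves $v>0$ only for $p\le 1/2$ (where $q/p\ge1$); for $p$ close to $1$ it requires a quantitative bound on a two-point marginal of $\n$, and the hierarchy of balance equations does not close, so the argument stalls. The correct route --- Blondel's, and the one the paper encodes in Lemma~\ref{linearspeed} --- is a direct positive-drift estimate $X(\o(t))\ge v_{\rm min}t$ with high probability, proved from a biased random-walk comparison (easy only for $p<1/2$) combined, for general $p$, with the positive spectral gap of \cite{AD02}; then $q^*<q/p$ is a \emph{corollary} of $v>0$, exactly as the paper's introduction remarks. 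Your ``Main obstacle'' paragraph correctly senses that a drift estimate is needed, but treats it as a bootstrap for a renewal decomposition when in fact it \emph{is} the proof of $v>0$, with the spectral gap (not a renewal argument) doing the heavy lifting.
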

Thus, if the East process has a front at time $t=0$ then it will have
a front at any later time. The latter progresses in time with an
asymptotically constant speed $v$.

\subsection{Local relaxation to equilibrium}
In this section we review the main technical results on the local
convergence to the stationary measure $\pi$ for the (infinite volume) East process. The key message here is
that \emph{each} vacancy in the starting configuration, in a time lag $t$,
induces the law $\pi$ in an interval in front of its position of length
proportional to $t$. That explains why the distance between the
invariant measure $\nu$ and $\pi$ deteriorates when we approach the
front from behind.
\begin{definition}
Given a configuration $\omega\in \Omega$ and an interval $I$ we say that $\o$
satisfies the \textbf{Strong Spacing Condition (SSC)} in $I$ if the largest sub-interval of
$I$ where $\omega$ is identically equal to one has length at most
$10\log |I|/(|\log p| \wedge 1)$.
Similarly, given $\d,\eps\in (0,1/4)$,  we will say that $\o$ satisfies the
\textbf{$(\d,\eps)$-Weak Spacing Condition (WSC)} in $I$ if the largest sub-interval
of $I$ where $\omega $ is identically equal to one has length at most
$\delta|I|^{\eps}$.
\end{definition}
For brevity, we will omit the $(\d,\eps)$ dependence in WSC case when these are made clear from the context.
\begin{proposition}
\label{prop:key1}
There exist universal positive constants $c^*,m$ independent
of $p$ such that the following holds. Let $\L=[1,2,\dots,\ell]$ and let $\o\in \O$ be such that $\o_0=0$.
Further let $\D(\o)$ be largest between the
  maximal spacing between two consecutive zeros of $\o$ in $\L$ and
  the distance of the last zero of $\o$ from the vertex $\ell$. Then
\[
\|\bbP^t_\o-\pi\|_{\L}\le \ell\left(c^*/q \right)^{\D(\o)} e^{-t
  \,(\gap(\cL)\wedge m)}.
\]
%\label{th:key2}
\end{proposition}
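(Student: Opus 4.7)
The plan is to exploit the vacancies of $\omega$ in $\{0\}\cup\Lambda$ to decompose $\Lambda$ into blocks on which the East dynamics can be controlled via the uniform lower bound $\gap(\cL_{I})\ge \gap(\cL)$ and the standard $L^{2}$--spectral-gap mixing estimate, and then to reassemble the blocks using the independence afforded by an auxiliary pinned process. Order the zeros of $\omega$ in $\{0\}\cup\Lambda$ as $0=z_{0}<z_{1}<\dots<z_{k}$ and set $z_{k+1}:=\ell+1$; by the definition of $\Delta(\omega)$ every gap $z_{i+1}-z_{i}$ is at most $\Delta(\omega)$. Introduce an auxiliary \emph{pinned} East process $\widetilde\omega(t)$, built on the same graphical construction of \S\ref{setting-notation} as $\omega(t)$ but with the values at $z_{0},\dots,z_{k}$ frozen to $0$ for all $t\ge 0$ (rings at pinned sites are ignored). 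Under the pinning, the dynamics on the disjoint blocks $I_{i}:=[z_{i}+1,z_{i+1}-1]$, $0\le i\le k$, decouple into $k+1$ independent half-line East chains of length at most $\Delta(\omega)-1$, each with a frozen boundary zero on its left and started from the all-ones configuration.

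For each block the reversible $L^{2}\to L^{1}$ spectral-gap estimate yields a total-variation bound of order $(\pi^{*}_{I_{i}})^{-1/2}e^{-t\,\gap(\cL_{I_{i}})}$; using $\gap(\cL_{I_{i}})\ge\gap(\cL)$ together with a refinement tailored to the all-ones starting configuration (discussed below), this can be upgraded to $(c^{*}/q)^{\Delta(\omega)}e^{-t\,\gap(\cL)}$ with $c^{*}$ universal. Since the blocks evolve independently under the pinned law $\widetilde{\mathbb P}^{t}_{\omega}$, subadditivity of total-variation distance under products gives
\[
\|\widetilde{\mathbb P}^{t}_{\omega}-\pi\|_{\Lambda}\;\le\;\sum_{i=0}^{k}\|\widetilde{\mathbb P}^{t}_{\omega}-\pi\|_{I_{i}}\;\le\;\ell\,(c^{*}/q)^{\Delta(\omega)}\,e^{-t\,\gap(\cL)}.
\]

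To pass from the pinned process back to the true one, it suffices to bound the probability of the event that the pinning is violated at some $z_{i}$, $i\ge 1$; such a violation requires $\omega_{z_{i}-1}=0$ at a ring time at $z_{i}$, which by the one-sided structure of the East constraint can only occur after a chain of at least $z_{i}-z_{i-1}-1$ successive rightward vacancy-propagation events. The finite-speed-of-propagation lemma of \S\ref{sec:finite-speed} (Lemma~\ref{finitespeed}) supplies an exponential-in-$t$ bound of rate $m$ on this probability, uniformly in $p$ and in the geometry of $\omega$, and a union bound over $i=1,\dots,k$ inflates it at most by $\ell$. The triangle inequality then produces the stated bound with $\gap(\cL)$ replaced by $\gap(\cL)\wedge m$.

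The main obstacle is the per-block refinement: a naive $L^{2}$ Poincar\'e estimate only gives $(p\wedge q)^{-\Delta(\omega)/2}$, which is useless as $p\downarrow 0$, whereas the proposition demands $(c^{*}/q)^{\Delta(\omega)}$ with $c^{*}$ uniform in $p$. The way to close this gap is to exploit that on each block the starting state is the all-ones configuration and that along the directional East dynamics vacancies are created from the frozen left boundary at effective rate $q$; a one-sided coupling or canonical-paths argument keyed on this specific initial configuration should supply the sharper $q^{-\Delta(\omega)}$ factor in place of the symmetric worst-case $(p\wedge q)^{-\Delta(\omega)/2}$. Packaging the unpinning estimate so that the exponential rate $m$ is universal (independent of $p$) in the small-$q$ regime is the remaining delicate technical point.
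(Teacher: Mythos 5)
Your approach --- pinning the zeros of $\omega$ to decouple $\Lambda$ into independent blocks, estimating per-block mixing, and then removing the pinning --- is genuinely different from the paper's, which instead iterates a single-site integration lemma (Lemma~\ref{infinitesupport}): it bounds $|\bbE_\omega[f(\omega(t))-\pi_\ell(f)(\omega(t))]|$ by exploiting the one-sided nature of the dynamics and reversibility (to reduce to a difference that vanishes after averaging the initial condition on $[1,\ell]$ over $\pi$), then controls that difference by the coupling time under the graphical construction, which in turn is dominated by a biased random walk hitting time. The exponent $m$ in the paper is the large-deviation rate of that biased walk, \emph{not} the finite-speed rate.

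Your proposal contains serious gaps. First, the unpinning step fails. In the pinned process the site $z_{i-1}$ is frozen at $0$, so the constraint at $z_{i-1}+1$ is satisfied from time $0$, and a vacancy propagates rightward to $z_i-1$ at positive speed; hence the unpinned process resets $\omega_{z_i}$ to Bernoulli($p$) with probability $\Theta(1)$ after a time of order $z_i-z_{i-1}$, so the event that the pinning is respected up to time $t$ has probability decaying exponentially \emph{in $t$}, not in $\Delta(\omega)$. Lemma~\ref{finitespeed} gives no help either: it bounds $\bbP(F(x,y;s,t))\le e^{-|x-y|}$ only in the regime $|x-y|\ge v_{\max}(t-s)$, so for fixed block size $\Delta(\omega)$ it yields nothing as $t\to\infty$. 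Second, even if the pinning held, the pinned law cannot be close to $\pi$ on $\Lambda$: the blocks $I_i$ omit the pinned sites $z_1,\dots,z_k$, which are frozen at $0$ under $\widetilde{\bbP}^t_\omega$ but are Bernoulli($p$) under $\pi$, so $\|\widetilde{\bbP}^t_\omega-\pi\|_\Lambda$ does not tend to $0$. Third, the crucial per-block sharpening from $(p\wedge q)^{-\Delta/2}$ to $(c^*/q)^{\Delta}$ is exactly the content that requires the coupling-by-front-hitting argument; labeling it ``a one-sided coupling or canonical-paths argument should supply'' leaves the central difficulty of the proposition unaddressed. In short, the decomposition into independent pinned blocks is not the right skeleton here: it removes precisely the mechanism (zeros propagating and being absorbed) that the proof must quantify.
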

To prove this proposition, we need the following lemma.
\begin{lemma}
\label{infinitesupport} There exist universal positive constants $c^*,m$ independent
of $p$ such that the following holds. Fix $\o\in \O$ with $\o_0=0$,
let $\ell\in
\bbN$ and let $f:\O_{(-\infty,\ell\,]}\mapsto
\bbR$ with $\|f\|_\infty\le 1$. Let also $\pi_{\ell}(f)$ denote the new
function obtained by averaging $f$ w.r.t. the marginal of $\pi$ over the
spin at $x=\ell$. Then,
\begin{equation}
  \label{eq:fabio2}
|\E_{\omega}\left[f(\omega(t))-\pi_{\ell}(f)(\omega(t))\right]|\le
(c^*/q)^\ell e^{-t\, (\gap(\cL)\wedge m)}.
\end{equation}
\end{lemma}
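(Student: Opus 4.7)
The plan is to isolate the contribution of the spin at site $\ell$ via an algebraic identity, reducing the claim to bounding a ``no effective update'' probability, which can then be controlled by combining finite speed of propagation with the spectral gap. The starting point is the pointwise identity
\[
f(\omega) - \pi_\ell(f)(\omega) = (\omega_\ell - p)\, h(\omega), \qquad h(\omega) := f(\omega^{(\ell,1)}) - f(\omega^{(\ell,0)}),
\]
in which $\|h\|_\infty \le 2$ and, crucially, $h$ depends only on $\omega_x$ with $x \le \ell - 1$.

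Next, let $\cF$ denote the $\sigma$-algebra generated by all the Poisson clocks together with the Bernoulli coins at every site other than $\ell$. Since the East constraint $c_x$ depends only on $\omega_{x-1}$, the trajectory $s \mapsto \omega_{\le \ell - 1}(s)$ on $[0,t]$ is $\cF$-measurable, and so is the event
\[
E := \{\text{some ring at } \ell \text{ in } [0,t] \text{ occurs at a time } s \text{ with } \omega_{\ell-1}(s) = 0\}.
\]
On $E$, the value $\omega_{\ell}(t)$ coincides with the Bernoulli$(p)$ coin attached to the last legal ring at $\ell$ and is therefore independent of $\cF$; on $E^c$, it equals the initial value $\omega_{\ell}(0)$. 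Consequently,
\[
\bigl|\E_\omega[(f - \pi_\ell f)(\omega(t))]\bigr| = \bigl|\E_\omega[(\omega_\ell(0) - p)\, h(\omega(t))\, \mathbf{1}_{E^c}]\bigr| \le 2\, \bbP_\omega(E^c),
\]
and integrating out the independent Poisson clock at $\ell$ (conditionally on the trajectory of $\omega_{\ell-1}$) gives $\bbP_\omega(E^c) = \E_\omega[\exp(-\tau_{\ell-1})]$ with $\tau_{\ell-1} := \int_0^t \mathbf{1}[\omega_{\ell-1}(s) = 0]\, ds$. The problem is thus reduced to showing that $\tau_{\ell-1}$ is of order $t$ with overwhelming probability, up to a prefactor absorbed in $(c^*/q)^\ell$.

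For this last step I envisage a two-part argument. First, monotonicity on the graphical construction lets one replace $\omega$ by its worst-case version (equal to $0$ at the origin and to $1$ everywhere to its right); for this initial condition, a direct path-counting estimate over chains of $\ell$ successive legal zero-producing rings at sites $1, 2, \dots, \ell-1$ yields a lower bound of order $q^{\ell}$ (times an explicit combinatorial factor from the ordering of $\ell$ independent Poisson events) on the probability that site $\ell-1$ has been driven to $0$ by some short burn-in time $t_0 = O(\ell)$; the reciprocal of this quantity produces the $(c^*/q)^\ell$ prefactor. Second, once $\omega_{\ell-1}$ has reached $0$, the $L^2(\pi)$-contractive estimate for the half-line East process on $(-\infty, \ell]$ (whose spectral gap dominates $\gap(\cL)$) controls the fraction of time that $\omega_{\ell-1}$ spends at $0$ in $[t_0, t]$, and a Poincar\'e-based concentration bound for the occupation-time functional $\tau_{\ell-1}$ around its mean then produces the $e^{-t(\gap(\cL)\wedge m)}$ decay. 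The main obstacle will be matching the short-time combinatorial estimate cleanly with the long-time $L^2$-spectral-gap estimate, without incurring additional multiplicative error from the (potentially adversarial) initial data sitting far to the left of $0$; finite speed of propagation, applied within the forward light-cone of the zero at $0$, should provide the needed decoupling and ensure that only the $\ell$ sites in $\{0,1,\dots,\ell-1\}$ contribute to the propagation cost.
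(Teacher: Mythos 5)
Your opening algebraic identity $f(\omega)-\pi_\ell(f)(\omega)=(\omega_\ell-p)\,h(\omega)$ with $h$ depending only on $\omega_{\le\ell-1}$ is correct and elegant, and the conditioning on the clocks-and-other-coins $\sigma$-algebra is a clean way to reduce the left side to $2\,\bbP_\omega(E^c)=2\,\bbE_\omega\bigl[e^{-\tau_{\ell-1}}\bigr]$, where $\tau_{\ell-1}$ is the time site $\ell-1$ spends at $0$. This opening is genuinely different from the paper's, which instead uses reversibility to replace the fixed initial condition on $[1,\ell]$ by a $\pi_{[1,\ell]}$-distributed one, then the graphical-construction grand coupling, and finally a direct comparison of the front of the all-ones process $\omega^*$ to a biased random walk. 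The reduction you obtain is in fact sharper than the paper's intermediate bound $4\,\bbP_{\omega^*}(\tau_\ell>t)$.

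The rest of the argument, however, has real gaps. First, the East process is \emph{not} monotone in the initial configuration, so you cannot invoke ``monotonicity on the graphical construction'' to pass to the worst-case $\omega^*$; what is true (and what the paper uses) is the specific grand-coupling observation that processes agreeing on $(-\infty,0]$ couple on $(-\infty,j]$ from the first ``common'' legal ring at $j$ onward, and since the $\omega^*$ trajectory has $\omega^*_{\ell-1}\equiv 1$ until that coupling time, $\tau_{\ell-1}$ is indeed minimized by $\omega^*$. This is salvageable, but it is not generic monotonicity. Second, and more seriously, your description of how to extract the prefactor and the $e^{-t(\gap\wedge m)}$ decay from $\bbE_{\omega^*}[e^{-\tau_{\ell-1}}]$ does not work as stated. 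The burn-in estimate says that within a window of length $O(\ell)$ the probability of a successful zero-chain is only of order $q^\ell$; ``taking the reciprocal'' of that does not produce the $(c^*/q)^\ell$ prefactor, and a naive restart argument (repeat the window $t/t_0$ times, Chernoff) gives a decay rate of order $q^\ell/\ell$, i.e.\ a rate that degrades with $\ell$, whereas the target bound has the $\ell$-dependence entirely in the prefactor and a rate $\gap(\cL)\wedge m$ independent of $\ell$. The correct mechanism is the one the paper uses: the front reaches $\ell$ after a burn-in time $O(\ell)$ with probability $1-e^{-m(t-c\ell)}$ (not $q^\ell$), and the harmless factor $e^{mc\ell}$ from shifting $t$ to $t-c\ell$ is what gets absorbed in $(c^*/q)^\ell$. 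Finally, the ``Poincar\'e-based concentration for the occupation-time functional'' step is not a Poincar\'e estimate (Poincar\'e gives a variance bound, not exponential tails) and in any case would have to cope with a non-equilibrium starting law whose density relative to $\pi$ is not obviously in $L^2$; this is a nontrivial obstruction that you do not address. In short, the reduction to $\bbE_\omega[e^{-\tau_{\ell-1}}]$ is a nice alternative start, but the remaining occupation-time control would need to be rebuilt around the front-hitting-time comparison that the paper uses directly, at which point the detour buys nothing.
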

\begin{remark*}
If we replaced the r.h.s.\ of \eqref{eq:fabio2} with $\left(2\sqrt{2}/(p\wedge q)\right)^\ell e^{-t\, \gap(\cL)}$, then the statement
would coincide with that in \cite{Blondel}*{Proposition
  4.3}. Notice that as $p \downarrow 0$, the term $c^*/q$ does not blow up---
  unlike $2\sqrt{2}/(p\wedge q)$---and as remarked below~\eqref{eq-AD-gap}, $\gap(\cL)$ stays bounded away from $0$. Hence, as $p\downarrow 0$, the time after which the r.h.s.\ in \eqref{eq:fabio2} becomes small is bounded from
above by $C_0\times \ell$ for some universal $C_0>0$ not depending on $p$. This fact will be crucially used in the proofs of some of the theorems to follow.
\end{remark*}
\begin{proof}[Proof of Lemma~\ref{infinitesupport}]
As mentioned in the remark using \cite{Blondel}*{Proposition
  4.3} it suffices to assume that
$p<1/3$. Fix $\o$ as in the lemma and let $\O^\o_{(-\infty,\ell\,]}$ be the set of all configurations
$\o'\in \O_{(-\infty,\ell\,]}$ which coincides with $\o$ on the half
line $(-\infty,0]$. The special configuration in
$\O^\o_{(-\infty,\ell\,]}$ which is identically equal to one in the
interval $[1,\ell]$ will be denoted by $\o^*$.
Observe that, using reversibility together with the fact that the
updates in $(-\infty,0]$ do not check the spins to the right of the
origin,
\begin{align}
\label{eq:fabio}
\sum_{\o'\in \O^\o_{(-\infty,\ell\,]}}\pi_{[1,\ell]}(\o')
\E_{\omega'}\left[f(\omega'(t)\right]
&=\E_{\omega}\left[\pi_{[1,\ell]}(f)(\omega(t)\right]\nonumber\\
\sum_{\o'\in \O^\o_{(-\infty,\ell\,]}}\pi_{[1,\ell]}(\o')
\E_{\omega'}\left[\pi_\ell(f)(\omega'(t)\right]
&=\E_{\omega}\left[\pi_{[1,\ell]}(f)(\omega(t)\right].
\end{align}
Using the
graphical construction as a grand coupling for the processes with
initial condition in $\O^\o_{(-\infty,\ell\,]}$, it is easy to verify
  that, at the hitting time $\tau_\ell$
of the set $\{\o'\in  \O_{(-\infty,\ell]}:\ \o'_\ell=0\}$ for the
process started from
$\o^*$, the processes starting from \emph{all} possible initial conditions in $\O^\o_{(-\infty,\ell\,]}$
have coupled.
Let $\o'\in\O^\o_{(-\infty,\ell\,]}$ be distributed according to $\pi_{[1,\ell]}.$
Then using the grand coupling,
\begin{align*}
  |\E_{\omega}\left[f(\omega(t))-\pi_{\ell}(f)(\omega(t))\right]| &=|\E_{\omega,\pi_{[1,\ell]}}\left[f(\omega(t))-f(\omega'(t))+\pi_{\ell}(f)(\omega'(t))-\pi_{\ell}(f)(\omega(t))\right]|\\
&\le 4 \sup_{\o'\in \O^\o_{(-\infty,\ell\,]}}\bbP(\exists \, x\in
  [1,\ell]:\ \o_x(t)\neq \o_x'(t))\\
&\le 4 \bbP_{\o^*}(\tau_\ell >t)\\
&\le 4 \bbP_{\o^*}(X(\o^*(t))<\ell).
\end{align*}
The first equality follows by adding and subtracting $\E_{\omega}\left[\pi_{[1,\ell]}(f)(\omega(t)\right]$ from the l.h.s.\ and then using  \eqref{eq:fabio}. The rest of the inequalities are immediate from the above discussion.
In order to bound the above probability, we observe that the front
$X(\o^*(t))$, initially at $x=0$, can be coupled to an asymmetric random
walk $\xi(t)$, with $q (\rm{resp.} p)$ as jump rate to the right(resp. left), in such a way that $X(\o^*(t))\ge \xi(t)$ for all $t\ge 0$. Since we have assumed that $p< 1/3,$
by standard hitting time estimates for biased random walk there exist universal constants $c,m$ such that, for $t\ge c
\ell$, the above probability is smaller than $e^{-mt}$.
\end{proof}
%\noindent
\begin{proof}[Proof of Proposition~\ref{prop:key1}]
Let $\o\in \O$ be such that $\o_0=0$.
Then
\begin{align*}
\max_{\substack{ f:\O_\L\mapsto \bbR \\
  \|f\|_\infty\le 1}}
  &|\bbE_\o\left[f(\omega(t))-\pi(f)\right]| \\
&\le  \max_{\substack{f:\O_\L\mapsto \bbR \\
  \|f\|_\infty\le
  1}}|\bbE_\o\left[f(\omega(t))-\pi_\ell(f)(\omega(t))\right]|+
\max_{\substack{f:\O_\L\mapsto \bbR \\ \|f\|_\infty\le 1}}|\bbE_\o\left[\pi_\ell(f)(\omega(t))-\pi(f)\right]|
  \\
&\le (c^*/q )^{\D(\o)} e^{-t\,
  (\gap(\cL)\wedge m)} +
\max_{\substack{f:\O_\L\mapsto \bbR \\\|f\|_\infty\le 1}}|\bbE_\o\left[\pi_\ell(f)(\omega(t))-\pi(f)\right]|,
\end{align*}
where we applied the above lemma to the shifted configuration in which
the origin coincides with the rightmost zero in $\L$ of $\o$. \\
We now
observe that the new function $\pi_\ell(f)$ depends only on the first
$\ell-1$ coordinates of $\o$ and that $\|\pi_\ell(f)\|_\infty\le
1$. Thus we can iterate the above bound $(\ell -1)$ times to get that
\begin{equation*}
\|\bbP_\o^t-\pi\|_\L\le 2\max_{\substack{f:\O_\L\mapsto \bbR \\
  \|f\|_\infty\le 1}}|\bbE_\o\left[f(\omega(t))-\pi(f)\right]| \le
\ell (c^*/q )^{\D(\o)} e^{-t\,(\gap(\cL)\wedge m)}.
\qedhere
\end{equation*}
  \end{proof}

  \begin{corollary}
\label{cor:spacing}
 Fix $\o\in \O^*\,$, $\ell \in \bbN$  and let
$I^\ell_\o=[X(\o),X(\o)+\ell-1]$. Then
\begin{align}
  \label{eq:1}
\sup_{\o\in \O^*}&\|\,\bbP^t_\o-\pi\|_{I^\ell_\o}\le (c^*/q )^{\ell} e^{-t\,
  (\gap(\cL)\wedge m)}.\\
\label{eq:2} \sup_{\o\in \O^*}&\bbP_\o\left(\o(t) \text{ does not satisfy SSC in $I^\ell_\o$}\right)
\le \ell (c^*/q )^{\ell} e^{-t\,
  (\gap(\cL)\wedge m)} +\ell^{-9}.\\
\label{eq:3}
\sup_{\o\in \O^*}&\bbP_\o\left(\o(t) \text{ does not satisfy WSC in
                   $I^\ell_\o$}\right)\le
(c^*/q )^{\ell} e^{-t\,
  (\gap(\cL)\wedge m)} + \ell p^{\d \ell^{\e/2}}.
\end{align}
\end{corollary}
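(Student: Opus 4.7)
The plan is to derive \eqref{eq:1} directly from Proposition~\ref{prop:key1} and then obtain \eqref{eq:2}--\eqref{eq:3} by combining \eqref{eq:1} with an elementary one-line estimate under the product measure $\pi$.

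For \eqref{eq:1}, I would first translate so that the front $X(\o)$ is moved to the origin; this leaves $\|\bbP^t_\o-\pi\|_{I^\ell_\o}$ unchanged, by the translation invariance of both the infinite-volume generator $\cL$ and of $\pi$. After shifting, $\o_0=0$ (since $X(\o)$ is a zero) and, by definition of the front, $\o_x=1$ for every $x\ge 1$, so the quantity $\D(\o)$ appearing in Proposition~\ref{prop:key1} (applied to the translated copy of $I^\ell_\o$) equals $\ell$. The conclusion of that proposition then reads $\ell\,(c^*/q)^\ell e^{-t(\gap(\cL)\wedge m)}$, and the polynomial prefactor $\ell$ is absorbed into the base of the exponential by enlarging the universal constant $c^*$ slightly, producing \eqref{eq:1}.

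The bounds \eqref{eq:2} and \eqref{eq:3} both follow from \eqref{eq:1} together with the elementary inequality
\[
\bbP_\o(\o(t)\in A)\le \pi(A)+\|\bbP^t_\o-\pi\|_{I^\ell_\o},
\]
valid for any event $A$ measurable with respect to $\o_{I^\ell_\o}$, applied respectively to $A=\{\o\text{ fails SSC in }I^\ell_\o\}$ and $A=\{\o\text{ fails WSC in }I^\ell_\o\}$. The total variation term is controlled by \eqref{eq:1}. For the static term $\pi(A)$, I would use a union bound over the $\ell$ possible starting positions of a run: under the product Bernoulli$(p)$ measure the probability that $I^\ell_\o$ contains a run of $1$'s of length at least $k$ is at most $\ell\, p^k$.

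Choosing $k=10\log\ell/(|\log p|\wedge 1)$ for SSC, a short two-case analysis (according to whether $|\log p|\le 1$ or not) yields $p^k\le \ell^{-10}$, hence $\pi(A)\le \ell^{-9}$, giving the second term on the r.h.s.\ of \eqref{eq:2}. Choosing $k=\d\ell^\e$ for WSC gives $\pi(A)\le \ell\, p^{\d\ell^\e}$; since $\ell^\e\ge \ell^{\e/2}$, this is in particular bounded by the slightly weaker $\ell\, p^{\d\ell^{\e/2}}$ appearing in \eqref{eq:3}. The proof is essentially routine bookkeeping once Proposition~\ref{prop:key1} is granted; no genuine obstacle arises, and the only mild subtleties are the verification that the TV distance on $I^\ell_\o$ is invariant under the shift (immediate from translation invariance of $\cL$ and $\pi$) and a clean handling of the piecewise definition of the SSC threshold across the two regimes of $p$.
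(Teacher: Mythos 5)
Your proof is correct and follows essentially the same route as the paper: apply Proposition~\ref{prop:key1} directly (noting that there are no zeros to the right of the front, so $\D(\o)=\ell$) for~\eqref{eq:1}, then combine with the elementary total-variation inequality $\bbP_\o(\o(t)\in A)\le\pi(A)+\|\bbP^t_\o-\pi\|_{I^\ell_\o}$ and a union-bound estimate on runs of $1$'s under $\pi$ for~\eqref{eq:2}--\eqref{eq:3}. You merely spell out the static $\pi$-estimates and the absorption of the polynomial prefactor $\ell$ into $c^*$, which the paper leaves implicit.
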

  \begin{proof}
By construction, $\D_{I_\o^\ell}(\o)=\ell$ for any $\o\in \O^*$. Thus the first statement follows at once
from Proposition~\ref{prop:key1}. The other two statements  follow
from the
fact that
\[
\pi\left(\{\o: \text{$\o$ does not satisfy SSC
    in $[1,\dots,\ell]$\}}\right) \le \ell^{-9}
\]
and
\begin{equation*}
\pi\left(\{\o: \text{$\o$ does not satisfy the WSC
    in $[1,\dots,\ell]$\}}\right) \le \ell p^{\d \ell^{\e/2}}.
\qedhere
\end{equation*}
  \end{proof}
\subsection{Finite speed of information propagation}\label{sec:finite-speed}
As the East process is an interacting particle system whose rates are
bounded by one, it is well known that in this case information can
only travel through the system at finite speed. A quantitative statement of
the above general fact goes as follows.
\begin{lemma} \label{finitespeed} For $x<y\in \bbZ$ and $0\le s<t$, define
 the ``linking event''  $F(x,y;s,t)$ as the event that there exists  a ordered sequence
  $s\le t_{x}<t_{x+1}<\dots <t_y<t$ or $s\le t_{y}<t_{y-1}<\dots <t_x<t$ of rings of the Poisson clocks
  associated to the corresponding sites in $[x,y]\cap \bbZ$. Then there exists a constant $v_{\rm max}$ such that, for all $|y-x|\ge v_{\rm max}(t-s)$,
\[
\bbP(F(x,y;s,t))\le e^{-|x-y|}.
\]
\end{lemma}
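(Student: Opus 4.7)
The plan is to reduce each direction of the linking event to a comparison with a single rate-$1$ Poisson process and then to apply a sharp Chernoff tail bound.

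First I would handle only the left-to-right linking event
\[
F^+:=\bigl\{\exists\, s\le t_x<t_{x+1}<\cdots<t_y<t\text{ with }t_i\text{ a ring of the clock at site }i\bigr\};
\]
the right-to-left version is symmetric and can be absorbed by a final factor-of-$2$ union bound. On $F^+$ one can build such a sequence greedily: let $T_x$ be the first ring at site $x$ after time $s$, and iteratively let $T_{i+1}$ be the first ring at site $i+1$ strictly after $T_i$; then clearly $F^+\subseteq\{T_y<t\}$. Because the Poisson clocks at distinct sites are independent, the strong Markov property together with the memoryless property of the exponential distribution gives that the increments $T_x-s,\, T_{x+1}-T_x,\,\ldots,\, T_y-T_{y-1}$ are i.i.d.\ Exponential$(1)$ random variables. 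Writing $n:=y-x+1$ and $\tau:=t-s$, the sum $T_y-s$ has a Gamma$(n,1)$ law, so
\[
\bbP(F^+)\le \bbP(T_y-s<\tau)=\bbP(N_\tau\ge n),
\]
where $N_\tau$ is Poisson with mean $\tau$.

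The second step is the standard Chernoff tail bound $\bbP(N_\tau\ge n)\le e^{-\tau}(e\tau/n)^n$, valid for $n\ge\tau$. Choosing $v_{\rm max}$ large enough --- any $v_{\rm max}\ge 2e^2$ works comfortably --- ensures that $n\ge v_{\rm max}\tau$ forces $e^{-\tau}(e\tau/n)^n\le \tfrac12 e^{-n}$. Combining this with the symmetric bound for the right-to-left event via a union bound over the two orderings yields
\[
\bbP\bigl(F(x,y;s,t)\bigr)\le e^{-n}\le e^{-|y-x|}
\]
whenever $|y-x|\ge v_{\rm max}(t-s)$, which is the desired estimate.

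The only slightly delicate point in this scheme is the greedy reduction to i.i.d.\ exponentials, which I view as the main (and essentially the only) obstacle: one must carefully invoke independence of the clocks at different sites together with memorylessness to conclude that the successive waiting times are Exponential$(1)$ and mutually independent. Once that reduction is in place, the Poisson tail bound and the choice of the constant $v_{\rm max}$ are entirely routine.
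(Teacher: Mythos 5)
Your proposal is correct and follows essentially the same route as the paper: reduce the linking event to the event that a rate-$1$ Poisson process has at least $\approx|x-y|$ arrivals in time $t-s$, then apply a Poisson tail bound. The paper dispatches this in a single sentence, while you carefully spell out the greedy construction and the use of the strong Markov property plus memorylessness to get i.i.d.\ exponential increments, and you explicitly carry out the Chernoff estimate and the two-direction union bound; all of these details are sound, and your extra care at the reduction step is exactly where the argument needs it.
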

\begin{proof}The probability of $F(x,y;s,t)$ is equal to the
  probability that a Poisson process of intensity $1$ has at least
  $|x-y|$ instances within time $t-s$.
\end{proof}
\begin{remark}
\label{rem:speed}
An important consequence of the above lemma is the following
fact. Let $0<s<t$ and let $\cF_s$ be the $\s$-algebra
generated by all the rings of the Poisson clocks and all the coin
tosses up to time $s$ in the graphical construction of the East
process. Fix $x<y<z$ and let $A,B$ be two events depending on
$\{\o_a\}_{a\le x}$ and $\{\o_a\}_{a\ge z}$ respectively. Then
\begin{gather*}
\bbP_\o\left(\{\o(t)\in A\cap B\}\cap F(y,z;s,t)^c \mid
  \cF_s\right)\\
=\bbP_\o\left(\{\o(t)\in A\}\mid \cF_s\right)
\bbP_\o\left(\{\o(t)\in B\}\cap F(y,z;s,t)^c\mid \cF_s\right).
 \end{gather*}
This is because: (i) on the event $F(y,z;s,t)^c$ the occurrence of the
event $B$ does not depend anymore on the Poisson rings and coin tosses to the
left of $y$; (ii) the occurrence of the event $A$ depends only on
the Poisson rings and coin tosses to the left of $x$ because of the
oriented character of the East process.
\end{remark}
The finite speed of information propagation, together with the results of~\cite{AD02}, implies the following rough bound on the position of the front
$X(\o(t))$ for the East process started from $\o\in \O^*$ (also see, e.g., \cite{Blondel}*{Lemma~3.2}).
\begin{lemma}
\label{linearspeed}
There exists constants $v_{\rm min}>0$ and $\gamma>0$ such that
\[
\sup_{\o\in \O^*}\bbP_\o\left(X(\o(t))\in[X(\o)+ v_{\rm min}t,X(\o)+v_{\rm max}t]\right) \ge
1-e^{-\gamma t}.
\]
\end{lemma}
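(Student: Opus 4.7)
The plan is to bound the displacement $X(\o(t))-X(\o)$ from above and below separately, using Lemma~\ref{finitespeed} directly for the upper bound and combining it with the relaxation estimates of Section~\ref{sec:prelims} for the lower bound.

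\emph{Upper bound.} Since the front moves rightward in unit steps and each such advance requires a Poisson ring at the site immediately to the right of the current front, the event $X(\o(t))-X(\o)\ge n:=\lceil v_{\max}t\rceil$ forces the occurrence of the linking event $F(X(\o),X(\o)+n;0,t)$. Lemma~\ref{finitespeed} bounds its probability by $e^{-n}\le e^{-v_{\max}t}$, which yields the desired exponential tail after setting $\gamma\le v_{\max}$.

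\emph{Lower bound.} Here I would compare the front to a biased random walk: forward jumps occur at rate exactly $q$ (a $0$-proposal at the site above the front is always legal), and backward jumps occur at rate at most $p$ (a $1$-proposal at the front requires $\o_{X_t-1}=0$). When $p<1/2$ the resulting stochastic domination below by a biased walk with drift $q-p>0$ yields the exponential tail via standard hitting-time estimates. When $p\ge 1/2$ the naive coupling has nonpositive drift and one must exploit the fact that each backward jump requires a zero immediately behind the front. The Aldous--Diaconis bound $\gap(\cL)>0$, quantitatively in the form of Proposition~\ref{prop:key1}, implies that after a bounded waiting time $T_0$ the marginal behind the front is close in total variation to the product measure $\pi$, uniformly in $\o\in\O^*$. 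Since $\pi(\o_{-1}=0)=q$ gives an effective drift of at least $q-pq=q^2>0$ in this regime, one obtains a strictly positive effective drift over $[0,T_0]$ uniformly in the starting configuration. Partitioning $[0,t]$ into $N=\lfloor t/T_0\rfloor$ sub-intervals, applying the strong Markov property at the times $kT_0$, and invoking a Chernoff bound on the number of ``bad'' sub-intervals (where the front fails to advance as expected), combined with Lemma~\ref{finitespeed} used to cap the backward displacement within each sub-interval, then yields the claimed exponential tail.

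\emph{Main obstacle.} The $p\ge 1/2$ regime is where the work lies: two intertwined issues appear, namely securing the uniformity in $\o\in\O^*$ of the lower bound on the forward drift over a fixed window $T_0$, and controlling the worst-case backward displacement in the sub-intervals in which the restart fails. The former is addressed by Proposition~\ref{prop:key1}, whose rate of convergence depends only on the local spacing of zeros near the front rather than on the full initial configuration; the latter by Lemma~\ref{finitespeed}, which guarantees that within a window of length $T_0$ the front cannot be influenced by Poisson rings more than $v_{\max}T_0$ to its left, except on an exponentially rare event.
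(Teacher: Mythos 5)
Your upper bound via the linking event and Lemma~\ref{finitespeed}, and your $p<1/2$ lower bound via the biased-walk domination $X(\o(t))\ge\xi(t)$ (the same coupling the paper uses inside the proof of Lemma~\ref{infinitesupport}), are fine. The $p\ge 1/2$ branch, however, has a genuine gap. You assert that Proposition~\ref{prop:key1} yields, after a bounded $T_0$, that the marginal of the spin immediately behind the \emph{moving} front $X(\o(T_0))$ is close to $\pi$ uniformly in $\o\in\O^*$, hence an effective drift $q-pq=q^2>0$. Neither half of this is right. Proposition~\ref{prop:key1} (and Corollary~\ref{cor:spacing}) control the law of $\o(t)$ on an interval fixed in the original frame, e.g.\ $I^\ell_\o=[X(\o),X(\o)+\ell-1]$ anchored at the \emph{initial} front; they say nothing directly about the random frame anchored at $X(\o(t))$. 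Converting between the two frames is precisely what Proposition~\ref{prop:decor} and Theorem~\ref{coupling} accomplish, and those are proved later and rely on Lemma~\ref{linearspeed}, so invoking them here would be circular. Worse, the claim is false even in the limit: the law seen from the front converges to $\nu$, not $\pi$, and the paper notes that $q^*:=\nu(\o_{-1}=0)$ satisfies $q<q^*<q/p$, so the true asymptotic drift is $v=q-pq^*<q^2$. Your conclusion survives (it is still positive), but the justification does not.

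The per-block estimate you need can be obtained without any statement about the law behind the front, by staying in the fixed frame. Apply Proposition~\ref{prop:key1} (equivalently \eqref{eq:1} of Corollary~\ref{cor:spacing}) to the window $\L=[X(\o)+1,X(\o)+\ell]$, on which $\o$ is identically $1$ so that $\D(\o)=\ell$. After time $T_0=c_1\ell$ with $c_1$ large enough (depending only on $p$), $\|\bbP^{T_0}_\o-\pi\|_\L$ is exponentially small in $\ell$; since under $\pi$ there is a zero in $[X(\o)+\ell/2,X(\o)+\ell]$ except with probability $p^{\ell/2}$, one gets $\bbP_\o\bigl(X(\o(T_0))\ge X(\o)+\ell/2\bigr)\ge 1-\e(\ell)$ with $\e(\ell)\to 0$ as $\ell\to\infty$, uniformly in $\o\in\O^*$. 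Choosing $\ell$ large enough that $\e(\ell)\bigl(1+2v_{\rm max}c_1\bigr)<1$ makes the expected per-block displacement strictly positive, and your block decomposition with the strong Markov property at times $kT_0$, a Chernoff bound on the number of bad blocks, and Lemma~\ref{finitespeed} to cap the backward displacement in a bad block then goes through as you outlined. In short: the scaffolding (blocks, strong Markov, Chernoff, finite speed) is correct; it is the source of the per-block drift that must be a fixed-frame bulk relaxation estimate, not a claim about the marginal behind the moving front.
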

\begin{remark}\label{speedbnd} When $p\downarrow 0$ one can obtain the above statement with $v_{\rm min}\to 1$ and $\gamma$ uniformly bounded away from $0$ by using our Proposition~\ref{prop:key1} instead of \cite{Blondel}*{Proposition~4.3} in the proof of \cite{Blondel}*{Lemma~3.2}.
\end{remark}
The second consequence of the finite speed of information propagation
is a kind of mixing result behind the front $X(\o(t))$ for the process started from
$\o\in \O^*$.  We first need few additional notation.
\begin{definition}
\label{shift}For any $a\in \bbZ$, we define the
\emph{shifted} configuration $\vartheta_a \o$ by
\[
\vartheta_a \o_x= \o_{x+a},\ \forall x\in \bbZ.
\]
\end{definition}
\begin{proposition}
\label{prop:decor}
Let $\L\subset (-\infty, -\ell]\cap \bbZ$ and let
$B\subset \{0,1\}^{\L}$.
Assume $\ell \ge 2v_{\rm max}(t-s)$. Then for any $\o\in \O^*$ and any
$a\in \bbZ$ the following holds:
\begin{gather*}
\Big|\,\bbE_\o\left[{\mathds 1}_{\{\vartheta_{X(\o(s))} [\o(t)]\in B\}}{\mathds
    1}_{\{X(\o(t))=a\}}\mid \cF_s\right]-\bbE_\o\left[{\mathds 1}_{\{\vartheta_{X(\o(s))} [\o(t)]\in B\}}\mid \cF_s\right]\bbE_\o\left[{\mathds 1}_{\{X(\o(t))=a\}}\mid \cF_s\right]\,\Big|\\= O(e^{-\ell}).
\end{gather*}
\end{proposition}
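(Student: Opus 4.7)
The plan is to apply Remark~\ref{rem:speed} to the pair of events
\[ A := \{\vartheta_{X(\o(s))}[\o(t)] \in B\}, \qquad C := \{X(\o(t)) = a\}. \]
Given $\cF_s$, the integer $y := X(\o(s))$ is known; since $\L \subset (-\infty, -\ell]$, event $A$ depends only on $\{\o_w(t)\}_{w \le y - \ell}$, and by the oriented character of the East process this is in turn measurable with respect to the Poisson rings and coin tosses at sites $\le y - \ell$ during $(s, t]$. Event $C$ depends on $\{\o_w(t)\}_{w \ge a}$.

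The argument splits on $a$. If $a \le y - \ell/2$, then $C$ forces a leftward displacement of the front by at least $\ell/2 \ge v_{\rm max}(t-s)$ (using the hypothesis $\ell \ge 2 v_{\rm max}(t-s)$). By Lemma~\ref{linearspeed} applied to $\o(s)$ over the interval $(s, t]$, the conditional probability of such a displacement is $O(e^{-\gamma(t-s)}) = O(e^{-c\ell})$ for some $c = c(p) > 0$; both sides of the desired inequality are then $O(e^{-\ell})$ and the bound holds trivially.

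If instead $a > y - \ell/2$, so that the support of $C$ lies in sites $\ge a \ge z := y - \ell/2 + 1$, apply Remark~\ref{rem:speed} with left endpoint $x := y - \ell$, intermediate point $y_0 := y - \ell + 1$ (satisfying $x < y_0 < z$), and right endpoint $z$, to obtain
\[ \bbP_\o(A \cap C \cap F^c \mid \cF_s) = \bbP_\o(A \mid \cF_s)\,\bbP_\o(C \cap F^c \mid \cF_s), \]
where $F := F(y_0, z; s, t)$. Adding and subtracting $\bbP_\o(A \mid \cF_s)\,\bbP_\o(C \cap F \mid \cF_s)$ then yields
\[ \bigl|\bbP_\o(A \cap C \mid \cF_s) - \bbP_\o(A \mid \cF_s)\,\bbP_\o(C \mid \cF_s)\bigr| \le \bbP(F \mid \cF_s). \]
Since $|z - y_0| = \ell/2 \ge v_{\rm max}(t-s)$, Lemma~\ref{finitespeed} bounds $\bbP(F)$ by $e^{-\ell/2}$, which is $O(e^{-\ell})$ up to absorbing the factor $1/2$ into the implicit constant.

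The main obstacle is matching the three competing length scales: the distance $\ell$ at which $\L$ is supported, the maximal front displacement $v_{\rm max}(t-s)$ during $(s,t]$, and the linking-interval length required by Lemma~\ref{finitespeed} for an exponentially small bound. The hypothesis $\ell \ge 2 v_{\rm max}(t-s)$ is precisely what accommodates all three: it carves out a middle strip of length $\Theta(\ell)$ between the supports of $A$ and $C$ (in the typical case $a > y - \ell/2$) through which neither East-side information from $A$'s support nor leftward excursions of the front from $y$ can propagate, so that Remark~\ref{rem:speed} and Lemma~\ref{linearspeed} jointly cover every value of $a$.
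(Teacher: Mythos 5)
Your Case~2 (for $a > X(\o(s))-\ell/2$) is correct and is essentially the paper's argument: factor via Remark~\ref{rem:speed} on the event that the middle-strip linking event $F$ fails, then bound $\bbP(F\mid\cF_s)$ with Lemma~\ref{finitespeed}. The genuine gap is in Case~1, $a\le X(\o(s))-\ell/2$, where you bound $\bbP_\o\bigl(X(\o(t))=a\mid\cF_s\bigr)$ via Lemma~\ref{linearspeed} by $O(e^{-\gamma(t-s)})$ and then claim this is $O(e^{-c\ell})$. That identification fails: the hypothesis $\ell\ge 2v_{\rm max}(t-s)$ only gives $t-s\le \ell/(2v_{\rm max})$, so $e^{-\gamma(t-s)}\ge e^{-\gamma\ell/(2v_{\rm max})}$ -- the inequality runs the wrong way. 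Concretely, take $t-s=1$ and $\ell$ arbitrarily large (allowed by the hypothesis): your bound $e^{-\gamma(t-s)}$ is a fixed constant while $e^{-c\ell}$ is as small as one likes, so the step does not go through.

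What Case~1 needs is Lemma~\ref{finitespeed}, not Lemma~\ref{linearspeed}. The event $\{X(\o(t))=a\}$ with $a\le X(\o(s))-\ell/2$ \emph{deterministically} implies the linking event $F\bigl(X(\o(s))-\ell/2,\,X(\o(s));\,s,t\bigr)$: since $\o_{X(\o(s))}(s)=0$ while $\o_w(t)=1$ for every $w\in(a,X(\o(s))]$, taking for each such $w$ the last ring in $(s,t]$ flipping $\o_w$ to $1$ (which must be legal, hence its left neighbour is $0$ just before) yields an ordered sequence of rings $t^*_{X(\o(s))}<t^*_{X(\o(s))-1}<\cdots$ across the whole range, in particular across $[X(\o(s))-\ell/2,\,X(\o(s))]$. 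Since $\ell/2\ge v_{\rm max}(t-s)$, Lemma~\ref{finitespeed} gives $\bbP(F\mid\cF_s)\le e^{-\ell/2}$, which is exactly the paper's observation that ${\mathds 1}_{\{X(\o(t))=a\}}{\mathds 1}_{B_2^c}=0$ for such $a$. (Incidentally, what both you and the paper actually obtain is $O(e^{-\ell/2})$, not $O(e^{-\ell})$ -- the factor $1/2$ sits in the exponent and cannot be absorbed into the implied constant of $O(\cdot)$ -- but only the existence of some exponential rate in $\ell$ is used downstream, so this is harmless.)
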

To see what the proposition roughly tells we first assume that the front at time $s$ is at $0$. Then the above result says
that at a later time $t$  any event supported on $(-\infty, -\ell]$ is almost independent of the location of the front.
\begin{proof}
Recall the definition of the event $F(x,y;s,t)$ from Lemma~\ref{finitespeed} and let
\begin{align*}
B_1 &:=F\left(X(\o(s))-\ell,X(\o(s))-\ell/2-1;s,t\right)\\
B_2&:= F\left(X(\o(s))-\ell/2,X(\o(s));s,t\right).
\end{align*}
We now write
\begin{align*}
{\mathds 1}_{\{\vartheta_{X(\o(s))} [\o(t)]_\L\in B\}}{\mathds
    1}_{\{X(\o(t))=a\}}
&={\mathds 1}_{\{\vartheta_{X(\o(s))} [\o(t)]_\L\in B\}}{\mathds
    1}_{\{X(\o(t))=a\}}{\mathds 1}_{\{B_1^c\}}{\mathds 1}_{\{B_2^c\}} \\
&+{\mathds 1}_{\{\vartheta_{X(\o(s))} [\o(t)]_\L\in B\}}{\mathds
    1}_{\{X(\o(t))=a\}}\left[1-{\mathds 1}_{\{B_1^c\}}{\mathds 1}_{\{B_2^c\}}\right].
\end{align*}
We first note that given $\cF_s$ for any $a< X(\o(s))-\ell/2$,
\begin{align*} {\mathds
    1}_{\{X(\o(t))=a\}}{\mathds
    1}_{\{B_2^c\}}=0,
\end{align*}
and hence
\begin{align*} \bbE_\o\left[{\mathds
    1}_{\{X(\o(t))=a\}}{\mathds
    1}_{\{B_2^c\}}\mid \cF_s\right]=0.
\end{align*}
Thus, we may assume that $a\ge X(\o(s))-\ell/2$.
Now
\begin{align*}
\bbE_\o&\left[{\mathds 1}_{\{\vartheta_{X(\o(s))} [\o(t)]_\L\in B\}}{\mathds
    1}_{\{X(\o(t))=a\}}{\mathds 1}_{\{B_1^c\}}{\mathds
    1}_{\{B_2^c\}}\mid \cF_s\right]\\
&=\bbE_\o\left[{\mathds 1}_{\{\vartheta_{X(\o(s))} [\o(t)]_\L\in B\}}{\mathds
    1}_{\{B_1^c\}}\mid \cF_s\right]
\bbE_\o\left[{\mathds
    1}_{\{X(\o(t))=a\}}{\mathds
    1}_{\{B_2^c\}}\mid \cF_s\right]
 \end{align*}
because under the assumption that $a\ge X(\o(s))-\ell/2$,  the two events are functions of an independent set of
variables in the graphical construction (cf.\ Remark~\ref{rem:speed}).
By Lemma~\ref{finitespeed} we know that $\bbP(B^c_i\mid \cF_s)\le
e^{-\ell},\ i=1,2$ and the proof is complete.
\end{proof}

\section{The law behind the front of the East process}\label{sec:front}

Our main result in this section is a quantitative estimate on the rate of
convergence as $t\to \infty$ of the law $\mu_\o^t$ of the process seen from  the front
to its invariant measure $\nu$.
Consider the process $\{\o^\Front(t)\}_{t\ge 0}$ seen from the front (recalling \S\ref{sec:proc-front}) and let $\mu_\o^t$ be its law at time $t$ when the starting configuration
is $\o$.

\begin{theorem}
\label{coupling}
For any $p\in (0,1)$ there exist $\a\in (0,1)$ and $v^*>0$ such that
\[
\sup_{\o\in \O_\Front}\|\,\mu_\o^t -\nu\|_{[-v^*t,\,0]}=O(e^{-t^\a}).
\]
Moreover, $\a$ and $v^*$ can be chosen uniformly as $p\to 0$.
\end{theorem}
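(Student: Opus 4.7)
The plan is to prove the estimate via a coupling argument. Let $\eta$ be distributed according to $\nu$ (so $\eta\in\O_\Front$), and let $\o(t),\eta(t)$ be driven by the same Poisson clocks and Bernoulli coins in the graphical construction. Since $\nu$ is invariant for the process seen from the front, $\vartheta_{X(\eta(t))}\eta(t)\sim\nu$ for every $t\ge 0$, so by the coupling inequality it suffices to show that the two configurations, viewed from their respective fronts, agree on $[-v^*t,0]$ at time $t$ with probability at least $1-O(e^{-t^\a})$.

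I would implement this through a two-scale decomposition $[0,t]=[0,\t]\cup[\t,t]$ with $\t:=t-t^{1-\a}$ for a small $\a\in(0,1)$ to be tuned. During the burn-in phase $[0,\t]$, Lemma~\ref{linearspeed} localises both fronts inside $[v_{\rm min}\t,v_{\rm max}\t]$ with error $e^{-\g\t}$, and Corollary~\ref{cor:spacing} shows that each configuration at time $\t$ satisfies the Weak Spacing Condition in any window of polynomial size behind its front with overwhelming probability. Feeding this into Proposition~\ref{prop:key1} yields that the marginal of $\o(\t)$ (resp.\ $\eta(\t)$) on a WSC-regular window sitting at distance $\ell\approx\t^{\k}$ behind its front is within total variation $O(e^{-\t^{\a'}})$ of $\pi$, and Theorem~\ref{Blondel1}(i), which gives $\|\nu-\pi\|_{(-\infty,-\ell]}\le e^{-c\ell}$, then upgrades this to a comparison with $\nu$ at a comparable stretched-exponential cost. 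In particular, the shifted marginals of $\o(\t)$ and $\eta(\t)$ are mutually $O(e^{-\t^{\a'}})$-close on that window.

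During the coupling phase $[\t,t]$ I would perform a maximal coupling of these two marginals at time $\t$, realised inside a strip of width $v_{\rm max}(t-\t)+v^*t$ behind each front, which succeeds with probability $1-O(e^{-\t^{\a'}})$. On the success event the two configurations coincide on that strip and share the same front position, so the oriented character of the East process (Remark~\ref{rem:speed}) together with the finite speed of information propagation (Lemma~\ref{finitespeed}) guarantees that the agreement is preserved and tracks the common front rightward throughout $[\t,t]$, except on the event of probability $O(e^{-(t-\t)})=O(e^{-t^{1-\a}})$ that information from the uncoupled left region propagates across the coupled strip within time $t-\t$. Choosing $v^*<v_{\rm min}$ and balancing the two stretched-exponential exponents delivers the advertised rate.

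The main obstacle lies in the placement of the coupling window at time $\t$: it must be deep enough behind the front that Proposition~\ref{prop:key1} (which naturally compares to $\pi$, not $\nu$, and the two genuinely differ by $\Theta(1)$ in an $O(1)$-neighborhood of the front) combined with the Blondel estimate gives a good comparison to $\nu$, yet close enough that the agreement can be propagated back up to the new front at time $t$ via the oriented dynamics in the available time $t-\t=t^{1-\a}$. The two-scale choice achieves this balance, and the required uniformity of $\a$ and $v^*$ as $p\downarrow 0$ is inherited from the uniformity of the constants in Proposition~\ref{prop:key1} (noted in the remark after Lemma~\ref{infinitesupport}), the fact that $\gap(\cL)\to 1$, and from $v_{\rm min}\to 1$ in Remark~\ref{speedbnd}.
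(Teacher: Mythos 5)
Your opening reduction (couple a copy of the process started from $\o$ with one started from $\eta\sim\nu$, then use the invariance of $\nu$) is exactly the paper's first step, and your appeal to Lemma~\ref{linearspeed}, Corollary~\ref{cor:spacing}, Proposition~\ref{prop:key1} and Theorem~\ref{Blondel1}(i) to control both marginals on a window sitting at distance $\ell$ behind the front at time $\t$ is in the spirit of the paper's Theorem~\ref{th:key3}. The divergence is structural: you propose a \emph{single} two-phase coupling, whereas the paper's proof constructs an \emph{iterated} coupling over $N\approx \e t/\D$ rounds. That difference is not cosmetic, and your argument has a genuine gap precisely where the iteration is needed.

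The gap is in the claim that after the maximal coupling at time $\t$, ``the agreement is preserved and tracks the common front rightward throughout $[\t,t]$.'' What the burn-in phase can give you is agreement of the shifted marginals on a window $[-A,-\ell]$ strictly \emph{behind} the front; it cannot give you agreement on $(-\ell,0]$, because there the law $\nu$ genuinely differs from $\pi$ by $\Theta(1)$ and Proposition~\ref{prop:key1} says nothing. After time $\t$ you therefore have two recentred configurations agreeing far behind the origin but disagreeing near it, and you must still couple the $(-\ell,0]$ piece. Running the basic coupling does \emph{not} automatically do this. First, in the recentred process the shift part of the generator $\cL^\Shift$ acts at rate $p$ only when $\o_{-1}=0$; if the two configurations disagree at site $-1$, a single ring at the origin with coin $1$ shifts one and not the other, so the recentred configurations drift out of register rather than merging. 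Second, even setting shifts aside, the ``coupling front'' (the rightmost site up to which the two configurations agree, protected from corruption on the left) advances by the same East mechanism that drives the actual front, i.e.\ at speed roughly $v$, so it has no reason to close the $O(\ell)$ gap to the front in time $t-\t$; your own time budget $t-\t=t^{1-\a}$ versus $\ell\approx t^{\k}$ does not resolve this, because the hitting-time heuristic gives a coupling time of order $(t-\t)+\ell/v$, which exceeds the available $t-\t$.

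This is exactly the difficulty the paper's construction is engineered to overcome: in each round, after coupling the far-behind window, it locates a common zero $x_*$ within distance $O(\log t)$ of the right end of the coupled window, and then (item (b)(ii) of the coupling definition) samples a Bernoulli $\xi$ with $\mathrm{Prob}(\xi=1)=e^{-2\D_2}$ whose success corresponds to the Poisson clocks at $x_*$ and at the origin \emph{both} being silent for the mixing portion of the round. On that event the shift is frozen, the finite East process on $[x_*+1,-1]$ (of length $O(\log t)$) has the same boundary condition for both copies, and a maximal coupling over time $\D_2=\kappa\e\log t$ succeeds up to $O(t^{-2\e})$ by Proposition~\ref{prop:key1}. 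Because $\xi=1$ only with probability $e^{-2\D_2}=t^{-2\kappa\e}$, each round couples the near-front region only with this small probability, which is why one must iterate $N\approx\e t/\D$ rounds to accumulate $\left(1-e^{-2\D_2}/2\right)^{N}= O(e^{-t^{\a}})$. A one-shot scheme cannot reproduce this: there is no single time step at which the near-front region couples with probability $1-O(e^{-t^\a})$. Your proposal would need to replace the vague ``tracks the front rightward'' step with an argument of comparable strength (freezing, or a recurrence argument for the lag between the coupling front and the actual front), and once you do that you will essentially be forced into the paper's iterative scheme.

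Two smaller points: you write that on the success event the two configurations ``share the same front position,'' but a maximal coupling of shifted marginals on $[-A,-\ell]$ says nothing about the absolute front positions; fortunately this is a non-issue once you phrase the whole argument in terms of the recentred process $\o^\Front(t)$, which is what the paper does, so the absolute front never enters. Finally, your remarks on uniformity as $p\downarrow 0$ (via the remark after Lemma~\ref{infinitesupport}, $\gap(\cL)\to 1$, and $v_{\rm min}\to 1$ in Remark~\ref{speedbnd}) are correct and are exactly the observations the paper uses.
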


  A corollary of this result --- which will be key in the proof of Theorem~\ref{th:main1} --- is to show that, for
any $\o\in \O_\Front$, the increments in the position of the front
(the variables $\xi_n$ below) behave asymptotically as a
stationary sequence of weakly dependent random variables with
exponential moments.

Fix $\D\footnote{In the sequel we will always use the letter $\D$ to denote a time lag. Its value will depend on the context and will be specified in advance. }>0$ and let $t_n=n\D$ for $ n\in \bbN$. Define
\[
\xi_n:= X(\o(t_n))-X(\o(t_{n-1})),
\]
so that
\begin{equation}
  \label{eq:19}
X(\o(t))=\sum_{n=1}^{N_t} \xi_n + \left[X(\o(t))-X(\o(t_N))\right], \quad N=
\lfloor t/\D\rfloor.
\end{equation}
%In what follows we will denote by $\bbE_\nu[\cdot], \var_\nu(\cdot)$ the
%expectation and variance respectively w.r.t. the stationary process
%started from $\O_\Front$ with initial law $\nu$.
Recall also that
$\a,v^*$ are the constants appearing in Theorem~\ref{coupling}.
\begin{corollary}
\label{cor:wf}\ Let $f:\bbR\mapsto [0,\infty)$ be such that $e^{-|x|}f^2(x)\in L^1(\bbR)$. Then
  \begin{equation}
    \label{eq:20}
C_f\equiv \sup_{\o\in
  \O_\Front}\bbE_\o\left[f(\xi_1)^2\right]<\infty.
  \end{equation}
Moreover, there exists a constant $\g>0$ such that
\begin{align}
\label{eq:20tris}
\sup_{\o\in \O_\Front}|\bbE_\o\left[f(\xi_n)\right]- \bbE_\nu\left[f(\xi_1)\right]|=
O(e^{-\g n^\a}) \quad \forall n\ge 1,
 \end{align}
\begin{gather}
\label{eq:13}
\sup_{\o\in \O_\Front}|{\cov}_\o\left(\xi_j,\xi_n \right)-{\cov}_\nu\left(\xi_1,\xi_{n-j}\right)|= O(e^{- \g j^\a})\wedge
O(e^{-\g(n-j)^\a})\quad \forall j<n
\end{gather}
and
\begin{gather}
  \label{eq:13fabio}
  \sup_{\o\in \O_\Front}|{\cov}_\o\left(f(\xi_j),f(\xi_n)
  \right)|=O(e^{-\gamma (n-j)^\a}), \quad \ \forall j\le n,
\end{gather}
where the constants in the r.h.s.\ of \eqref{eq:20tris} and
\eqref{eq:13fabio} depend on $f$ only through the constant $C_f$.
Finally, for any $k,n \in \bbN$ such that $v^*k>n v_{\rm max}$ and for any bounded $F:\bbR^n\mapsto \bbR$ ,
  \begin{equation}
    \label{eq:21}
\sup_{\o\in \O_\Front}\Big|
\bbE_\o\left[F\Bigl(\xi_{k},\xi_{k+1},\dots,\xi_{k+n-1}\Bigr)\right] -
\bbE_\nu\left[F\Bigl(\xi_{1},\xi_{2},\dots,\xi_{n}\Bigr)\right]\Big|=
O\left(e^{-\g t_k^\a}\right).
  \end{equation}
%\label{lem:wd}
\end{corollary}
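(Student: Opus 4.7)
The overall plan is to prove \eqref{eq:20} first as a self-contained moment estimate, and to use it as the basic input for the remaining four bounds: \eqref{eq:20tris} and \eqref{eq:21} will follow from Theorem~\ref{coupling} combined with the finite speed of information propagation (Lemma~\ref{finitespeed}), while \eqref{eq:13fabio} and \eqref{eq:13} will follow from the Markov property applied at an intermediate time together with \eqref{eq:20tris}. For \eqref{eq:20} itself, the key observation is that the front moves only by $\pm 1$, and only at rings at $X(\o(t))+1$ (a zero-update, rate $q$) or at $X(\o(t))$ (a one-update, rate at most $p$ since it requires $\o_{X-1}=0$); the total rate of front-changing rings is thus bounded by $q+p=1$, so $|\xi_1|$ is stochastically dominated by a Poisson($\Delta$) variable. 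This yields $\sup_{\o\in\O_\Front}\bbP_\o(|\xi_1|>k)\le Ce^{-ck}$ with $c$ as large as we please, and combining this tail with the hypothesis $e^{-|x|}f^2(x)\in L^1(\bbR)$ gives \eqref{eq:20}.

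For \eqref{eq:20tris} I would invoke Theorem~\ref{coupling} to construct a coupling of the East process started from $\o\in\O_\Front$ with one started from $\nu$, under which the configurations-seen-from-the-front coincide on $[-v^*t_{n-1},0]$ at time $t_{n-1}$ off an event of probability $O(e^{-t_{n-1}^\alpha})$. Lemma~\ref{finitespeed} then ensures that during the subsequent time lag $\Delta$ the two fronts are not affected by the uncoupled region outside a further event of probability $O(e^{-c\Delta})$, so on the good event $\xi_n$ equals the corresponding displacement $\tilde\xi_n$ of the stationary process. On the exceptional event, Cauchy--Schwarz together with the moment bound \eqref{eq:20} controls $|f(\xi_n)|+|f(\tilde\xi_n)|$, producing an overall error of order $e^{-\gamma n^\alpha}$ (possibly after halving $\alpha$), and stationarity of $\nu$ identifies $\bbE_\nu[f(\xi_n)]$ with $\bbE_\nu[f(\xi_1)]$. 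The proof of \eqref{eq:21} follows the same template at time $t_{k-1}$: the hypothesis $v^*k>n v_{\max}$ guarantees that the coupled window $[-v^*t_{k-1},0]$ is wide enough to contain, via finite speed of propagation over the larger time lag $n\Delta$, the entire space-time region on which $(\xi_k,\ldots,\xi_{k+n-1})$ depend, and boundedness of $F$ removes the need for moment truncation.

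For \eqref{eq:13fabio} and \eqref{eq:13} I would condition on $\cF_{t_j}$ and apply the Markov property, together with translation invariance of the front increments, to obtain $\bbE_\o[f(\xi_n)\mid\cF_{t_j}]=\bbE_{\o^\Front(t_j)}[f(\xi_{n-j})]$. By the already-proved \eqref{eq:20tris}, this conditional expectation lies within $O(e^{-\gamma(n-j)^\alpha})$ of the stationary value $\bbE_\nu[f(\xi_1)]$, uniformly in $\o^\Front(t_j)$. Substituting into $\cov_\o(f(\xi_j),f(\xi_n))=\bbE_\o\bigl[f(\xi_j)\,\bbE_\o[f(\xi_n)\mid\cF_{t_j}]\bigr]-\bbE_\o[f(\xi_j)]\bbE_\o[f(\xi_n)]$ and using Cauchy--Schwarz with \eqref{eq:20} to absorb the factor $f(\xi_j)$ yields \eqref{eq:13fabio}. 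Running the same procedure with $f(x)=x$ produces the bound $O(e^{-\gamma(n-j)^\alpha})$ in \eqref{eq:13}, while the complementary bound $O(e^{-\gamma j^\alpha})$ comes from conditioning at $t_{j-1}$ instead and rerunning the coupling argument of the previous paragraph to compare the resulting functional expectation under $\mu_\o^{t_{j-1}}$ with that under $\nu$. The main technical obstacle throughout is that Theorem~\ref{coupling} controls only total variation on a local window, whereas $f(\xi)$ and $\xi$ itself are unbounded; I would overcome this by a standard truncation scheme, decomposing $f(\xi)=(f(\xi)\wedge K)+(f(\xi)-K)_+$ with $K$ growing polynomially in $n$, bounding the first piece via Theorem~\ref{coupling} after using finite speed to verify that the relevant functional is essentially local to the coupled window, and controlling the tail by Cauchy--Schwarz against \eqref{eq:20}.
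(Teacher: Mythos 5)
Your plan mirrors the paper's: establish the moment bound~\eqref{eq:20} from finite speed of propagation, then deduce the remaining estimates from the Markov property at an intermediate time, Theorem~\ref{coupling} applied on a spatial window, Lemma~\ref{finitespeed} to extend from the window, and Cauchy--Schwarz against~\eqref{eq:20} to absorb unboundedness. Your Poisson-domination argument for~\eqref{eq:20} is a clean equivalent of the paper's linking-event bound (which uses that $|\xi_1|\ge n$ forces $F(0,n;0,\Delta)$), and your handling of~\eqref{eq:13fabio}, \eqref{eq:13}, \eqref{eq:21} --- conditioning at $t_j$ to get decay in $n-j$, at $t_{j-1}$ to get decay in $j$, using the wider window under $v^*k>nv_{\rm max}$ for~\eqref{eq:21} --- tracks the paper's proof closely.

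One step would fail as written. In~\eqref{eq:20tris} you claim that, after coupling the two processes on $[-v^*t_{n-1},0]$ at time $t_{n-1}$, Lemma~\ref{finitespeed} bounds by $O(e^{-c\Delta})$ the probability that the uncoupled region influences the subsequent front increment. Since $\Delta$ is a \emph{fixed} time lag, $e^{-c\Delta}$ is a constant that does not decay with $n$, so your estimate would not produce the claimed $O(e^{-\gamma n^\alpha})$. What Lemma~\ref{finitespeed} actually gives is decay in the \emph{distance}: the relevant linking event $F(-v^*t_{n-1},0;t_{n-1},t_n)$ has probability at most $e^{-v^*t_{n-1}}$ once $v^*t_{n-1}\ge v_{\rm max}\Delta$, which decays geometrically in $n$ and is in fact dominated by the $O(e^{-t_{n-1}^\alpha})$ coupling error from Theorem~\ref{coupling}. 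The paper implements this step via a surgery: it replaces $\o'$ by $\Phi_{t_{n-1}}(\o')$, defined to agree with $\o'$ on $[-v^*t_{n-1},0]$ and to be identically $1$ outside, so that $\bbE_{\Phi_{t_{n-1}}(\o')}[f(\xi_1)]$ is a function of $\o'$ restricted to the coupled window alone, and it controls $|\bbE_{\o'}[f(\xi_1)]-\bbE_{\Phi_{t_{n-1}}(\o')}[f(\xi_1)]|$ by Cauchy--Schwarz against $\bbP\bigl(F(-v^*t_{n-1},0;0,\Delta)\bigr)$. With your propagation error corrected to depend on the distance $v^*t_{n-1}$ rather than on the time lag $\Delta$, your proposal is sound and is in substance the paper's argument.
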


To prove Theorem~\ref{coupling} we will require a technical result, Theorem~\ref{th:key3} below, which can informally be summarized as follows:
\begin{itemize}
  \item Starting from $\o\in \O^*$, at any fixed large time $t$, with high probability the configuration satisfies
WSC apart from an interval behind the front $X(\o(t))$ of length proportional to
$t^\eps$ .
\item If the above property is true at time $t$, then at a later
time $t'=t+ \text{const}\times t^\eps$ the law of the process will be very
close to $\pi$ apart from a small interval behind the front
where the strong spacing property will occur with high probability.
\end{itemize}
Formally, fix a constant $\kappa$ to be chosen later on and $t>0$. Let
$\ell\equiv t^{\eps}$, where $\eps$ appears in the WSC and let $t_\ell=t-\kappa\ell/v_{\rm
  min}$. Let $\cS_\ell$ denotes the set of
configurations  which fail to satisfy SSC
in the interval $[-3(v_{\rm max}/v_{\rm min})\kappa\,\ell,- \kappa \log \ell)\cap \bbZ$ and let
$\cW_{\ell,t}$ be those
configurations which fail to satisfy WSC
in the interval $[-v_{\rm min}t,- \ell)\cap \bbZ$.
\begin{theorem}
\label{th:key3}
It is possible to choose $\d$ small enough and $\kappa$ large enough depending only on $p$ in such a way
that for all $t$ large enough the following
holds:
\begin{align}
\label{1} &\sup_{\o\in \O_\Front}\m^t_\o\left(\cW_{\ell,t}\right) = O(e^{-t^{\eps/2}}),\\
\label{2}
&\sup_{\o\in \O_\Front}\m^t_\o\left(\cS_\ell \mid \cF_{t_\ell}\right)= O(t^{-7\eps}) +
{\mathds 1}_{\cW_{\ell,t}}(\o(t_\ell)),\\
\label{3} &\sup_{\o\in \O_\Front}\|\mu_\o^t(\cdot \mid
\cF_{t_\ell})-\pi\|_{[-v_{\rm min}t,-3(v_{\rm max}/v_{\rm min})\kappa\ell]}= O(e^{-t^{\eps/2}}) +
{\mathds 1}_{\cW_{\ell,t}}(\o(t_\ell)).
  \end{align}
  Moreover, $\kappa$ stays bounded as $p\downarrow 0.$
\end{theorem}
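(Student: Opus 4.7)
The plan is to prove \eqref{1}, \eqref{3}, \eqref{2} in that order, combining Corollary~\ref{cor:spacing} and Proposition~\ref{prop:key1} for local relaxation to $\pi$, the linear-speed estimate on the front (Lemma~\ref{linearspeed}), and the Markov property applied at well-chosen intermediate times.

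For \eqref{1} I would use a spatial covering. Place a deterministic grid $s_j := j\ell/(2v_{\rm max})$ for $0 \le j \le J = O(t/\ell)$. By Lemma~\ref{linearspeed} and a union bound, on an event of probability $1 - O(Je^{-\g\ell})$ one has $X(\o(s_j)) \in [v_{\rm min}s_j, v_{\rm max}s_j]$ for every $j$, so that consecutive fronts differ by at most $\ell/2$ and the length-$\ell$ blocks $B_j := [X(\o(s_j)), X(\o(s_j)) + \ell - 1]$ overlap and cover the target region $[X(\o(t)) - v_{\rm min}t, X(\o(t)) - \ell)$. For each $j$ with $s_j \le t - \kappa\ell/v_{\rm min}$, the strong Markov property at $s_j$ and Corollary~\ref{cor:spacing}(3) applied to the process restarted from $\o(s_j)$ give
\[
\bbP_\o\bigl(\text{WSC fails in }B_j = I^\ell_{\o(s_j)}\text{ at time }t\bigr) \le (c^*/q)^\ell e^{-\kappa\ell(\gap(\cL) \wedge m)/v_{\rm min}} + \ell p^{\d\ell^{\e/2}}.
\]
A union bound over the $O(t/\ell)$ blocks --- with $\kappa$ large, $\d$ small (both uniformly in $p$), and the elementary fact that block-wise WSC with overlap $\ell/2$ upgrades to a global WSC at twice the threshold --- yields \eqref{1}.

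For \eqref{3} I would apply Proposition~\ref{prop:key1} at time $t_\ell$ via the Markov property. On $\cW_{\ell,t}^c$ the configuration $\o(t_\ell)$ has consecutive gaps of zeros at most $\d\ell^\e$ throughout the WSC region. Using Lemma~\ref{linearspeed} to locate the front at $t_\ell$ relative to the front at $t$, I would pick a zero $z$ of $\o(t_\ell)$ immediately to the left of the absolute image of $\L = [-v_{\rm min}t, -3(v_{\rm max}/v_{\rm min})\kappa\ell]$ (the factor $3$ providing the slack needed for the random displacement of the front). Proposition~\ref{prop:key1} then yields a bound of the form $v_{\rm min}t\,(c^*/q)^{\d\ell^\e} e^{-\kappa\ell(\gap(\cL) \wedge m)/v_{\rm min}}$, and for $\kappa$ large enough the linear-in-$\ell$ exponent dominates the $\ell^\e$ term, giving $O(e^{-t^{\e/2}})$. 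For \eqref{2} I would run the same covering/Markov scheme as for \eqref{1} but restricted to the shorter interval $J = [-3(v_{\rm max}/v_{\rm min})\kappa\ell, -\kappa\log\ell)$, invoking Corollary~\ref{cor:spacing}(2) in place of (3); the exponential part of its bound is made $o(\ell^{-9})$ by choosing $\kappa$ sufficiently large (so that $t-s_j$ comfortably exceeds $\ell$ for every relevant block), leaving the stationary contribution $O(|J|^{-9}) = O(\ell^{-9})$, which is $O(t^{-7\e})$ as claimed.

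The main obstacle lies in the spatial-covering argument for \eqref{1}: pinning the random front trajectory to a deterministic grid tightly enough that only $O(t/\ell)$ applications of Corollary~\ref{cor:spacing} suffice, while also ensuring that the block-wise WSC statements merge into a global WSC without losing the threshold exponent $\e$. The uniform-in-$p$ boundedness of $\kappa$ asserted in the theorem requires using the $p$-uniform versions of all inputs --- Remark~\ref{speedbnd} ($v_{\rm min}\to 1$), the remark after Lemma~\ref{infinitesupport} ($c^*/q$ and the mixing exponent both bounded uniformly in $p$), and $\lim_{p\to 0}\gap(\cL)=1$ --- so that every constant appearing in the exponents stays in the required range as $p\downarrow 0$.
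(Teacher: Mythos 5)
Your proposal is on the right track in terms of the basic ingredients (the Markov property at intermediate times, Corollary~\ref{cor:spacing}, Proposition~\ref{prop:key1}, Lemma~\ref{linearspeed}, and the $p$-uniform versions of the constants), but each of the three parts contains a genuine gap, all of which boil down to controlling the region \emph{close to} the front.

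For~\eqref{1}, the deterministic time grid $s_j=j\ell/(2v_{\rm max})$ with blocks $B_j=I^{\ell}_{\o(s_j)}$ and the restriction $s_j\le t-\kappa\ell/v_{\rm min}$ leaves a coverage hole: on the good event of Lemma~\ref{linearspeed}, the rightmost admissible block ends near $X(\o(t))-(\kappa-1)\ell$, while the target region extends all the way to $X(\o(t))-\ell$. Shrinking the time gap to $t-s_j\sim \ell/v_{\rm max}$ would close the spatial hole, but then the first term $(c^*/q)^{\ell}e^{-(t-s_j)(\gap\wedge m)}$ of Corollary~\ref{cor:spacing}\eqref{eq:3} is no longer small unless $(\gap\wedge m)/v_{\rm max}>\log(c^*/q)$, which is not guaranteed. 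The paper gets around this precisely by not applying the local-relaxation bound to a length-$\ell$ interval: it conditions on the hitting time $\tau_x$ of the left end $x$ of the putative run of ones, splits according to whether the linking event $F(x,a;\tau_x,t)$ holds, and when it does holds applies Corollary~\ref{cor:spacing}\eqref{eq:1} to the \emph{short} interval $I_x$ of length $\d(v_{\rm min}t)^\e/2\ll\ell$, which needs relaxation time only $\propto (a-x)/v_{\rm max}\ge\ell/v_{\rm max}$; the factor $\d$, chosen small, then makes $(c^*/q)^{|I_x|}$ negligible against $e^{-\ell(\gap\wedge m)/v_{\rm max}}$. A related issue is quantitative: your block-wise bound has equilibrium term $\ell p^{\d\ell^{\e/2}}=t^\e p^{\d t^{\e^2/2}}$, which gives $O(e^{-t^{\e^2/2}})$ rather than the stated $O(e^{-t^{\e/2}})$ (the paper's union bound produces $p^{\frac{\d}{2}(v_{\rm min}t)^\e}$, an $\e$-exponent, not $\e^2$).

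For~\eqref{2}, ``the same covering scheme restricted to $J$'' has the same near-front problem (now at distance $\kappa\log\ell$ from the front), and it also ignores that~\eqref{2} is an $\cF_{t_\ell}$-conditional statement whose bound contains the indicator $\mathds{1}_{\cW_{\ell,t}}(\o(t_\ell))$. The paper splits $J$ into a piece to the \emph{right} of $X(\o(t_\ell))$, handled by a very fine time grid $s_j=t_\ell+j/\ell^2$ together with short random intervals of length $\D=O(\log\ell)$ (so the relaxation window $(\kappa/v_{\rm max})\log\ell$ suffices), and a piece to the \emph{left} of $X(\o(t_\ell))$, where the WSC of $\o(t_\ell)$ is used to find a zero $x^*$ and then Proposition~\ref{prop:key1} is applied to $[x^*,X(\o(t_\ell))]$. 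Your sketch contains neither the fine-grid construction nor the Step-(2) use of WSC, and without the latter the portion of $J$ predating $X(\o(t_\ell))$ is not controlled conditionally on $\cF_{t_\ell}$.

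For~\eqref{3}, the target is a marginal of $\mu^t_\o(\cdot\mid\cF_{t_\ell})$ on an interval $\L$ that is shifted by the \emph{random}, non-$\cF_{t_\ell}$-measurable location $X(\o(t))$. Picking a zero of $\o(t_\ell)$ to the left and invoking Proposition~\ref{prop:key1} handles the relaxation, and you correctly identify the role of the factor $3$ and of $\cW^c_{\ell,t}$, but you never decouple the event $\{\vartheta_{X(\o(t))}\o(t)_\L\in A\}$ from the event $\{X(\o(t))=a\}$. This is exactly what Proposition~\ref{prop:decor} is for: the paper decomposes over $a$ in a $v_{\rm max}(t-t_\ell)$-window, factorizes the two indicators up to $O(\ell e^{-\ell})$, and only then applies Proposition~\ref{prop:key1} to the $\cF_{t_\ell}$-measurable interval $I=[X(\o(t_\ell))-v_{\rm min}t,X(\o(t_\ell))-\ell]$ for each fixed $a$. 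Without this decoupling step the argument is incomplete.
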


\subsection{Non-equilibrium properties of the law behind the front: Proof of Theorem~\ref{th:key3}}
We begin by proving \eqref{1}. Bounding $\sup_{\o\in
  \O_\Front}\m^t_\o\left(\cW_{\ell,t}\right)$ from above is equivalent to bounding $\sup_{\o\in
  \O_\Front}\bbP_\o(\o(t)\in \cW^*_{\ell,t})$ from above, where
$\cW^*_{\ell,t}$ denotes the set of configurations $\o\in \O^*$ which do not
satisfy the
    spacing condition in $[X(\o)-v_{\rm min}t,X(\o)-\ell]$.

Using Lemma~\ref{linearspeed}, with probability greater than $1- e^{-\g
  t}$ we can assume that $X(\o(t))\in [v_{\rm min}t, v_{\rm
  max}t]$.
Next we observe that, for any $a\in [v_{\rm min}t, v_{\rm
  max}t]$, the events  $\{X(\o(t))=a\}$ and $\{\o(t)\in
\cW^*_{\ell,t}\}$ imply that there exists $x\in \bbZ$ with the following properties:
\begin{itemize}
\item \hskip 1cm $0\le x \le  a-\ell$;
\item \hskip 1cm The hitting time $\tau_x:=\inf\{s>0: X(\o(s))=x\}$ is smaller
than $t$;
\item \hskip 1cm $\o(t)$ is identically equal to one in the interval $I_x:=[x,x+\d (v_{\rm min}\,t)^{\eps}/2]$;
\item \hskip 1cm The linking event $F(x,a;\t_x,t)$ defined in Lemma~\ref{finitespeed}
occurred.
\end{itemize}
In conclusion, using twice a union bound (once for the
choice of $a\in [v_{\rm min}t, v_{\rm
  max}t]$ and once for the choice of $x\in [0,a-\ell]$) together with the strong
Markov property at time $\t_x$, we get
\begin{align*}
    & \bbP_\o(\o(t)\in \cW^*_{\ell,t}) \\
\quad & \le e^{-\g t} + \sum_{a=v_{\rm min}t}^{v_{\rm max}t}\
  \sum_{x=0}^{a-\ell } \    \bbP\left(F(x,a;\t_x,t)\right){\mathds
    1}_{\{|x-a|\ge v_{\max}(t-\t_x)\}} \\
& \quad \qquad \, + \sum_{a=v_{\rm min}t}^{v_{\rm max}t}\  \sum_{x=0}^{a-\ell } \left[
 \|\bbP^t_{\o(\t_x)}-\pi\|_{I_x} + p^{\frac{\d}{2}( v_{\rm min}t)^{\eps}}\right]{\mathds 1}_{\{|x-a|\le v_{\rm  max}(t-\t_x)\}} \\
\quad &\le (v_{\rm \max}t)^2 \bigg[
 e^{-\g t}+ 2e^{-\ell}+  \sqrt{2}{{\d}( v_{\rm min}t)^{\eps}}\,\left(\frac{c^*}{ q
    }\right)
^{\frac{\d}{2}( v_{\rm min}t)^{\eps}} e^{-\frac{\ell}{v_{\rm max}}
  \,(\gap(\cL)\wedge m)}+ p^{\frac{\d}{2}( v_{\rm min}t)^{\eps}}\bigg].
\end{align*}
Above we used Lemma~\ref{finitespeed} in the case $
|x-a|\ge v_{\rm max}(t-\t_x)$ and \eqref{eq:1} of Corollary~\ref{cor:spacing}
otherwise. The statement \eqref{1} now follows by taking $\d$ small enough.

\medskip

We now prove \eqref{2}. As before we give the result in the East
process setting (\ie for the law $\bbP_\o^t(\cdot\mid \cF_s)$ and
$\cS_{\ell}$ replaced by its random shifted version
$\cS^*_{\ell}$). We decompose the interval $[X(\o(t))-3(v_{\rm max}/v_{\rm
  min})\kappa\,\ell, X(\o(t)) -\kappa \log \ell)]\cap \bbZ$ where we want SSC to
hold into $[X(\o(t_\ell)),X(\o(t))-\kappa \log \ell]$ and $[X(\o(t))-3(v_{\rm
  max}/v_{\rm min})\kappa\,\ell,X(\o(t_\ell))].$\\
  Note that by Lemma~\ref{linearspeed} we can ignore the events $\{X(\o(t_\ell))>X(\o(t))-\kappa \log \ell\}$ and  $\{X(\o(t))-3(v_{\rm
  max}/v_{\rm min})\kappa\,\ell > X(\o(t_\ell))\}.$ \\

  \noindent
We now proceed in two steps: \begin{inparaenum}[(1)]
    \item we show that
SSC occurs with high probability in the first interval. Here we do not use the condition that
$\o(t_\ell)\notin \cW^*_{\ell,t}$.
\item we prove the same
statement for the second
interval. Here instead the fact that $\o(t_\ell)\notin
\cW^*_{\ell,t}$ will be crucial.
  \end{inparaenum}
\\

- {\it Step (1).} Let $\D\equiv 5\log \ell/(|\log p|\wedge 1)$. For any
intermediate time $s\in [t_\ell, t -(\kappa/v_{\rm
  max})\log \ell]$,
Corollary~\ref{cor:spacing} together with the Markov property at time
$s$ show that
\begin{align}
\bbP_\o&\left(\o(t)_x=1\ \forall x\in [X(\o(s)),X(\o(s))+\D]\mid
  \cF_s\right)\nonumber\\
&\le \left\|\bbP_\o(\cdot\mid \cF_s)-\pi\right\|_{[X(\o(s)),X(\o(s))+\D]}+
\pi\left(\o_x=1\ \forall x\in [X(\o(s)),X(\o(s))+\D]\right)
\nonumber\\
\label{eq:4}
&\le \D\left(\frac{c^*}{q}\right)^\D e^{-(t-s)(\gap(\cL)\wedge m)} +p^{|\D|}=O(t^{-10\eps}).
\end{align}
Above we used the fact that $t-s\ge \kappa/v_{\rm max}\log
\ell$. Hence,  $\kappa$ can be chosen depending only on
$p$ such that  \eqref{eq:4} holds and $\kappa$ stays bounded as $p\downarrow 0.$\\

\noindent
We now take the union of the random intervals $[X(\o(s)),X(\o(s))+\D]$
over discrete times $s$ of the form $s_j= t_\ell+ j/\ell^2$,
$j=0,1,\dots, n$ and $n$ such that $s_n=t -(\kappa/v_{\rm max})\log
\ell$. Thus $n=O(\ell^3)$=$O(t^{3\eps})$. The aim here is to show
that, with high probability,
the above union is actually an interval containing the target one $[X(\o(t_\ell)),
X(\o(t))-\kappa \log \ell]$, with the additional property that it does not contain
a sub-interval of length $\D$ where $\o(t)$ is constantly equal to one (which will then imply~\eqref{2}, with room to spare).\\

\noindent
We now upper bound the probability that the set
$\cup_{j=0}^n[X(\o(s_j)),X(\o(s_j))+\D]$ is not an interval. It is an easy observation that if $X(\o(s_n))>X(\o(s_0))$ then the aforementioned event occurs if $X(\o(s_{j+1}))-X(\o(s_j))\le \D$ for $j=0,1,\ldots n.$
Now by the lower bound in Lemma~\ref{linearspeed}
$$\bbP(X(\o(s_n))>X(\o(s_0)))\ge 1-e^{-ct^{\eps}}$$ for some constant $c.$
Also
\begin{align*}
\sum_j \bbP_\o&(X(\o(s_{j+1}))-X(\o(s_j))\ge \D)\\
&\le \sum_j\E\left[
\bbP_\o\left(F(X(\o(s_j)),X(\o(s_j))+\D; s_j,s_{j+1})\mid
  \cF_{s_j}\right)\right]\\
&\le n e^{-\D} = O(t^{-8\eps}).
\end{align*}
Above $F(X(\o(s_j)),X(\o(s_j))+\D; s_j,s_{j+1})$ is the linking event
and we used Lemma~\ref{finitespeed} because $\D\gg (s_{j+1}-s_j)$.

Moreover, Lemma~\ref{linearspeed} implies that $\kappa$ can be chosen (bounded as $p\downarrow 0$), such that with probability greater than
\[
1-e^{-\g (t-s_0)}-e^{-\g (t-s_n)}=1-O(t^{-10 \eps}),
\]
the front $X(\o(t))$ satisfies
\begin{align*}
X(\o(t))\le X(\o(s_n))+v_{\rm max}(t-s_n)\le X(\o(s_n))+\kappa\log
\ell.
\end{align*}
Thus
\[
[X(\o(t_\ell)),X(\o(t))-\kappa \log \ell] \subset \cup_{j=0}^n[X(\o(s_j)),X(\o(s_j))+\D].
\]
with probability $1-O(t^{-10 \eps})$.

Finally, using \eqref{eq:4} and union bound, the probability that  there exists $j\le n$ such that
$\o(t)$ is identically equal to one in $[X(\o(s_j)),X(\o(s_j))+\D]$ is
$O(t^{-7\eps})$ uniformly in the configuration at time $t_\ell$.

In conclusion we proved that SSC holds with probability $1-
O(t^{-8\eps})$ in an interval containing $[X(\o(t_\ell)),
X(\o(t)-\kappa \log \ell]$. The first step is complete.\\

- {\it Step (2).} Let $x^*=\max\{x\le  X(\o(t_\ell))- 3\kappa (v_{\rm
  max}/v_{\rm min})\ell: \ \o(t_\ell)_x=0\}$. Since $\o(t_\ell)\notin
\cW^*_{\ell,t}$ such a zero exists.
Moreover, $\o(t_\ell)\notin \cW^*_{\ell,t}$ implies that
$\o(t_\ell)$ has a zero in every sub-interval of $[x^*, X(\o(t_\ell))-
\ell]$ of length $\d t^\eps=\d \ell$. Hence we can apply Proposition
\ref{prop:key1} to the interval $[x^*, X(\o(t_\ell))]$ to get that
\[
\|\,\bbP^t_\o(\cdot \mid \cF_{t_\ell})-\pi\|_{[x^*, X(\o(t_\ell))]}=
O(e^{- t^{\eps/2}}),
\]
by choosing $\kappa$ large enough. Since by Remark~\ref{speedbnd} $v_{\rm min}\rightarrow 1$ as $p \downarrow 0$, we can choose $\kappa$ to be bounded as $p \downarrow 0$.
Also
\[
\pi\left(\o:\ \o \text{
  violates SSC in }[x^*, X(\o(t_\ell))]\right)= O(t^{-7\eps}).
\]
Thus we have proved that SSC holds in $[x^*, X(\o(t_\ell))]$ with probability $1-O(t^{-7\eps})$.

Finite speed of propagation in the
form of Lemma~\ref{linearspeed} guarantees that, with probability $1- O(e^{-\g
  (t-t_\ell)})$, $x^* < X(\o(t))-2\kappa (v_{\rm max}/v_{\rm
  min})\ell$.
The proof of \eqref{2} is complete.

\medskip
It remains to prove \eqref{3}. Let $\L:= [-v_{\rm
  min}t,-3(v_{\rm max}/v_{\rm min})\kappa\ell]\cap \bbZ$ and let $A\subset \{0,1\}^{\L}$. Recall
Definition~\ref{shift} of the shifted configuration $\vartheta_a \o$ and that $t_{\ell}=t-\kappa \ell/{v_{\rm min}}$.
Then \eqref{3} follows once we show that
\[
|\bbP_\o(\vartheta_{X(\o(t)} \o(t)_\L\in A\mid \cF_{t_\ell})-\pi(A)|
\le e^{-t^{\eps/2}}
\]
whenever $\o(t_\ell)$ satisfies WSC in the interval
$I=[X(\o(t_\ell))-v_{\rm min}t, X(\o(t_\ell))-\ell]$. This property is
assumed henceforth.
Let us decompose $\bbP_\o(\vartheta_{X(\o(t)} \o(t)_\L\in A\mid
\cF_{t_\ell})$ according to the value of the front:
\begin{gather*}
  \bbP_\o\left(\vartheta_{X(\o(t))} \o(t)\in A\mid
\cF_{t_\ell}\right)=
\sum_{a\in \bbZ}\bbE_\o\left[{\mathds 1}_{\{\vartheta_{a}
\o(t)_\L\in A\}}\,{\mathds 1}_{\{X(\o(t))=a\}}\mid \cF_{t_\ell}\right].
\end{gather*}
Using Lemma~\ref{linearspeed},  $0<X(\o(t))-X(\o(t_\ell))\le
v_{\rm max}(t-t_\ell)$ occurs with probability
greater than $1-e^{-\g(t-t_\ell)}$. Thus
\begin{gather*}
\sum_{a\in \bbZ}\bbE_\o\left[{\mathds 1}_{\{\vartheta_{a}
\o(t)_\L\in A\}}\,{\mathds 1}_{\{X(\o(t))=a\}}\mid
\cF_{t_\ell}\right]\\
=
\sum_{\substack{ a\in \bbZ \\ 0<a-X(\o(t_\ell))\le v_{\rm max}(t-t_\ell)}}\bbE_\o\left[{\mathds 1}_{\{\vartheta_{a}
\o(t)_\L\in A\}}\,{\mathds 1}_{\{X(\o(t))=a\}}\mid
\cF_{t_\ell}\right] + e^{-\g(t-t_\ell)}.
\end{gather*}
By definition, the event $\{\vartheta_{a}
\o(t)_\L\in A\}$  is the same as the event $\{
\o(t)_{\L+a}\in A\}$.
Using the restriction  that $|a-X(\o(t_\ell))|\le v_{\rm max}(t-t_\ell)$, the choice of $\L$ and the fact that $(v_{\rm max}/v_{\rm min})\kappa\ell \ge  v_{\rm max}(t-t_\ell)$, we get $\L+a \subset (-\infty,X(\o(t_\ell))-2( v_{\rm max}/v_{\rm min})\kappa\ell]$. Thus, the event $\{
\o(t)_{\L+a}\in A\}$ satisfies the hypothesis of  Proposition~\ref{prop:decor}, which can then be applied to
each term in the above sum to
get
\begin{align*}
&\sum_{\substack{a\in \bbZ \\ 0<a-X(\o(t_\ell))\le v_{\rm max}(t-t_\ell)}}\!\!\!\bbE_\o\left[{\mathds 1}_{\{\vartheta_{a}
\o(t)_\L\in A\}}\,{\mathds 1}_{\{X(\o(t))=a\}}\mid \cF_{t_\ell}\right]
\\
=&\sum_{\substack{a\in \bbZ \\ 0<a-X(\o(t_\ell))\le v_{\rm max}(t-t_\ell)}}
\!\!\!\bbE_\o\left[{\mathds 1}_{\{\vartheta_{a}
\o(t)_\L\in A\}}\mid \cF_{t_\ell}\right]\,\bbE_\o\left[{\mathds
1}_{\{X(\o(t))=a\}}\mid \cF_{t_\ell}\right] +O(\ell \,e^{-\ell}).
   \end{align*}
Finally we claim that, for any $a$ such that $0<a-X(\o(t_\ell))\le v_{\rm
  max}(t-t_\ell)$, if $\d$ is chosen small enough and $\kappa$ large
enough depending on $p$ (bounded as $p \downarrow 0$),
\begin{equation}
  \label{eq:any a}
\bbE_\o\left[{\mathds 1}_{\{\vartheta_{a}
\o(t)_\L\in A\}}\mid \cF_{t_\ell}\right] = \pi(A) + O(e^{-t^{\eps/2}}).
 \end{equation}
To prove it we apply Proposition~\ref{prop:key1} to the interval $I=[X(\o(t_\ell))-v_{\rm min}t, X(\o(t_\ell))-\ell]$) to get that
\begin{equation}
  \label{eq:5}
\|\bbP^t_\o(\cdot\mid \cF_{t_\ell})-\pi\|_I \le
|I|\left(\frac{c^*}{ q}\right)^{\d |I|^\eps}
e^{-(t-t_\ell)(\gap(\cL)\wedge m)},
\end{equation}
where $|I|=O(t)$ is the length of $I$, since by assumption
$\o(t_\ell)$ satisfies WSC in $I$. Because of our choice of the
parameters $(\ell, t_\ell)$ the r.h.s.\ of \eqref{eq:5} is
$O(e^{-t^{\eps/2}})$ if $\d,\kappa$ are chosen small enough and large enough
respectively depending on $p$. Since by Remark~\ref{speedbnd} $v_{\rm min}\rightarrow 1$ as $p \downarrow 0,$ $\kappa$ can be chosen to be bounded as $p \downarrow 0.$\\

\noindent
The claim now follows because $\{\o:
\vartheta_a\o \in A\}\subset \{0,1\}^{\L+a}$, with
\begin{align*}
\L+a &=[-v_{\rm min}t +a, -3(v_{\rm max}/v_{\rm min})\kappa\ell +a]\\
&\subset [X(\o(t_\ell))-v_{\rm min}t_\ell,\, X(\o(t_\ell))- (v_{\rm max}/v_{\rm
  min})\kappa\ell\,] \subset I,
\end{align*}
together with the translation invariance of $\pi$ expressed by $\pi\left(\{\o: \vartheta_a\o \in A\}\right)=\pi(A)$.
This establishes \eqref{3} and concludes the proof of Theorem~\ref{th:key3}.
Notice that at all points in the proof, $\kappa$ was chosen to be bounded as $p \downarrow 0.$
\qed

\subsection{On the rate of convergence to
  the invariant
  measure $\nu$: Proof of Theorem~\ref{coupling}}
The proof is based on a coupling argument. There exists $v^*>0$ such
that, for any $t$ large enough
 and for any pair of starting
configurations $(\o,\o')\in \O_\Front\times \O_\Front$,
\begin{equation}
  \label{eq:6}
  \|\mu_\o^t-\mu_{\o'}^t\|_{[-v^*t,\,0]} \le c' e^{-t^\a},
\end{equation}
with $(c',\a)$ independent of $(\o,\o')$. Also $v^*, \alpha $ can be chosen uniformly as $p\downarrow 0.$ Once this step is
established and using the invariance of the measure $\nu$ under the
action of the semigroup $e^{t\cL^\Front}$,
\begin{align*}
  \|\,\mu_\o^t -\nu\|_{[-v^*t,\,0]}&=
\|\,\mu_\o^t -\int d\nu(\o')\mu_{\o'}^t\,\|_{[-v^*t,\,0]}\\
&\le \int d\nu(\o')\|\,\mu_\o^t -\mu_{\o'}^t\,\|_{[-v^*t,\,
  0]}\le c' e^{-t^\a}.
\end{align*}
We now prove \eqref{eq:6}. We first fix a bit of notation.

Given $\eps\in (0,1)$ and a large $t>0$, let $\D_1=(\kappa/v_{\rm
  min}) t^\eps$ where $\kappa$ is the constant appearing in Theorem
\ref{th:key3}, let
$\D_2=\kappa\eps\log t$ and define $\D=\D_1+\D_2$. We then set
\[
t_0=(1-\eps)t,\quad t_n=t_{n-1}+\D, \quad n=1,\dots N,\quad  N=
\lfloor\eps\, t/\D\rfloor.
\]
It will be
convenient to refer to the time lag $[t_{n-1},t_n)$ as the
$n^{\rm th}$-round. In turn we split each round into two parts: from
$t_{n-1}$ to $s_n:= t_{n-1}+\D_1$ and from $s_n$ to $t_n$. We will
refer to the first part of the round as the \emph{burn-in part} and to
the second part as the \emph{mixing part}. We also set
$I_n=[-v_{\rm min}t_n+ 2v_{\rm max} \D n, 0]$. Observe that $I_n\neq
\emptyset$ for any $n\le N+1$ if $\eps$ is chosen smaller
than $v_{\rm min}/v_{\rm max}$ and $t$ is large enough depending on $\eps$.

Next, for any pair $(\mu,\mu')$ of probability
measures on a finite set, we denote by ${\rm MC}(\mu,\mu')$ their
\emph{maximal coupling}, namely the one that achieves the variation
distance between $\mu,\mu'$ in the variational formula (see, e.g.,~\cite{LPW})
\[
\|\mu-\mu'\|=\inf\{M(\o\neq \o'):\ M \text{ a coupling of } \mu,\mu'\}.
\]
If $(\mu,\mu')$ are probability
measures on $\O$ and $\L$ is a finite subset of
$\bbZ$, we define the \emph{
  $\L$-maximal coupling} ${\rm MC}_\L(\mu,\mu')$  as follows:
\begin{enumerate}[a)]
\item first sample $(\o_\L,\o_\L')$ according to
the maximal coupling of the marginals of $\mu,\mu'$ on $\O_\L$;
\item then sample \emph{independently}
$(\o_{\bbZ\setminus \L}, \o'_{\bbZ\setminus \L})$ according to their
respective conditional distribution $\mu(\cdot\mid \o_\L), \mu'(\cdot\mid \o'_\L)$.
\end{enumerate}
Finally the \emph{basic coupling} for the
East process will be the one in which two configurations evolve
according to the graphical construction using the same Poisson clocks
and the same coin tosses.

We are now ready to recursively construct the coupling $M_{\o,\o'}^t$ of
$\mu_\o^t,\mu_{\o'}^t$ satisfying \eqref{eq:6}. For lightness of
notation, in the sequel the starting configurations $(\o,\o')$ will be
sometimes omitted.  \\
\begin{definition}[The coupling $M_{\o,\o'}^t$] We first define a
  family $\{M^{(n)}\}$ of couplings for
  $\{\left(\mu_\o^{t_n},\mu^{t_n}_{\o'}\right)\}_{n=0}^N$ as follows.
$M^{(0)}$ is the trivial product coupling. Given $M^{(n)}$, the coupling $M^{(n+1)}$ at time $t_{n+1}$ is constructed
according to the following algorithm:
  \begin{enumerate}[(a)]
  \item Sample $(\o(t_n),\o'(t_n))$ from $M^{(n)}$. If they
    coincide in the interval $I_n$ then let them evolve according
    to the basic coupling for a time lag $\D$;\\
\item otherwise, sample $(\o(s_n),\o'(s_n))$ at the
  end of the burn-in part of round$(n+1)$ via the
  $\L_n$-maximal coupling $MC_{\L_n}$ for the laws
  $\mu_\o^{s_n}(\cdot\mid \cF_{t_n})$ and $\mu_{\o'}^{s_n}(\cdot\mid\cF_{t_n})
  $ at the configurations $(\o(t_n),\o'(t_n))$ from step (a). Here $\L_n=[-v_{\rm min}s_n, -3\left(v_{\rm max}/v_{\rm
      min}\right)\kappa t^\eps]$.\\
  \begin{enumerate}[(i)]
  \item If $(\o(s_n),\o'(s_n))$ are not equal in the interval $\L_n$,
    then let them evolve for the mixing part of the round (i.e., from
    time $s_n$ to
    time $t_{n+1}$) via
    the basic coupling.\\
\item  If instead they agree on $\L_n$, then search for the rightmost common zero of
  $(\o(s_n),\o'(s_n))$ in
  $\L_n$ and call $x_*$ its position. If there is no such a zero,
  define $x_*$ to be the right boundary of $\L_n$. Next sample a
  Bernoulli random variable $\xi$ with
  $\text{\rm Prob}(\xi=1)=e^{-2\D_2}$. The value $\xi=1$ has to be
  interpreted as corresponding to the event that the two Poisson clocks
  associated to $x_*$ and to the origin in the graphical construction did not ring during the mixing part of
  the round.
\vskip 0.1cm   \begin{enumerate}[(1)]
  \item
If $\xi=1$,  set $\o(t_{n+1})_{x_*}=\o(s_n)_{x_*}$ and
    similarly for $\o'$. The remaining part of the configurations at
    time $t_{n+1}$ is sampled using the basic coupling to the left of
    $x_*$ and the maximal coupling for the East process in the
    interval $[x_*+1,-1]$ with boundary condition at $x_*$ equal to $\o(s_n)_{x_*}$.
\vskip 0.1cm
\item If $\xi=0$ we let evolve $\left(\o(s_n),\o'(s_n)\right)$ with the basic
  coupling conditioned to have at least one ring either at $x_*$ or at
  the origin or both.
  \end{enumerate}
  \end{enumerate}
  \end{enumerate}
The final coupling $M^t_{\o,\o'}$ will be obtained by first sampling $\left(\o(t_N),\o'(t_N)\right)$
from $M^{(N)}$ and then by applying the basic coupling for the time
lag $(t-t_N)$.
\end{definition}
It is easy to check that $\{M^{(n)}\}$ is indeed a family of couplings
for $\{\left(\mu_\o^{t_n},\mu^{t_n}_{\o'}\right)\}_{n=0}^N$. Define now
\[
p_n:= M^{(n)}\left(\o\neq \o' \text{ in the interval } I_n\right)
\]
and recall that $\eps$ is the exponent entering in the definition of
the round length $\D$.
\begin{claim}\label{cm1} There exist $\eps_0>0$ such that, for all
  $\eps<\eps_0$ and all $t$ large enough depending on $\eps$,
\[
p_N= O(e^{-t^\a}),
\]
for some positive $\a=\a(\eps)$.
\end{claim}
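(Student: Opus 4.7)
I plan to derive a one-step contraction of the form
\[
p_{n+1}\le (1-\rho)\,p_n+\eta,\qquad n=0,1,\dots,N-1,
\]
with $\rho>0$ a lower bound on the per-round probability of aligning the two copies on $I_{n+1}$ starting from an uncoupled state, and $\eta$ an upper bound on the ``accident'' probability of losing an existing agreement on $I_n$. Iterating from $p_0\le 1$ gives $p_N\le(1-\rho)^N+\eta N/\rho$; with $\D_1\asymp t^\eps$, $\D_2\asymp\log t$ and $N\asymp t^{1-\eps}$, this will be stretched-exponentially small in $t$ as soon as $\eps$ is chosen below an explicit threshold depending only on the constant $\kappa$ of Theorem~\ref{th:key3}.

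\emph{Bounding $\eta$.} I first treat the easy case $\o(t_n)=\o'(t_n)$ on $I_n$, corresponding to step (a): the two copies evolve jointly under the basic coupling for time $\D$. Since both are identically $1$ to the right of their common front and the buffer $2v_{\max}\D n$ in the definition of $I_n$ is tuned so that $I_{n+1}$ (in shifted coordinates) lies strictly inside the surviving agreement region conditionally on the front moving within $[-v_{\max}\D,v_{\max}\D]$ (an event of probability $1-e^{-\g\D}$ by Lemma~\ref{linearspeed}), a fresh disagreement can reach $I_{n+1}$ only through a linking event $F(\cdot,\cdot;t_n,t_{n+1})$ over a spatial distance $\ge v_{\max}\D$, whose probability is $\le e^{-c\D_1}$ by Lemma~\ref{finitespeed}. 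Hence $\eta\le \exp(-c\,t^\eps)$.

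\emph{Bounding $\rho$.} For the harder case $\o(t_n)\neq \o'(t_n)$ on $I_n$, I apply the three conclusions of Theorem~\ref{th:key3} at time $s_n=t_n+\D_1$, with $\ell:=t^\eps$: by \eqref{1} both $\o(s_n)$ and $\o'(s_n)$ satisfy WSC in the bulk behind the front, and on this event \eqref{3} makes the marginal laws on $\L_n$ $O(e^{-t^{\eps/2}})$-close to $\pi$; the $\L_n$-maximal coupling $MC_{\L_n}$ then aligns them on $\L_n$ with probability $1-O(e^{-t^{\eps/2}})$ and the rightmost common zero $x_*\in\L_n$ of step (b)(ii) exists. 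I next force $\xi=1$, which costs the factor $\bbP(\xi=1)=e^{-2\D_2}=t^{-2\k\eps}$; under $\xi=1$ the spins at $x_*$ and the origin are frozen, so the evolution on $[x_*+1,-1]$ is a finite-volume East process with boundary $0$ at $x_*$. I argue that the maximal coupling there succeeds with probability bounded below uniformly in $n$ and $p$, by combining \eqref{3} (which brings both laws to within $O(e^{-t^{\eps/2}})$ of $\pi$ throughout the bulk of $[x_*+1,-1]$) with Proposition~\ref{prop:key1} applied to the $O(\log t)$-long subinterval adjacent to the origin, where \eqref{2} provides a zero within distance $\k\log\ell$ of $-1$ so that $\D_2=\k\eps\log t$ is long enough to equilibrate when $\k$ is large. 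The basic coupling to the left of $x_*$ preserves the agreement already secured on $\L_n\cap(-\8,x_*]$. Combining everything gives $\rho\ge c\,t^{-2\k\eps}$.

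Plugging these into the recursion yields
\[
p_N\le (1-\rho)^N+\eta N/\rho\le \exp\bigl(-c\,t^{\,1-\eps(1+2\k)}\bigr)+t^{1+2\k\eps}\exp(-c\,t^\eps),
\]
which is $O(e^{-t^\a})$ with $\a:=\tfrac12\min\bigl(1-\eps(1+2\k),\eps\bigr)$ whenever $\eps<\eps_0:=1/(1+2\k)$; uniformity of $\k$ as $p\downarrow 0$, inherited from Theorem~\ref{th:key3}, transfers to $\eps_0$ and $\a$. The main obstacle is the positive lower bound on $\rho$: the mixing-part time $\D_2\asymp\log t$ is much shorter than the linear mixing time $\sim t^\eps$ of the East process on $[x_*+1,-1]$, so only the strong spacing estimate \eqref{2}---which forces a zero of $\o(s_n)$ within distance $\k\log\ell$ of the origin---makes it possible to localize the residual mixing to an $O(\log t)$-window for which Proposition~\ref{prop:key1} suffices, with the remainder of $[x_*+1,-1]$ already handled by \eqref{3}.
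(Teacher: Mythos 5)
Your overall strategy mirrors the paper's exactly: establish a one-step recursion $p_{n+1}\le (1-\rho)p_n+\text{(error)}$ with $\rho\asymp e^{-2\D_2}=t^{-2\kappa\eps}$ coming from the event $\{\xi=1\}$ in step (b)(ii), and iterate over $N\asymp t^{1-\eps}$ rounds to obtain stretched-exponential decay. The paper's recursion is
\[
p_{n+1}\le Ce^{-t^{\eps/2}}+p_n\bigl(1-\tfrac12 e^{-2\D_2}\bigr),
\]
which is your inequality with a different split of the additive error.

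There is, however, a bookkeeping issue in how you place the failure of the weak spacing condition. You write ``by \eqref{1} both $\o(s_n)$ and $\o'(s_n)$ satisfy WSC \dots; on this event \eqref{3} makes the marginal laws \dots close to $\pi$'' and then conclude $\rho\ge c\,t^{-2\kappa\eps}$. This treats \eqref{1} as a deterministic guarantee and implicitly asserts $\bbP(A_{n+1}\mid A_n^c)\ge\rho$, where $A_n=\{\o(t_n)=\o'(t_n)\text{ on }I_n\}$. But the unconditional bound $\bbP(\text{WSC fails at }t_n)=O(e^{-t^{\eps/2}})$ does not control the \emph{conditional} probability $\bbP(\text{WSC fails}\mid A_n^c)=\bbP(\text{WSC fails}\cap A_n^c)/p_n$: when $p_n$ becomes small (which is precisely the regime one is aiming for) this ratio can be $O(1)$, and the contraction factor $1-\rho$ applied to $p_n$ is no longer justified. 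The correct decomposition is
\[
p_{n+1}\le \underbrace{\bbP(A_{n+1}^c\cap A_n)}_{\le\,\eta}
+\underbrace{\bbP(\text{WSC fails at }t_n)}_{=\,O(e^{-t^{\eps/2}})}
+\underbrace{\bbP\bigl(A_{n+1}^c\cap A_n^c\cap\text{WSC ok}\bigr)}_{\le\,p_n(1-\rho)},
\]
so the WSC failure must sit in the \emph{additive} term (as in the paper, absorbed into $Ce^{-t^{\eps/2}}$), not inside the contraction. With this correction your $\eta$ becomes $O(e^{-t^{\eps/2}})$ rather than $O(e^{-t^{\eps}})$, the second term in your final display is $t^{1+O(\kappa\eps)}e^{-ct^{\eps/2}}$, and the conclusion $p_N=O(e^{-t^\a})$ for $\a(\eps)>0$ and $\eps$ below an explicit threshold is unaffected. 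The rest of your argument (linking events for $\eta$ via Lemma~\ref{finitespeed}, the role of SSC and Proposition~\ref{prop:key1} in equilibrating the $O(\log t)$-window $[x_*+1,-1]$, and uniformity of $\kappa$ as $p\downarrow 0$) matches the paper's reasoning.
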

 \begin{proof}%[Proof of the Claim]
The claim follows from the recursive inequality:
\begin{equation}
  \label{eq:8}
p_{n+1}\le    Ce^{-t^{\eps/2}} +p_n(1-e^{-2\D_2}/2),
\end{equation}
for some constant $C$.
In fact, if we assume \eqref{eq:8} and recall
that $e^{-2\D_2}= t^{-2\kappa  \eps}$, we get
\begin{align*}
p_N & \le C e^{-t^{\eps/2}}[1+(1-e^{-2\D_2}/2)+(1-e^{-2\D_2}/2)^2+\ldots] + \left(1-e^{-2\D_2}/2\right)^N \\
 & \le  2C e^{-t^{\eps/2}}t^{2\kappa  \eps} + \left(1-e^{-2\D_2}/2\right)^N=O(e^{-t^{\eps/3}}),
\end{align*}
provided that $1-\eps(1+ 2\kappa)> \eps/3$, \ie $\eps< 3/(4+6\kappa),$ since $N>c t^{1-\eps}$ for some constant $c$.
Notice crucially  that since $\kappa$ was bounded as $p \downarrow 0$ in the statement of Theorem \ref{th:key3}, $\eps_0$ and $\alpha(\eps)$ can be chosen uniformly as $p \downarrow 0.$\\

\noindent
To prove \eqref{eq:8} we use Lemma~\ref{finitespeed} together with Theorem~\ref{th:key3}.
We begin by examining the possible occurrence of two very unlikely
events each of which will contribute to
the constant term in \eqref{eq:8}.
\begin{itemize}[$\bullet$]
\item  The first possibility is that $\o(t_{n})= \o'(t_{n}) \text{ in the
    interval  } I_n$ and $F(a_{n},a_{n+1};t_n,t_{n+1})$
   occurred. Here $a_n=-v_{\rm
  min}t_n+2v_{\rm max}\D n$ is the left boundary of $I_n$
and similarly for $a_{n+1}$. The linking event could in fact move
possible discrepancies between $\o(t_n),\o'(t_n)$ sitting outside $I_n$ to the inside of $I_{n+1}$. Since $|a_{n}-a_{n+1}|\ge v_{\rm
  max}(t_{n+1}-t_n)$, Lemma~\ref{finitespeed} shows that this case gives a
contribution to $p_{n+1}$ which is $O(e^{-|a_{n}-a_{n+1}|})=O(e^{-v_{\rm max}t^\e})$.
\item The second possibility  is that either $\o(t_n)$ or $\o'(t_n)$ do not
    satisfy the $(\d,\eps)$-weak
spacing condition in $[-v_{\rm min}t_n,-t^\eps_n]$. The bound
\eqref{1} of Theorem
\ref{th:key3} shows that the contribution of such a case is
$O(e^{-t^{\eps/2}})$.
\end{itemize}
Having discarded the occurrence of the above ``extremal'' situations, we now assume that $(\o(t_{n}),\o'(t_{n}))$ are such that: (i) they
are different in the interval
  $I_n$; (ii) they satisfy the $(\d,\eps)$-weak
spacing condition in $[-v_{\rm min}t_n,-t^\eps_n]$. It will be useful
to denote by $\cG_n$ the
set of pairs $(\o,\tilde \o)$ fulfilling (i) and (ii) above.

We will
show that, \emph{uniformly} in $(\o,\tilde\o)\in \cG_n$, the
probability that at the end of the round $\left(\o(\D),\tilde \o(\D)\right)$ are not
coupled inside the interval $I_{n+1}$  is smaller
than $(1- \frac 12 e^{-2\D_2})$. That clearly
proves the second term in \eqref{eq:8}.

To prove that, recall the definition of the $\L_n$-maximal coupling
$MC_{\L_n}$, fix $(\o,\tilde\o)\in \cG_n$ and consider the event $\cB$
that:
\begin{enumerate}[(i)]
\item at the
end of the \emph{burn-in} part of the round $\o(\D_1)=\tilde\o(\D_1)$ in
$\L_n$,
\item the vertex $x_*$ appearing in (ii) of step (b) of
Definition~\ref{coupling} is  within $\eps \log t$ from the right
boundary of $\L_n$ and $\o(\D_1)_{x_*}=\tilde\o(\D_1)_{x_*}=0$,
\item
$\o(\D_1)$ and $\tilde \o(\D_1)$ satisfy SSC  in the interval $[-3(v_{\rm max}/v_{\rm min})\kappa
t^\eps,-\kappa \eps \log t]$.

\end{enumerate}

Theorem~\ref{th:key3} proves that, uniformly in $\o,\tilde\o\in \cG_n$,
\[
MC_{\L_n}(\cB) \ge 1- O(e^{-t^{\eps/2}}) - O(t^{-7\eps}) -
p^{\eps \log t}= 1- O(p^{\eps \log t}).
\]
The first error term takes into account the variation distance
from $\pi$ of the marginals in $\L_n$ of $\bbP_\o^{\D_1}$ and $\bbP_{\tilde
  \o}^{\D_1}$, the second error term bounds the probability that
either $\o(\D_1)$ or $\tilde\o(\D_1)$ do not satisfy the SSC
condition in the interval $[-3(v_{\rm max}/v_{\rm min})\kappa
t^\eps,-\kappa \eps \log t]$ and the third term bounds the $\pi$-probability that the event in item (ii) does not occur.
%is the probability under $\pi$ that the vertex $x_*$ is not within  $\eps \log t$ from the right boundary of $\L_n$.

Next we claim that, for any $\kappa$ large enough and any $z\in \L_n$ at
distance at most $\eps\log t$ from the right boundary of $\L_n$,
\begin{gather}
\sup_{\o,\tilde\o\in \cG_n}\bbP\left(\o(\D)\neq \tilde \o(\D) \text{ in } I_{n+1}\mid
\cB,\{x_*=z\}, \{\xi =1\}\right)\nonumber \\
\label{eq:10}
\le e^{-|a_n-a_{n+1}|}\  + 3\kappa t^\eps
\left(\frac{c^*}{ q}\right)^{\eps \log t}\, e^{-\D_2(\gap(\cL)\wedge m)}= O(t^{-2\eps}).
\end{gather}
The first term in the r.h.s.\ is the probability that the linking
event $F(a_n,a_{n+1};\D_1,\D)$ occurred. The second term comes
from Proposition~\ref{prop:key1} and it bounds from above the probability that, under the
maximal coupling for the East process in the interval $[x_*+1,-1]$ and
in a time lag $\D_2$, we see a discrepancy.

In conclusion, the probability that $\o(\D)= \tilde \o(\D)$ in
$I_{n+1}$ is larger
than
\[
MC_{\L_n}(\cB)(1-o(1))\bbP(\xi=1)\ge  \frac 12 e^{-2\D_2},
\]
thus proving the claim.
\end{proof}
We are now in a position to finish the proof of Theorem~\ref{coupling}. Let
$v^*\equiv v_{\rm min} -3\eps v_{\rm max}$ and let $a_N=-v_{\rm min}t_N+\eps v_{\rm max} t$ be the left boundary of the interval $I_N=[a_N,0]$.
Since by Remark~\ref{speedbnd}, $v_{\rm min}$ converges to $1$ as $p \downarrow 0,$ $v^*$ can be chosen uniformly as $p \downarrow 0.$\\

\noindent
Pick two configurations
$\o(t_N),\o'(t_N)$ at time $t_N$ and make them evolve under the basic
coupling until time $t$. Clearly the events
$\{\o(t_N)_x=\o'(t_N)_x\ \forall x\in I_N\}$ and $\{\exists\, x\in [-v^*t,0]:\ \o_x(t)\neq
\o'_x(t)\}$ imply  the linking event
$F(a_N,-v^*t;t_N,t)$ from Lemma~\ref{finitespeed}. By construction
$|v^*t-a_N|=\eps v_{\rm max}t \ge v_{\rm max}(t-t_N)$ for large enough $t$. Therefore,
\begin{align*}
 M^t_{\o,\o'}(\exists\, x\in [-v^*t,0]:\ \o_x\neq \o'_x)&\le p_N +
 \bbP\left(F(a_N,-v^*t;t_N,t)\right)\\
&\le O(e^{-t^\a})+ e^{-\eps v_{\rm max}t},
\end{align*}
as required. Moreover, by the proof of Claim \ref{cm1}, $\alpha$ can be chosen uniformly as $p \downarrow 0$. Thus we are done.\qed

\subsection{Mixing properties of the front increments: Proof of Corollary~\ref{cor:wf}}
To prove \eqref{eq:20} we observe that, for any $n\ge v_{\rm max}\D$, the event $|\xi_1|\ge n$
implies the occurrence of the linking event $F(0,n;0,\D)$. Lemma~\ref{finitespeed} now gives that
\begin{equation}
  \label{eq:9}
\bbE_\o\left[f(\xi_1)^2\right] \le \max_{|x|\le v_{\rm max}\D}f(x)^2
+\sum_{n\ge v_{\rm max}\D} f(n+1)^2 e^{-n} <\infty.
\end{equation}
In order to prove \eqref{eq:20tris} we apply the Markov property at
time $t_{n-1}$ and write
\begin{gather*}
\bbE_\o\left[f(\xi_n)\right]=
\int d\mu^{t_{n-1}}_\o(\o')\, \bbE_{\o'}\left[f(\xi_1)\right].
\end{gather*}
At this stage we would like to appeal to Theorem
\ref{coupling} to get the
 sought statement. However Theorem
\ref{coupling} only says that, for any $t$ large enough, $\mu^t_\o$ is very close to the
invariant measure $\nu$ in the interval $[-v^*t,0]$. In order to
overcome this problem,
for any $\o\in \O_\Front$ and any $t>0$ we define $\Phi_t(\o)\in \O_\Front$
as that configuration which is equal to $\o$ in $[-v^* t,0]$ and identically  equal to $1$
elsewhere. Then, under the basic coupling, the front at time $t$
starting from $\Phi_t(\o)$ is different from the front starting from
$\o$
iff the linking event $F(-v^*t,0;0,\D)$ occurred.

In conclusion,  if $v^* t_{n-1}\ge v_{\rm max}\D$,
\begin{align*}
\sup_{\o\in \O_\Front}&\bigg|\int d\mu^{t_{n-1}}_\o(\o')\,
\bbE_{\o'}\left[f(\xi_1)\right] -
\int d\mu^{t_{n-1}}_\o(\o')\, \bbE_{\Phi_{t_{n-1}}(\o')}\left[f(\xi_1)\right]\bigg|\\
&\le
\bbP(F(-v^*t_{n-1},0;0,\D))^{1/2}\sup_{\o\in
  \O_\Front}\bbE_\o\left[f(\xi_1)^2\right]^{1/2}\\
&\le
e^{-v^*t_{n-1}/2} \sup_{\o\in
  \O_\Front}\bbE_\o\left[f(\xi_1)^2\right]^{1/2}.
\end{align*}
We can now apply Theorem~\ref{coupling} to get that
\begin{gather*}
\bigg|\int d\mu^{t_{n-1}}_\o(\o')\,
\bbE_{\Phi_{t_{n-1}}(\o')}\left[f(\xi_1)\right]-\bbE_{\nu}\left[f(\xi_1)\right]\bigg|
\\
\le
\left[ \sup_{\o\in \O_\Front}\|\mu_\o^{t_{n-1}}-\nu\|^{1/2}_{[-v^*t_{n-1},0]}    +
e^{-v^*t_{n-1}/2}\right]\sup_{\o\in
  \O_\Front}\bbE_\o\left[f(\xi_1)^2\right]^{1/2}
= O(e^{-t_{n-1}^\a/2}).
\end{gather*}
To prove \eqref{eq:13} suppose first that $v^*(j-1)\ge v_{\rm max}(n-j)$
where $v^*$ is the constant appearing in Theorem~\ref{coupling}. Then we can
use the Markov property at time $t_{j-1}$ and repeat the previous
steps to get the result. If instead $v^*(j-1)\le
v_{\rm max}(n-j)$ it suffices to write
\[
{\cov}_\o\left(\xi_j,\xi_n \right)= {\cov}_\o\left(\xi_j,\bbE_\o[\xi_n\tc \cF_{t_{j}}] \right)
\]
and apply \eqref{eq:20tris} to $\bbE_\o[\xi_n\tc \cF_{t_{j}}]$ to
get that in this case
\begin{equation}
  \label{eq:16}
 \sup_{\o\in \O_\Front}\left| {\cov}_\o\left(\xi_j,\xi_n \right) \right| =O(e^{-\g (n-j)^\a})
\end{equation}
for some constant $\g$ depending on $v^*,v_{\rm max}$.
Following the exact steps as above after replacing $\xi_j,\xi_n$ by $f(\xi_j),f(\xi_n)$ yields \eqref{eq:13fabio}.
Finally, \eqref{eq:21} follows from exactly the same steps leading to the proof of \eqref{eq:20tris}.
\qed

\section{Proofs of main results}\label{sec:main-proofs}
\subsection{Proof of Theorem~\ref{th:main1}}
We begin with the proofs of \eqref{th1.1} and \eqref{th1.2}.

As far as \eqref{th1.2} is concerned, this follows directly from observing that
\[
\frac{d}{dt} \bbE_{\o}\left[X(\o(t))\right]= q-p\mu^t_\o(\o(-1)=0)=v +O(e^{-t^\a}).
\]
Appealing to \eqref{eq:19} and Corollary~\ref{cor:wf} we get immediately that for any $\o\in \O_\Front$
\[
\bbE_\o\left[\left((X(\o(t))-vt)/t\right)^4\right]=O(t^{-2})
\]
and \eqref{th1.1} follows at once.

\medskip

We next prove \eqref{th1.3}. Using Corollary~\ref{cor:wf} with $f(x)=x^2$, we get that, for any $n$ large enough,
\begin{align*}
\var_\o(\xi_n)= \var_\nu(\xi_1)
+ O(e^{-n^\a}).
\end{align*}
Hence
\begin{gather*}
\lim_{t\to \infty} \frac 1t \left[\sum_{n=1}^{N_t} \var_\o(\xi_n) +
  \var_\o(X(\o(t))-X(\o(t_N)))\right]
=\D^{-1}\var_\nu(\xi_1).
\end{gather*}
Moreover, \eqref{eq:13} implies that
\begin{gather*}
\lim_{t\to \infty} \frac 2t \Bigg[\sum_{j<n}^{N_t} {\cov}_\o(\xi_j,\xi_n) +
 \sum_{n=1}^{N_t} {\cov}_\o(\xi_n,X(\o(t))-X(\o(t_N)))\Bigg]
\\=\frac{2}{\D}\sum_{n\ge 2}{\cov}_\nu\left(\xi_1,\xi_{n}\right),
\end{gather*}
the series being absolutely convergent because of \eqref{eq:16}.
In conclusion, for any $\o\in \O_\Front$
\begin{gather}
\label{eq:18}
\lim_{t\to \infty}
\frac 1t \var_\o\left(X(\o(t))\right)=\D^{-1}\Bigg[\var_\nu(\xi_1)+2\sum_{n\ge 2}{\cov}_\nu\left(\xi_1,\xi_{n}\right)\Bigg].
\end{gather}
Next we show that for $p$ small enough the r.h.s.\ of \eqref{eq:18} is positive.
We first observe that there exists $c=c(p)$ such that $\limsup_{p\to
  0^+}c(p)<\infty$ and
\begin{equation}
  \label{eq:facile}
\sup_{\D}\sum_{n\ge 2}|{\cov}_\nu\left(\xi_1,\xi_{n}\right)|\le c(p).
\end{equation}
To prove \eqref{eq:facile} assume without loss of generality that
$\D\in \bbN$ and write $\xi_1= \sum_{i=1}^\D \xi_i'$
and $\xi_{n}=\sum_{i=(n-1)\D+1}^{n\D} \xi_i'$, where the increments $\xi_i'\,$'s
refer to a unit time lag. Thus
\[
\sum_{n\ge 2}|{\cov}_\nu\left(\xi_1,\xi_{n}\right)| \le
\sum_{n\ge 2}\sum_{i=1}^\D\sum_{j=(n-1)\D+1}^{n\D}|{\cov}_\nu\left(\xi'_i,\xi'_{j}\right)|
\]
The claim now follows from \eqref{eq:13} together with the fact that
the constants $\a,v^*$ are uniformly bounded away from zero as $p\to
0$.

Thus, in order to show
that the r.h.s.\ of \eqref{eq:18} is positive, it is enough to show that
it is possible to choose $\D$ and $p$ such that $
\var_\nu\left(\xi_1\right) > \limsup c(p)$.

Recall that $q^*=\nu(\o_{-1}=0)$. Then a little computation shows that
\begin{align}
\frac{d}{dt} \var_\nu\left(X(\o(t))\right)&=q+pq^*-2p\,{\cov}_\nu\left(X(\o(t)),{\mathds
    1}_{\{\o(t)\in \O^{**}\}}\right)\nonumber\\
&\ge q+pq^*-2p
\var_\nu\left(X(\o(t))\right)^{1/2}\left(q^*(1-q^*)\right)^{1/2}\\
\label{eq:17}&\ge q+pq^*-p\var_\nu\left(X(\o(t))\right)^{1/2},
\end{align}
where $\O^{**}=\{\o\in \O^*:\ \o_{X(\o)-1}=0\}$.

If
$\left[\var_\nu\left(\xi_1\right)\right]^{1/2}\le  \frac{q+pq^*}{2p}$
for all $\D>0$, then \eqref{eq:17} implies that
\[
\lim_{\D\to \infty}\var_\nu\left(\xi_1\right)=\infty.
\]
Otherwise there exists $\D>0$ such that
$
\left[\var_\nu\left(\xi_1\right)\right]^{1/2}\ge
\frac{q+pq^*}{2p}
$;
hence, the desired inequality~\eqref{th1.3} follows by taking $p$ small enough.

It remains to prove~\eqref{th1.4}. If $\s^*=0$, then necessarily
\[
\sup_\D \var_\n\left(\xi_1\right)<\infty.
\]
In this case the Chebyshev
inequality suffices to prove that, for any $\o\in \O_\Front$,
\[
(X(\o(t))-vt)/\sqrt{t}\ \stackrel{\bbP_\o}{\longrightarrow}\ 0, \quad
\text{ as }t\to \infty.
\]
If instead $\s^*>0$, we appeal to an old result on the central limit
theorem for mixing stationary random fields \cite{Bolthausen}. Unfortunately our mixing
result, as expressed e.g.\ in Corollary~\ref{cor:wf} (cf.~\eqref{eq:21}), is not exactly
what is needed there and we have to go through some of the steps of
\cite{Bolthausen} to prove the sought statement.

Consider the sequence $\{\xi_j\}$ defined above (with e.g.\ $\D=1$) and let $\bar \xi_j:= \xi_j- v\D$.  Further let
$S_n=\sum_{j=1}^n \bar \xi_j$.
It suffices to prove that, for all
$\o\in \O_\Front$, the law of $S_n/\s_* \sqrt{n}$ converges to the normal
law $\cN(0,1)$. As in \cite{Bolthausen} let $f_N(x)=\max\left[\min(x,N),-N\right]$ and
let $\tilde f_N(x):=x-f_N(x)$. Clearly $\var(\tilde f_N(\bar \xi_j))\rightarrow 0$ as $N \rightarrow \infty$ uniformly in $j.$

Then Corollary~\ref{cor:wf} \eqref{eq:13fabio}  implies that
\[
\bbE_\o\left[\frac{\sum_{j=1}^n \tilde f_N(\bar \xi_j)-\bbE_\o[\tilde
    f_N(\bar \xi_j)]}{n^{1/2}}\right]^2=\frac 1n \sum_{j,k=1}^n {\cov}_\o\left(\tilde f_N(\bar \xi_j),\tilde
  f_N(\bar \xi_k)\right)
\]
converges to $0$ as $N\to \infty$ uniformly in $n$. Hence it is enough
to prove the result for the truncated variables $f_N(\bar \xi_j)$. For
lightness of notation we assume henceforth that the $\bar\xi_j$'s are bounded.

Let now $\ell_n= n^{1/3}$ and let
\begin{align*}
S_{j,n}= \sum_{k=1}^n{\mathds 1}_{|k-j|\le \ell_n}\bar\xi_k, \quad
\a_n= \sum_{j=1}^n\bbE_\o\left[\bar \xi_jS_{j,n}\right],\quad j\in \{1,\dots,n\}.
\end{align*}
The decay of covariances \eqref{eq:13} implies that $\a_n=
\var_\o(S_n)+ o(1)$. Hence it is enough to show that $S_n/\sqrt{\a_n}$
is asymptotically normal. The main observation of \cite{Bolthausen},
in turn inspired by the Stein method \cite{Stein}, is that the latter property
of $S_n/\sqrt{\a_n}$ follows if
\begin{equation}
  \label{eq:190}
\lim_{n\to \infty} \bbE_\o\left[(i\l-S_n)e^{i\l
    \frac{S_n}{\sqrt{\a_n}}}\right]=0, \quad \forall \l\in \bbR.
\end{equation}
In turn \eqref{eq:190} follows if (see \cite{Bolthausen}*{Eqs.~(4)--(5)})
\begin{align}
  \label{eq:19bis}
  \lim_{n\to \infty} \bbE_\o\Bigl[\bigl(1-\frac{1}{\sqrt{\a_n}}\sum_{j=1}^n\bar\xi_j
    S_{j,n}\bigr)^2\Bigr]&=0\\
    \label{eq:19tris}
\lim_{n\to \infty} \frac{1}{\sqrt{\a_n}} \bbE_\o\Bigl[\Big|\ \sum_{j=1}^n \bar\xi_j\bigl(1-e^{-i\l
      \frac{S_n}{\sqrt{\a_n}}}-i\l S_{j,n}\bigr)\Big|\Bigr]&=0\\
  \label{eq:19quatris}\lim_{n\to \infty}
   \frac{1}{\sqrt{\a_n}}\sum_{j=1}^n\bbE_\o\Bigl[\bar \xi_j\, e^{i\l
       \frac{(S_n-S_{j,n})}{\sqrt{\a_n}}}\Bigr] &=0.
\end{align}
 As in \cite{Bolthausen}, the mixing properties \eqref{eq:13} and
 \eqref{eq:21} easily prove that \eqref{eq:19bis} and
 \eqref{eq:19tris} hold.
 As far as \eqref{eq:19quatris} is concerned the formulation of Theorem~\ref{coupling} forces us to argue a bit differently than
\cite{Bolthausen}. We first observe that, using the boundedness of the variables $\bar \xi_j$'s, \eqref{eq:19quatris}
is equivalent to
 \begin{equation}
   \label{eq:20bis}
   \lim_{n\to \infty}
   \frac{1}{\sqrt{\a_n}}\sum_{j=\ell_n}^n\bbE_\o\left[\bar \xi_j\, e^{i\l
       \frac{(S_n-S_{j,n})}{\sqrt{\a_n}}}\right]=0,
\quad \forall \l\in \bbR.
 \end{equation}
Fix two numbers $M$ and $L$ with $L\le M/10$ (eventually they will be
chosen logarithmically increasing in $n$)
 and write
\begin{align*}
e^{i\l \frac{(S_n-S_{j,n})}{\sqrt{\a_n}}}&= \sum_{m=0}^M
\frac{(i\l)^m}{m!}\left(\frac{(S_n-S_{j,n})}{\sqrt{\a_n}}\right)^m \\ &+ \sum_{m=M+1}^\infty
\frac{(i\l)^m}{m!}\left(\frac{(S_n-S_{j,n})}{\sqrt{\a_n}}\right)^m{\mathds
1}_{\{| \frac{(S_n-S_{j,n})}{\sqrt{\a_n}}|\le L\}}
\\ &+ \left[e^{i\l \frac{(S_n-S_{j,n})}{\sqrt{\a_n}}}- \sum_{m=0}^M
\frac{(i\l)^m}{m!}\left(\frac{(S_n-S_{j,n})}{\sqrt{\a_n}}\right)^m\right]{\mathds
1}_{\{| \frac{(S_n-S_{j,n})}{\sqrt{\a_n}}|> L\}}\\
&=: \ Y^{(j)}_1+Y^{(j)}_2+Y^{(j)}_3.
\end{align*}
Let us first examine the contribution of $Y^{(j)}_2$ and $Y^{(j)}_3$ to
the covariance term \eqref{eq:20bis}.
Using the boundedness of the variables $\{\bar \xi_j\}_{j=1}^n$ there exists a positive constant $c$ such that:
\begin{align*}
\frac{1}{\sqrt{\a_n}}\sum_{j=\ell_n}^n|\bbE_\o\left[\bar
  \xi_j\,Y^{(j)}_2\right]| &\le c \,\sqrt{n} \ \frac{L^{M+1}}{M!},\\
\frac{1}{\sqrt{\a_n}}\sum_{j=\ell_n}^n|\bbE_\o\left[\bar
  \xi_j\,Y^{(j)}_3\right]|&\le c\,\sqrt{n}\max_j \bbE_\o\left[e^{2|\l|
    \frac{|S_n-S_{j,n}|}{\sqrt{\a_n}}}\right]^{1/2}
\bbP_\o\left(| \frac{(S_n-S_{j,n})}{\sqrt{\a_n}}|> L\right).
\end{align*}
\begin{lemma}
\label{large-dev} There exists $c>0$ such that,
for all $n$ large enough and any $\b=O(\log n)$,
  \begin{equation}
    \label{eq:22}
    \bbE_\o\left[e^{\b\frac{|S_n-S_{j,n}|}{\sqrt{\a_n}}}\right] \le
   2 e^{c \b^2}.
  \end{equation}
Moreover, there exists $c'>0$ such that, for all $n$ large enough and
all $L\le \log n$,
\begin{equation}
  \label{eq:23}
  \bbP_\o\left(| \frac{(S_n-S_{j,n})}{\sqrt{\a_n}}|> L\right) \le
  e^{-c' L^2}.
\end{equation}
\end{lemma}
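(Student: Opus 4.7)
The first step is to observe that \eqref{eq:23} follows from \eqref{eq:22} by a standard Chernoff argument. Indeed, for any $\b>0$ Markov's inequality gives
\[
\bbP_\o\bigl(|S_n-S_{j,n}|/\sqrt{\a_n}>L\bigr)\le e^{-\b L}\,\bbE_\o\bigl[e^{\b|S_n-S_{j,n}|/\sqrt{\a_n}}\bigr]\le 2e^{c\b^2-\b L},
\]
and the choice $\b=L/(2c)$, which is $O(\log n)$ when $L\le \log n$, yields \eqref{eq:23} with $c'=1/(4c)$. Hence the whole content of the lemma lies in \eqref{eq:22}.

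To prove \eqref{eq:22}, recall that after the truncation performed earlier in the CLT argument the variables $\bar\xi_j$ satisfy $|\bar\xi_j|\le K$ for a constant $K$. Split $S_n-S_{j,n}=U^-+U^+$, where $U^\pm$ denotes the sum of $\bar\xi_k$ over indices lying to the left/right of the excluded window $\{k:|k-j|\le \ell_n\}$; by Cauchy--Schwarz it suffices to bound the exponential moment of $U^\pm$ separately. I would then employ a classical Bernstein block decomposition on, say, $U^+$: partition the contributing indices into alternating ``big'' blocks $B_1,\ldots,B_r$ of length $b:=\lceil(\log n)^2\rceil$ and ``small'' gaps of length $g:=\lceil(\log n)^{3/\a}\rceil$, with $r=\Theta(n/(b+g))$. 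The deterministic contribution of the $rgK=O(n/\log n)$ gap indices, divided by $\sqrt{\a_n}=\Theta(\sqrt n)$, is $o(1)$ in the exponent for $\b=O(\log n)$ and is absorbed. For the block sums $T_i:=\sum_{k\in B_i}\bar\xi_k$, iterating the estimate~\eqref{eq:21} of Corollary~\ref{cor:wf} conditionally on $\cF_{t_{k_i}}$ at each block boundary replaces $\bbE_\o\bigl[\prod_i e^{\b T_i/\sqrt{\a_n}}\bigr]$ by $\prod_i\bbE_\nu\bigl[e^{\b T_i/\sqrt{\a_n}}\bigr]$ up to a negligible additive error. Each factor is then controlled by Hoeffding/Bennett applied to the bounded, centered variable $T_i$: the summable covariance bound \eqref{eq:13fabio} yields $\var_\nu(T_i)=O(b)$, hence $\bbE_\nu[e^{\b T_i/\sqrt{\a_n}}]\le \exp(c\b^2 b/\a_n)$. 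Since $rb\le n=\Theta(\a_n)$, multiplying over the $r$ blocks gives the required bound $\exp(c\b^2)$.

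The main obstacle will be the bookkeeping of the iterative mixing error. Each application of \eqref{eq:21} comes with a multiplicative error proportional to $\|F\|_\infty\cdot e^{-\g g^\a}$ per step, and for $F=\exp(\b T_i/\sqrt{\a_n})$ one has $\|F\|_\infty\le e^{\b Kb/\sqrt{\a_n}}$. With the parameters above, $\b Kb/\sqrt{\a_n}=O((\log n)^3/\sqrt n)=o(1)$ while $\g g^\a\ge 3\log n$, so the total error after $r=O(n)$ steps is at most $n\cdot n^{-3}(1+o(1))=o(1)$, which is readily absorbed into the final constant. Once this parameter juggling is carried through, the rest of the argument is a routine application of the Bernstein block inequality for bounded, exponentially mixing stationary sequences.
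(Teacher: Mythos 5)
Your Chernoff derivation of \eqref{eq:23} from \eqref{eq:22} is fine and matches the paper, and the reduction to truncated variables and the desire to appeal to \eqref{eq:21} block by block are both in the right spirit. However, the Bernstein big-block/small-gap decomposition with logarithmic block sizes cannot work here, and the arithmetic in your proposal hides the problem.

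Since the mixing exponent $\a$ from Theorem~\ref{coupling} satisfies $\a\in(0,1)$, your gap length $g=\lceil(\log n)^{3/\a}\rceil$ strictly exceeds your block length $b=\lceil(\log n)^2\rceil$ (because $3/\a>3>2$). Consequently $r=\Theta(n/(b+g))=\Theta(n/g)$ and $rg=\Theta(n)$, not $O(n/\log n)$: the gaps carry a constant fraction of all indices, and the blocks carry only $rb=\Theta(nb/g)=o(n/\log n)$. The ``deterministic'' gap contribution is therefore $\b\,rgK/\sqrt{\a_n}=\Theta(\b\sqrt n)=\Theta(\sqrt n\log n)$ in the exponent, which blows up rather than being $o(1)$. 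Nor can this be repaired by re-tuning $b$ and $g$: the stretched-exponential error $e^{-\g g^\a}$ in \eqref{eq:21} forces $g\gtrsim(\log n)^{1/\a}$, so even with $b\gg g$ one gets $rg\gtrsim n/(\log n)^{O(1)}$, and dividing by $\sqrt{\a_n}\asymp\sqrt n$ and multiplying by $\b=\Theta(\log n)$ still leaves a quantity of order $\sqrt n$ up to logs. A deterministic bound on the gap sum discards the centering of the $\bar\xi_j$'s and can never produce the $O(\b^2)$ exponent; the classical Bernstein device of throwing away the small blocks requires genuinely exponential mixing (so that a gap of $O(\log n)$ mixes, and the total gap mass is $o(n)$), and that is exactly what fails when $\a<1$.

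The paper avoids the asymmetric big/small split entirely. It partitions $\{1,\dots,n\}$ into $n^{2/3}$ equal blocks of length $n^{1/3}$, and, for a fixed integer $\kappa$ chosen so that $c\kappa>v_{\rm max}$, applies Cauchy--Schwarz a bounded number of times (depending only on $\kappa$) to reduce the estimate to $\bbE_\o\bigl[\exp(\kappa\b S_\cB^{(\kappa)}/\sqrt{\a_n})\bigr]$, where $\cB$ is any collection of blocks with pairwise separation at least $\kappa$ blocks, i.e.\ at least $\kappa n^{1/3}$ indices. No index is ever discarded: every block lands in one of the $\kappa$ sub-collections. Because the separation between consecutive retained blocks is polynomial, the per-step error from \eqref{eq:21}, namely $O(e^{-\g(\kappa n^{1/3}\D)^\a}\,e^{\b n^{-1/6}})$, is super-polynomially small, and it is summed over only $|\cB|=O(n^{2/3})$ steps. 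Within a single block one Taylor-expands the exponential using $|Z_B/\sqrt n|=O(n^{-2/3})$ and $\var_\nu(Z_B)=O(n^{1/3})$ (which follows from \eqref{eq:13}), yielding $\bbE_\o[\exp(\b Z_B/\sqrt n)\mid\cF_{t_B}]\le 1+c\b^2/n^{2/3}$, and the Markov-property iteration over the $O(n^{2/3})$ blocks in $\cB$ gives the required $e^{c'\b^2}$. So the obstruction you face is real, and the block coloring via iterated Cauchy--Schwarz is the missing idea.
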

Assume for the moment the lemma and choose
$L=M/10$ and $M=\log n$. We can conclude that
\begin{gather*}
\frac{1}{\sqrt{\a_n}}\sum_{j=\ell_n}^n|\bbE_\o\left[\bar
  \xi_j\,(Y^{(j)}_2+Y^{(j)}_3)\right]|
\le
C\sqrt{n}\left[e^{-c'L^2}+ \frac{L^{M+1}}{M!}\right],
\end{gather*}
so that
\[
\lim_{n\to \infty}\frac{1}{\sqrt{\a_n}}\sum_{j=\ell_n}^n|\bbE_\o\left[\bar
  \xi_j\,(Y^{(j)}_2+Y^{(j)}_3)\right]|=0.
\]
We now examine the contribution of $Y^{(j)}_1$ to \eqref{eq:20bis}. Recall $$S_n-S_{j,n}=\sum_{\substack{1\le i \le n \\ |i-j|>\ell_n}} \bar
\xi_i.$$ Thus clearly,
\[
\frac{1}{\sqrt{\a_n}}\sum_{j=\ell_n}^n\bbE_\o\left[\bar
  \xi_j\,(Y^{(j)}_1)\right]=\frac{1}{\sqrt{\a_n}}\sum_{j=\ell_n}^n\sum_{m=1}^M
\left( \frac{i\l}{\sqrt{n}}\right)^m\sumtwo{i_1,\dots, i_m}{\min_k
   |i_k-j|\ge \ell_n}\bbE_\o\left[\bar \xi_j \prod_{i=k}^m \bar \xi_{i_k}\right],
 \]
where the labels $i_1,\dots i_m$ run in $\{1,2,\dots, n\}$.
\begin{lemma}
\label{covar2}
Let $M=\log n$. Then, for any $m\le M$, any $j\in\{\ell_n,\dots,n\}$ and any $\{i_1,\dots i_m\}$ satisfying $\min_k
   |i_k-j|\ge \ell_n\,$, it holds that
\[
|\bbE_\o\Bigl[\bar \xi_j \prod_{i=k}^m \bar \xi_{i_k}\Bigr]|=O(e^{-n^{\a/6}}).
\]
Here $\a$ is the mixing exponent appearing in Theorem~\ref{coupling}.
\end{lemma}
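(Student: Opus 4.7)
\emph{Proof plan for Lemma~\ref{covar2}.} The source of smallness is the identity $\bbE_\nu[\bar\xi_1]=0$: since $\nu$ is invariant for $\o^\Front$ and $q^*=\nu(\o_{-1}=0)$, the ODE used at the start of the proof of Theorem~\ref{th:main1} gives $\tfrac{d}{dt}\bbE_\nu[X(\o(t))]=q-pq^*=v$, so integrating over $[0,\D]$ yields $\bbE_\nu[\xi_1]=v\D$ and hence $\bbE_\nu[\bar\xi_1]=0$. Moreover, the truncation performed just above renders the $\bar\xi_k$'s bounded by some constant $N$, so the integrand has absolute value at most $N^{M+1}=N^{\log n+1}=O(n^{\log N})$; this polynomial factor will be absorbed by any stretched--exponential estimate on the mean.

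The first step is to split the product into past and future factors with respect to $j$: set $P:=\prod_{i_k\le j-\ell_n}\bar\xi_{i_k}$ and $F:=\prod_{i_k\ge j+\ell_n}\bar\xi_{i_k}$, so the quantity of interest equals $\bbE_\o[P\bar\xi_jF]$. Conditioning on $\cF_s$ with $s:=t_{j-\ell_n/2}$ and using the Markov property of $\o^\Front$ together with the $\cF_s$-measurability of $P$,
\[
\bbE_\o[P\bar\xi_jF]\;=\;\bbE_\o\!\left[P\,\Phi(\o^\Front(s))\right],\qquad \Phi(\o'):=\bbE_{\o'}\!\left[\bar\xi_{\ell_n/2}\!\!\prod_{i_k\ge j+\ell_n}\!\!\bar\xi_{\,i_k-(j-\ell_n/2)}\right].
\]
It therefore suffices to show $\|\Phi\|_\infty=O(\exp(-c\,n^{\a/3}))$ uniformly in $\o'\in\O_\Front$; the target bound $O(e^{-n^{\a/6}})$ then follows from $|P|=O(n^{\log N})$, since $n^{\a/3}$ dominates both $n^{\a/6}$ and $\log N\cdot\log n$.

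To bound $\Phi(\o')$ uniformly, I would perform a second Markov split at an intermediate time $t^\dagger:=t_{3\ell_n/4}$, which places $\bar\xi_{\ell_n/2}$ in $\cF_{t^\dagger}$ while the remaining future product is a function of the evolution after $t^\dagger$, separated by a gap of order $\ell_n/4$. Writing $\Phi(\o')=\bbE_{\o'}[\bar\xi_{\ell_n/2}\,\psi(\o^\Front(t^\dagger))]$ with $\|\psi\|_\infty\le N^{\log n}$ and decomposing as product of means plus a covariance, the product of means is controlled by Corollary~\ref{cor:wf}~\eqref{eq:20tris} applied with $f(x)=x$ together with $\bbE_\nu[\bar\xi_1]=0$, giving $|\bbE_{\o'}[\bar\xi_{\ell_n/2}]|=O(e^{-\g(\ell_n/2)^\a})$ uniformly in $\o'$. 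The hard part will be the covariance $\cov_{\o'}(\bar\xi_{\ell_n/2},\psi(\o^\Front(t^\dagger)))$: since both factors depend on the whole initial configuration $\o'$, Theorem~\ref{coupling} cannot be invoked directly (its error bound is only valid on a window $[-v^*t,0]$ near the front). The plan is to first localize each of the two factors to a window of width $O(\ell_n)$ behind the front at their respective times via finite speed of propagation (Lemma~\ref{finitespeed} and Remark~\ref{rem:speed}), at the cost of an exceptional event of probability $O(e^{-\ell_n})$; on the complementary event the two localized functionals depend on disjoint space-time regions separated by the $\ell_n/4$ gap, so a conditional version of Theorem~\ref{coupling} combined with Cauchy--Schwarz (using the $L^2$ bound \eqref{eq:20}) decouples them with error $O(e^{-c'(\ell_n/4)^\a})$, completing the uniform bound on $\Phi$.
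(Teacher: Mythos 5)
Your opening reductions track the paper's argument well: the observation that $\bbE_\nu[\bar\xi_1]=0$, the note that the truncation (just performed) makes the integrand polynomially bounded (of order $n^{\log N}$), the past/future split $P\cdot\bar\xi_j\cdot F$ together with conditioning on $\cF_s$ at $s=t_{j-\ell_n/2}$ to reduce to a uniform bound on $\Phi$, and the product-of-means bound $|\bbE_{\o'}[\bar\xi_{\ell_n/2}]|=O(e^{-\g(\ell_n/2)^\a})$ via~\eqref{eq:20tris} with $f(x)=x$. That last step is essentially the paper's base case ($i_m\le j-\ell_n$). The gap is entirely in the covariance step, and it is a real one.

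The localization you invoke for $\psi(\o^\Front(t^\dagger))$ does not hold when the future indices are spread out. The future product $F'$ can involve indices up to $n-(j-\ell_n/2)$, hence a time horizon of order $n$, while $t^\dagger$ is only of order $\ell_n=n^{1/3}$. By finite speed the front at time $t_{i_m'}$ is influenced by $\o''=\o^\Front(t^\dagger)$ through a window of width $O\bigl(v_{\rm max}(t_{i_m'}-t^\dagger)\bigr)=O(n)$, so $\psi(\o'')$ genuinely depends on coordinates of $\o''$ at distance $\Theta(n)$ behind the origin; it cannot be replaced by a function of $\o''_{[-C\ell_n,0]}$ at cost $O(e^{-\ell_n})$. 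For the same reason, the conditional version of Theorem~\ref{coupling} you want to apply is out of range: that theorem only controls $\mu^{t^\dagger}_{\o}$ on $[-v^*t^\dagger,0]$, and the linking-event estimate underlying both it and~\eqref{eq:21} requires the spatial cushion $v^*t^\dagger$ to dominate $v_{\rm max}\cdot(\text{span of }F')$, which fails when the span is $\Theta(n)$. Cauchy--Schwarz cannot rescue this, since neither $\var_{\o'}(\bar\xi_{\ell_n/2})$ nor $\var_{\o'}(\psi)$ is small. So your one-shot decoupling covers only the case where the future indices cluster within $O(\ell_n)$ of one another; it does not extend to the general case.

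The paper avoids this by a recursive Markov decomposition adapted to the index geometry. It orders the indices, and if they straddle $j$ it looks for the last large temporal gap (threshold $n^{1/6}$) among the future indices, applies the Markov property at that gap, factors out the terminal tight cluster and replaces it by its $\nu$-expectation via~\eqref{eq:21} (whose applicability hinges on the gap dominating the span of the factored cluster, which is why gaps at scale $n^{1/6}$ are compared with the burn-in $\ell_n=n^{1/3}$), and then iterates on what remains. After finitely many iterations (at most $m\le\log n$ of them) it reduces to your base case $i_m\le j-\ell_n$. This iteration over gaps is the missing mechanism: without it, a spread-out future product cannot be decoupled from $\bar\xi_{\ell_n/2}$ by the tools at hand.
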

Assuming the lemma we get immediately that also
\[
\lim_{n\to \infty}\frac{1}{\sqrt{\a_n}}\sum_{j=\ell_n}^n \bbE_\o\left[\bar
  \xi_j\,(Y^{(j)}_1)\right]=0
\]
and \eqref{eq:20bis} is established. In conclusion, \eqref{th1.4} would follow from Lemmas~\ref{large-dev}--\ref{covar2}.

\begin{proof}[Proof of Lemma~\ref{large-dev}]
Let us begin with \eqref{eq:22}. For simplicity we prove that,
for any constant $\b=O(\log n)$, $\bbE_\o\left[\exp(\b
  S_n/\sqrt{n})\right]\le e^{c\b^2}$ for some constant $c>0$.
Similarly one could proceed for $\bbE_\o\left[\exp(-\b
  S_n/\sqrt{n})\right]$ and get that
\[
\bbE_\o\left[\exp(\b |S_n|/\sqrt{n})\right]\le \bbE_\o\left[\exp(\b
  S_n/\sqrt{n})\right] +
\bbE_\o\left[\exp(-\b S_n/\sqrt{n})\right]\le 2 e^{c\b^2}.
\]
We partition the discrete interval $\{1,2,\dots, n\}$ into disjoints blocks of
cardinality $n^{1/3}$. Given a integer $\kappa$, by applying the Cauchy-Schwarz inequality a
finite number of times depending on $\kappa$, it is sufficient to prove the result for
$S_n$ replaced by the sum $S^{(\kappa)}_{\cB}$ of the $\bar\xi_j$'s
restricted to an arbitrary collection $\cB$ of blocks with the property that
any two blocks in $\cB$ are
separated by at least $\kappa$ blocks.

Fix one such collection $\cB$ and let $B$ be the rightmost block
in $\cB$. Let $n_B$ be the largest label in $\cB$ which is not in the
block $B$ and let
$t_B=n_B\D$ be the corresponding time. Further let $Z_B=\sum_{j\in
  B}\bar\xi_j$. If $c\kappa>v_{\rm max}$ where $c$ is the constant
appearing in Theorem~\ref{coupling}, we can appeal to \eqref{eq:21} to
obtain
\[
\bbE_\o\left[\exp(\b Z_B/\sqrt{n})\tc \cF_{t_B}\right]= \bbE_\nu\left[\exp(\b
  Z_B/\sqrt{n})\right]+ O(e^{-n^{\a/3}}e^{\b n^{-1/6}}).
\]
Using the trivial bound
$Z_B/\sqrt{n}= O(n^{-2/3})$ we have
\[
\bbE_\nu\left[\exp(c Z_B/\sqrt{n})\right] = 1 +\frac{\b^2}{2n}\var_\nu(Z_B) + O(\b^3
n^{-7/6})\var_\nu(Z_B),
\]
where $\var_\nu(Z_B)= O(n^{1/3})$ thanks to \eqref{eq:13}. Above we used
the trivial bound
\[
\bbE_\nu\left[|Z_B|^3\right]\le c\, n^{1/3}\var_\nu(Z_B).
\]
In conclusion, using the apriori bound $\b\le \log n$, we get that
\[
 \bbE_\o\left[\exp(\b Z_B/\sqrt{n})\tc \cF_{t_B}\right] \le 1+c \frac{\b^2}{n^{2/3}}.
\]
The Markov property and a simple iteration imply that,
\[
\bbE_\nu\left[\exp\big(\b S^{(\kappa)}_{\cB}/\sqrt{n}\big)\right]\le \left[1+c
  \frac{\b^2}{n^{2/3}}\right]^{|\cB|}\le \exp(c' \b^2),
\]
uniformly in the cardinality $|\cB|$ of the collection. The bound
\eqref{eq:22} is proved.

The bound \eqref{eq:23} follows at once from \eqref{eq:22} and
the exponential Chebyshev inequality
\[
  \bbP_\o\left(| \frac{(S_n-S_{j,n})}{\sqrt{\a_n}}|> L\right) \le
e^{-\b L}\,\bbE_\o\left[\exp(| \frac{(S_n-S_{j,n})}{\sqrt{\a_n}}|)\right],
\]
with $\b=\e L$,  $\e$ being a
sufficiently small constant.
\end{proof}
\begin{proof}[Proof of Lemma~\ref{covar2}] Fix $j\in [1,\dots,n]$ and $m\le \log n$, together with
a choice of labels
$1\le i_1\le i_2 \le\dots \le i_m\le n$ such that $\min_k|i_k-j|\ge
\ell_n$. Let $t_{i_k}=i_k\D$. \\
$\bullet$ If $i_m\le j-\ell_n$ then we can apply the Markov property at time
$t_{i_m}$
together with Corollary~\ref{cor:wf} to get
\[
\bigg|\bbE_\o\Bigl[\bar \xi_j \prod_{i=k}^m \bar \xi_{i_k}\Bigr]\bigg|\le
e^{-n^{\a/3}}\bbE_\o\Bigl[\prod_{i=k}^m |\bar \xi_{i_k}|\Bigr]\le
c^{m}e^{-n^{\a/3}}.
\]
$\bullet$ If instead there exists $b\le m-1$ such that $i_b < j < i_{b+1}$
we need to distinguish between two sub-cases.

(a) For all $k\ge b+2$ it holds that
$i_k-i_{k-1}\le n^{1/6}$ and in particular
$t_m-t_{b+1}\le n^{1/6}\D$. In this case the fact that
$t_{b+1}-j\D \ge \ell_n$ and
$v_{\rm max}(t_m-t_{b+1})\ll \ell_n$ together with \eqref{eq:21}, imply that
\[
\bbE_\o\Bigl[\bar \xi_j \prod_{i=k}^m \bar \xi_{i_k}\Bigr]=
\bbE_\o\Bigl[\bar \xi_j \prod_{i=k}^{b} \bar
  \xi_{i_k}\Bigr]\left[\bbE_\nu\Bigl[\prod_{i=b+1}^{m} \bar
    \xi_{i_k}\Bigr]+
O\Bigl(e^{-n^{\a/3}}{\rm poly}(n)\Bigr)\right].
\]
The conclusion of the lemma then follows from the previous case
$i_m\le j-\ell_n$.

(b) We now assume that $k^*:=\max\{k\ge b+1:\ i_{k+1}\ge
  i_{k}+n^{1/6}\}<n$. By repeating the previous step with the
  Markov property applied at time
  $t_{i_{k^*}}$ we get
 \[
\bbE_\o\Bigl[\bar \xi_j \prod_{i=k}^m \bar \xi_{i_k}\Bigr]=
\bbE_\o\Bigl[\bar \xi_j \prod_{i=k}^{k^*} \bar
  \xi_{i_k}\Bigr]\left(\bbE_\nu\Bigl[\prod_{i=k^*+1}^{m} \bar
    \xi_{i_k}\Bigr]+
O\Bigl(e^{-n^{\a/3}}{\rm poly}(n)\Bigr)\right).
\]
By iterating the above procedure we can reduce ourselves to case (a)
and get the sought result.
\end{proof}
As Lemmas~\ref{large-dev}--\ref{covar2} imply~\eqref{th1.4}, this concludes the proof of Theorem~\ref{th:main1}.
\qed

\begin{remark}\label{rem:sigma*-small-p}
The above proof also established that the limiting variance
$\s_*^2$ is strictly positive for all $p$ small enough.
\end{remark}

\subsection{Proof of Theorem~\ref{th:main2}}
\label{East-cutoff}
Given the interval $\L=[1,\dots,L]$ and $\o\in \O_\L$, let $\bbP^{\L,t}_{\o}$ be the law of
the process started from $\o$. Recall that
\[
\|\bbP^{\L,t}_{\o}-\bbP^{\L,t}_{\o'}\|=\inf\{M(\o(t)\neq \o'(t)):\ M \text{ a coupling of } \bbP^{\L,t}_{\o} \text{ and } \bbP^{\L,t}_{\o'}\},
\]
and introduce the hitting time
\[
\tau(L)=\inf\{t: X(\o(t))=L\},
\]
where the initial configuration is identically equal to one (in the
sequel $\mathbf 1$). It
is easy to check (see, e.g.,~\cite{East-survey}) that at time $\t(L)$
the basic coupling (cf.~\S\ref{setting-notation}) has coupled all initial
configurations.
Thus
\[
d_{\TV}(t)\le \sup_{\o,\o'}\|\bbP^{\L,t}_{\o}-\bbP^{\L,t}_{\o'}\|\le
\bbP^{\L}(\tau(L)> t).
\]
Using the graphical construction, up to time $\t(L)$  the East process in $\L$ started from the configuration
$\mathbf{1}$ coincides with the infinite East process started from the
configuration $\o^*\in \O_\Front$ with a single zero at the
origin. Therefore
\[
\bbP^{\L}_{\mathbf{1}}(\tau(L)> t) \le \bbP_{\o^*}(X(\o(t))<L),
\]
thus establishing a bridge with Theorem~\ref{th:main1}. Recall now the
definition of $\s_*$ from Theorem~\ref{th:main1} and distinguish
between the two cases $\s_*>0$ and $\s_*=0$.

\smallskip\noindent$\bullet$
The case $\s_*>0$. Here we will show that
\begin{equation}
  \label{eq-tmix(epsilon)-sigma*>0}
  \tmix(L,\epsilon)= v^{-1}L  + (1+o(1))\frac{\sigma_{*}}{v^{3/2}}\Phi^{-1}(1-\eps)\, \sqrt{L}\,.
\end{equation}
For $s\in \bbR$, let $t_\star=L/v+s\sqrt{L}$. Then \eqref{th1.3} implies that
\[
\bbP_{\o^*}\left(X(\o({t_\star}))<L\right)= \bbP_{\o^*}\Bigl(
\frac{X(\o(t_\star))-vt_\star}{\sqrt{L/v}}< -v^{3/2} s\Bigr)\rightarrow
\Phi\Bigl(-\frac{v^{3/2}s}{\sigma_{*}}\Bigr)
\]
as $L\to \infty.$
Hence,
\begin{equation}\label{sup1}
\limsup_{L\to \infty} d_{\TV}(L/v+s\sqrt{L}) \le \Phi\Bigl(-\frac{v^{3/2}s}{\sigma_{*}}\Bigr).
\end{equation}
To prove a lower bound on the total variation norm, set $a_L=\log L$ (any diverging sequence which is $o(\sqrt{L})$ would do here)
 and define the event $$A_t=\bigl(\o_x(t)=1 \text{ for all }x\in{(L-a_L,L]}\bigr).$$
Then
$$
\bbP^\L_{\mathbf{1}}(A_t)\ge \bbP_{\o^*}\left(X(\o(t))\le L-a_L\right) \quad\text{and}\quad
\pi(A_t)=p^{a_L} = o(1),
$$
and so any lower bound on $\bbP_{\o^*}(X(\o(t_\star))\le L-a_L)$ would translate to a lower bound on $d_\TV(t_\star)$ up to an additive $o(1)$-term.
Again by \eqref{th1.3},
\[ \bbP_{\o^*}\Bigl(X(\o(t_\star))\le L-a_L\Bigr)=\bbP_{\o^*}\biggl(
\frac{X(\o(t_\star))- vt_\star}{\sqrt{L/v}}\le -v^{3/2}s -a_L\sqrt{v/L}\  \biggr)\rightarrow \Phi\Bigl(-\frac{v^{3/2}s}{\sigma_{*}}\Bigr)\]
as $L\to \infty.$
Thus we conclude that
\begin{equation}\label{inf1}
\liminf_{L\to \infty} d_{\TV}(L/v+s\sqrt{L}) \ge \Phi\Bigl(-\frac{v^{3/2}s}{\sigma_{*}}\Bigr).
\end{equation}
Eq.~\eqref{eq-tmix(epsilon)-sigma*>0} now follows from \eqref{sup1} and \eqref{inf1} by choosing $s=\s_* v^{-3/2}\Phi^{-1}(1-\e).$

\smallskip\noindent$\bullet$
The case $\s_*=0.$  Here a similar argument shows that
\[ \tmix(L,\epsilon)= v^{-1} L + O_\epsilon(1),\]
using the fact (following the results in \S\ref{sec:front}) that
$\sup_{\o}\sup_{t}\var_{\o}(X(\o(t))) < \infty$ if $\s_*=0$.

\smallskip
This concludes the proof of Theorem~\ref{th:main2}.\qed
\section{Cutoff and concentration for constrained models on
  trees}\label{sec:trees}
In this section we consider constrained oriented models on
regular trees and prove strong concentration results for hitting
times which are the direct analog of the hitting time $\t(L)$ define in \S\ref{East-cutoff} for the East process. As a
consequence we derive a strong cutoff result for the ``maximally
constrained model'' (see below).
\subsection{Kinetically constrained models on trees} Let $\bbT$ be the $k$-ary rooted tree, $k\ge 2$, in which each vertex
$x$ has $k$ children. We will denote by $r$ the root and by $\bbT_L$
the subtree of $\bbT$ consisting of the first $L$-levels starting
from the root.

In analogy to the East process, for a given integer $1\le j\le k$
consider the constrained oriented
  process OFA-jf on $\O=\{0,1\}^{\bbT}$ (cf.~\cite{MT}) in which
  each vertex waits an independent mean one exponential
time and then, provided that $j$ among its children are in state
$0$, updates its spin variable $\o_x$ to $1$ with probability $p$
and to $0$ with probability $q=1-p$. It is known that this process exhibits an ergodicity breakdown above a certain critical probability $p=p_c(k,j)$ (defined more precisely later).
In this paper we will only
examine the two extreme cases $j=1$ and $j=k$ which will be referred to
in the sequel as the \emph{minimally} and \emph{maximally} constrained
models.

The finite volume version of the
OFA-jf process is a continuous time Markov chain
on $\O_{\bbT_L}=\{0,1\}^{\bbT_L}$. In this case, in order to guarantee
irreducibility, the variables at leaves of $\bbT_L$ are assumed to be
unconstrained. As in the case of the East process, the product Bernoulli$(p)$
measure $\pi$ is the unique reversible measure and the same graphical
construction described in \S\ref{setting-notation} holds in this new context.
\subsection{New Results} We are now in a position to state our
results for the minimally and maximally constrained finite volume OFA-jf models. Recall that
\[
\tmix(L,\e) := \inf\{t:\ \max_{\o\in \O_{\bbT_L}}\|\mu^t_\o-\pi\|\le
\e\},\quad \e\in (0,1)
\]
and define $T_{\rm hit}(L):=\bbE\left[\t(L)\right]$,
where $\t(L)$ is the first legal ring for the root for the OFA-jf
process on $\O_{\bbT_L}$ started from the configuration identically equal
to one.
Our first result addresses the concentration of $\t(L)$. Recall that $O_\delta(\cdot)$ denotes that the implicit constant may depend on $\delta$.
\begin{theorem}
\label{th:main3}
The following hold for the centered variable
$\t(L)-T_{\rm hit}(L)$, denoted $\bar \t(L)$.
\begin{enumerate}[(i)]
\item Consider either the minimally or the maximally constrained model
  and fix $p<p_c$. For any fixed $\d>0$, if $n\in\bbN$ is large enough there exists $L_n\in [n,(1+\d)n]$ such that
\[
\bbE|\bar \t(L_n)| = O_\delta(1).
\]
\item Consider the maximally constrained model and choose
  $p=p_c$. For
  any fixed $\d>0$, if $n\in\bbN$ is large enough then there exists $L_n\in [n,(1+\d)n]$ such that
\[
\bbE|\bar \t(L_n)| = O_\delta\left(L_n^{-1}\,  T_{\rm hit}(L_n)\right).
\]
\end{enumerate}
\end{theorem}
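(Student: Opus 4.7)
The plan is to exploit the natural recursion satisfied by $\tau(L)$ together with a pigeonhole argument over the window $[n,(1+\delta)n]$. Let $T_L := T_{\rm hit}(L)$, and for $1 \le i \le k$ let $\tau^{(i)}(L-1)$ denote the analogous hitting time for the subtree of depth $L-1$ rooted at the $i$-th child of the root, using independent graphical-construction data. Since updates at the root do not affect the children subtrees, one obtains a pathwise identity of the form
\[
\tau(L) \;=\; \Phi_L\bigl(\tau^{(1)}(L-1),\dots,\tau^{(k)}(L-1)\bigr) + \xi_L,
\]
with $\Phi_L = \max$ for the maximally constrained model and $\Phi_L = \min$ for the minimally constrained one, and $\xi_L \ge 0$ the further delay until the root experiences a Poisson ring at which its children already realize the required zero-configuration. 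The $\tau^{(i)}(L-1)$ are i.i.d.\ copies of $\tau(L-1)$, but $\xi_L$ is \emph{not} independent of them, which is precisely the source of difficulty flagged in the remark preceding Theorem~\ref{th:main3}.

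I focus first on the maximally constrained model, which is the core content of (ii) and the harder half of (i). The key inequality is
\[
T_L - T_{L-1} \;\ge\; c_k\,\sqrt{\var(\tau(L-1))}
\]
for some constant $c_k>0$ and all $L$ large enough. This follows from three ingredients: (a) taking expectations in the recursion and using $\xi_L\ge 0$ gives $T_L \ge \bbE[\max_i \tau^{(i)}(L-1)]$, which conveniently sidesteps the dependence of $\xi_L$ on the maximum; (b) the standard mean-shift bound $\bbE[\max_i X_i] - \bbE[X_1] \ge c_k \sqrt{\var(X_1)}$ for $k\ge 2$ i.i.d.\ copies, valid under mild anti-concentration; and (c) uniform anti-concentration of $\tau(L-1)$, which I would extract inductively from the exponential Poisson ring at the root.

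Granted the above, a pigeonhole argument concludes the proof. Summing over $L \in [n{+}1,(1+\delta)n]$ and using monotonicity of $T_L$,
\[
c_k \sum_{L=n+1}^{(1+\delta)n} \sqrt{\var(\tau(L-1))} \;\le\; T_{(1+\delta)n},
\]
so at least one $L_n \in [n,(1+\delta)n]$ satisfies $\sqrt{\var(\tau(L_n))} = O_\delta(T_{(1+\delta)n}/n)$. An a priori coupling between trees of depth $L_n$ and $(1+\delta)n$ gives $T_{(1+\delta)n} = (1+O(\delta))\,T_{L_n}$, so the right-hand side is $O_\delta(T_{L_n}/L_n)$, and $\bbE|\bar\tau(L_n)| \le \sqrt{\var(\tau(L_n))}$ yields (ii). The maximally constrained case of (i) follows by the same argument, now using $T_L = O(L)$ at $p<p_c$ (from subadditivity and the positive spectral gap of the infinite-tree process), which collapses $O(T_{L_n}/L_n)$ to $O_\delta(1)$. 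For the minimally constrained case in (i) the recursion is based on $\min$ rather than $\max$, so the max mean-shift step is replaced by a direct renewal computation: at $p<p_c$ one writes $\tau(L)$ as a sum of order $L$ weakly correlated exponential-like increments, whose $L^1$ fluctuations are then controlled by a Berry--Esseen-type argument.

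The main obstacle is the uniform anti-concentration required in (c), i.e.\ a bound $\var(\tau(L)) \ge c > 0$ for $L$ large, or equivalently a density-type lower bound $\bbP(\tau(L) \in [s,s+1]) \ge c$ uniformly over a non-trivial range of $s$. I expect this to come from the final Poisson waiting time at the root in the recursion, but making the bound uniform in $L$ will require some care, particularly near $p=p_c$ where the tails of $\tau(L)$ are heavier. A secondary point is the regularity estimate $T_{(1+\delta)n} = (1+O(\delta))\,T_{L_n}$ at $p=p_c$, which I would establish via a coupling that shows the extra depth contributes only a multiplicative $(1+O(\delta))$ factor to the mean hitting time.
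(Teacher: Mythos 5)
Your high-level architecture --- the recursion $\tau(L)=\Phi_L\bigl(\tau^{(1)}(L-1),\dots,\tau^{(k)}(L-1)\bigr)+\xi_L$, a per-level increment inequality, and pigeonhole over $L\in[n,(1+\delta)n]$ --- tracks the paper's, and you are right that $\xi_L\ge 0$ lets one drop its dependence on the $\tau^{(i)}$'s when taking a lower bound on $T_{\rm hit}$. However, the increment inequality you lean on is the wrong one, and that is the entire source of the ``main obstacle'' you flag.

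You want $T_L-T_{L-1}\ge c_k\sqrt{\var(\tau(L-1))}$ via the mean-shift bound $\bbE[\max_i X_i]-\bbE X_1\ge c_k\sqrt{\var(X_1)}$. That inequality is simply false in general (take $X_1\in\{0,M\}$ with $\bbP(X_1{=}M)=\epsilon$ and let $\epsilon\downarrow 0$): it genuinely needs anti-concentration, which you do not supply and which there is no obvious way to extract uniformly in $L$. The paper avoids this entirely with the Dekking--Host argument of~\cite{DH91}: for $X',X''$ i.i.d.,
\[
\bbE\bigl[\max(X',X'')\bigr]=\bbE X'+\tfrac12\,\bbE|X'-X''|\ge \bbE X'+\tfrac12\,\bbE|X'-\bbE X'|,
\]
so that $T_{\rm hit}(L+1)-T_{\rm hit}(L)\ge\tfrac12\,\bbE|\bar\tau(L)|$ holds unconditionally. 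This controls the $L^1$ quantity you ultimately need directly, instead of detouring through variance and Cauchy--Schwarz. Replacing your step (b) with this, and your pigeonhole with the contradiction argument of Lemma~\ref{treelem:1} (which bounds the chosen increment by $\trel$ using the Aldous relation $T_{\rm hit}\le\lambda\,\tmix$ together with Theorems~\ref{io e C} and~\ref{noi}), closes the maximally constrained case with no anti-concentration input at all.

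For the minimally constrained model your plan is incorrect, not merely incomplete. Representing $\tau(L)$ as a sum of order $L$ weakly correlated increments and invoking Berry--Esseen yields $L^1$ fluctuations of order $\sqrt{L}$, not $O(1)$; that is exactly the one-dimensional East-process behaviour, which a CLT argument cannot improve. The $O(1)$ concentration on trees comes precisely from the branching together with the $\min$: the paper applies the $\min$-version of Dekking--Host, $\bbE[\min(X',X'')]\le\bbE X'-\tfrac12\,\bbE|X'-\bbE X'|$, and --- because in the $\min$ recursion the extra delay $\xi_L$ can no longer be dropped --- it also needs the crucial Lemma~\ref{l.1} showing $\sup_L\sup_{\omega\in\cG_L}\bbE_\omega[\tau(L)]<\infty$, handling the analogue of your $\xi_L$ via a distinguished-tree / spectral-gap argument. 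Your proposal does not address this extra additive term at all, and without it the $\min$ recursion only gives $T_{\rm hit}(L+1)\ge\bbE[\min_i\tau^{(i)}(L)]$, which is the useless direction.
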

The second result concerns the cutoff phenomenon.
\begin{theorem}
\label{th:main4} Consider the maximally constrained model.
\begin{enumerate}[(i)]
\item If $p<p_c$ then for any $\d>0$ and any large enough $n$  there exists $L_n\in [n,(1+\d)n]$ such that
\[
|\tmix(L_n,\e)-T_{\rm hit}(L_n)|= O_{\e,\d}(1) \quad \forall \e\in (0,1).
\]
\item If $p=p_c$ then for any $\d>0$ and any large enough $n$ there exists $L_n\in [n,(1+\d)n]$ such that
\[
|\tmix(L_n,\e)-T_{\rm hit}(L_n)|=O_{\e,\d}\bigl(L_n^{-1} \,T_{\rm hit}(L_n)\bigr) \quad\forall \e\in(0,1).
\]
\end{enumerate}
\end{theorem}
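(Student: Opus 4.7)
The plan is to adapt the strategy of Theorem~\ref{th:main2} to the tree, with the role of the East front passage time played by $\t(L)$, the first legal ring at the root of $\bbT_L$ starting from $\mathbf{1}$. Three ingredients drive the argument: (a) $\t(L)$ is a coupling time for the basic grand coupling, since by the time the root of the $\mathbf{1}$-chain is legally rung, every vertex of $\bbT_L$ has necessarily been legally rung (a legal ring at any vertex forces all its children, starting from $\mathbf{1}$, to have previously been rung), and shared Bernoulli updates then force coalescence throughout $\bbT_L$; (b) sharp concentration of $\t(L)$ around $T_{\rm hit}(L)$ from Theorem~\ref{th:main3}, with window $O_\d(1)$ in case~(i) and $O_\d(T_{\rm hit}(L)/L)$ in case~(ii); (c) a matching lower bound via a distinguishing test event.

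\emph{Upper bound.} The coupling yields immediately $d_{\TV}(t)\le \sup_{\o,\o'}\|\bbP^{\bbT_L,t}_\o-\bbP^{\bbT_L,t}_{\o'}\|\le \bbP_{\mathbf{1}}(\t(L)>t)$. A Markov inequality applied to the centered variable $\bar\t(L_n)$ gives, for $t=T_{\rm hit}(L_n)+C$,
\[
\bbP_{\mathbf{1}}\bigl(\t(L_n)>t\bigr)\le \frac{\bbE|\bar\t(L_n)|}{C}\le \frac{O_\d(1)}{C}
\]
in case~(i), and $\le O_\d(T_{\rm hit}(L_n)/L_n)/C$ in case~(ii). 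Choosing $C$ of the appropriate order in $1/\e$ delivers $\tmix(L_n,\e)\le T_{\rm hit}(L_n)+O_{\e,\d}(\cdot)$ for every $\e\in(0,1)$.

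\emph{Lower bound.} At $t=T_{\rm hit}(L_n)-s$, the event $A=\{\o_r=1\}$ already settles the lower bound for every $\e\in(0,q)$ (with $q=1-p$): since the root is still $1$ on $\{\t(L_n)>t\}$, one has $\bbP_{\mathbf{1}}^t(A)\ge \bbP_{\mathbf{1}}(\t(L_n)>t)\ge 1-O_\d(1)/s$ whereas $\pi(A)=p$, giving $d_{\TV}(t)\ge q-O_\d(1)/s>\e$ once $s$ is of order $O_{\e,\d}(1)$ (respectively $O_{\e,\d}(T_{\rm hit}(L_n)/L_n)$ in case~(ii)). For $\e\in[q,1)$ the witness must be enlarged: the distributional recursion underlying Theorem~\ref{th:main3} implies that $T_{\rm hit}(L_n)-T_{\rm hit}(L_n-j)$ grows at most at the rate of the concentration window, so that the first legal rings $\t_v$ at every $v$ in a ball of radius $j^{*}=j^{*}(\e,\d)$ around the root remain $>t$ with probability at least $1-\e/2$, by Theorem~\ref{th:main3} applied to each $\t_v(L_n-\mathrm{dist}(v,r))$ (the subtrees rooted at the $k$ children of $r$ being dynamically independent). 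Testing $A_{j^{*}}=\bigcap_{\mathrm{dist}(v,r)\le j^{*}}\{\o_v=1\}$, whose $\pi$-probability is at most $p^{k^{j^{*}}}<\e/2$, then produces $d_{\TV}(t)\ge 1-\e$.

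\emph{Main obstacle.} The delicate step is obtaining the simultaneous concentration of the family $\{\t_v\}_{\mathrm{dist}(v,r)\le j^{*}}$ at a single time scale $t$: the window of Theorem~\ref{th:main3} must be tight not only at the root but at every vertex of shallow depth, which in turn requires $T_{\rm hit}(L)-T_{\rm hit}(L-j)$ to grow at most at the window rate. This fact emerges from the distributional recursion itself, but only along a well-chosen sequence of scales. The freedom to pick $L_n\in[n,(1+\d)n]$ is precisely what ensures the existence of a scale where Theorem~\ref{th:main3} is simultaneously sharp at every relevant depth, in the same spirit as the ``infinitely many scales'' mechanism pointed out in the Remark following Theorem~\ref{th:main1}.
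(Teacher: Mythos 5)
The upper bound in your proposal is essentially identical to the paper's (coupling at $\t(L_n)$, Markov's inequality on $\bar\t(L_n)$, and the bound $\bbE|\bar\t(L_n)| \le 2[T_{\rm hit}(L_n+1)-T_{\rm hit}(L_n)] \le 2c\,\trel(L_n)$ from \eqref{eq:tree7} and Lemma~\ref{treelem:1}). The genuine issue is in your lower bound for $\e \ge q$, and it is precisely the one you flag as the ``main obstacle''---but you do not resolve it, and in fact the tools in the paper do not let you resolve it as stated.

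Your test event $A_{j^*}=\bigcap_{\mathrm{dist}(v,r)\le j^*}\{\o_v=1\}$ requires you to show that, with probability $\ge 1-\e/2$, no vertex in the ball of radius $j^*$ has had a legal ring by time $t=T_{\rm hit}(L_n)-s$. You propose to do this by applying Theorem~\ref{th:main3} separately to each $\t_v$, which would need $\bbE|\bar\t(L_n-j)|$ to be of window size for all $j\le j^*$ simultaneously. But Theorem~\ref{th:main3} (via Lemma~\ref{treelem:1}) only produces, in each window $[n,(1+\d)n]$, a \emph{single} good scale $L_n$ where the increment $T_{\rm hit}(L_n+1)-T_{\rm hit}(L_n)$ is controlled; it gives you nothing about $L_n-1,\dots,L_n-j^*$. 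There is no multi-scale version of Lemma~\ref{treelem:1} in the paper, and the claim that ``the freedom to pick $L_n\in[n,(1+\d)n]$ ensures a scale where Theorem~\ref{th:main3} is simultaneously sharp at every relevant depth'' is not proved and does not follow from the counting argument that underlies Lemma~\ref{treelem:1}.

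The paper's proof routes around this entirely. It tests the event that the root is connected by a path of $1$'s to some vertex at distance $\ell$ (not that \emph{all} of $B_\ell$ is $1$); this has small $\pi$-probability by subcritical percolation at $p\le p_c=1/k$, and dynamically it is implied by the \emph{single} random variable $\t^{\rm max}(L_n-\ell)$ (the time all vertices at depth $\ell$ have been updated) being $>t$. The gap $\t(L_n)-\t^{\rm max}(L_n-\ell)$ is controlled directly by Lemma~\ref{l.2} (with tail bound $O(d^{-1})$ at $p<p_c$ and $O(d^{-1/3})$ at $p=p_c$, via the cluster-size bound of Lemma~\ref{l.1bis} and the distinguished-tree estimate~\eqref{eq:tree15}), so no concentration at the intermediate depths $L_n-1,\dots,L_n-\ell$ is ever invoked; the lower bound is then finished by contradiction against Theorem~\ref{th:main3} at the single scale $L_n$. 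I would recommend replacing the test event $A_{j^*}$ by the paper's percolation event and importing Lemma~\ref{l.2}, which removes the multi-scale requirement and makes the $\e<q$ / $\e\ge q$ case split unnecessary.
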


\subsection{Previous work}
Before proving our results we recall the main findings of \cite{MT}
and \cite{CMRTtree}. We now formally define the critical density for the OFA-jf model:
\[
p_c =\sup\{p\in[0,1]:\text{0 is simple eigenvalue of } \cL\},
\]
where $\cL$ is the generator of the process. The regime $p<p_c$ is called the {\sl ergodic regime} and we say that
an {\sl ergodicity breaking transition} occurs at the critical density
$p_c$.

Let
\[
g_p(\l):=p\sum_{i=k-j+1}^{k}\binom{k}{i}\l^i (1-\l)^{k-i}
\]
be the natural bootstrap percolation recursion map (cf.~\cite{MT})
associated to the OFA-jf process and let
\[
\tilde p :=\sup \{p\in[0,1]:~\l=0 \text{ is the unique fixed point of } g_p(\l)\}.
\]
In \cite{MT} it was proved that $p_c=\tilde p$ and that $p_c\in (0,1)$
for $j\ge 2$ and $p_c=1$ for $j=1$. Notice that, for $j=k$, the value
$\tilde p$ coincides with the site percolation threshold on $\bbT$ so
that $p_c=\tilde p=1/k$.

Consider now the finite volume OFA-jf process on $\O_{\bbT_L}$ and let
$\mu^t_\o$ be the law of the process at
time $t$ when the initial configuration is $\o$.
Further let $h^t_\o$ be the relative density of $\mu^t_\o$ w.r.t the
reversible stationary measure $\pi$.
Define the family of mixing times $\{T_a(L)\}_{a\ge 1}$ by
\[
T_a(L):= \inf\left\{t\ge 0:\ \max_\o\pi\left(|h^t_\o -1|^a\right)^{1/a}\le 1/4\right\}.
\]
Notice that $T_1(L)$ coincides with the usual mixing time $\tmix(L)$ of the chain  (see,
e.g.,~\cite{LPW}) and that, for any $a\ge 1$, one has $T_1(L)\le
T_a(L)$. Further let $\trel(L)$ be the relaxation time of the chain,
ie the inverse of the spectral gap of the generator $\cL_{\bbT_L}$.

\begin{theorem}[\cite{MT}]
\label{io e C}\
  \begin{enumerate}[(i)]
  \item Assume $p<p_c$ and consider  the finite
  volume OFA-jf model on $\O_{\bbT_L}$. Then
\[
\sup_L \trel(L)<\infty.
\]
If instead $p>p_c$ then $\trel(L)$ is exponentially large in $L$.
\item For all $p\in (0,1)$ there exists a constant $c>0$ such that
\[
T_2(L+1)-T_2(L)\le c \, \trel(L).
\]
In particular
\[
\tmix(L)\le T_2(L) \le c\, \trel(L)\, L
\]
  \end{enumerate}
\end{theorem}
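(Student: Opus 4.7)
For the subcritical upper bound ($p<p_c$, $\sup_L \trel(L)<\infty$), the plan is to set up a recursion along the tree for the constrained Dirichlet form. View $\bbT_L$ as the root $r$ together with $k$ pendant subtrees $\cT^{(1)},\dots,\cT^{(k)}$, each isomorphic to $\bbT_{L-1}$, and use the conditional variance decomposition
\[
\var_\pi(f)=\bbE_\pi\bigl[\var(f\mid \o_r)\bigr]+\var\bigl(\bbE[f\mid \o_r]\bigr).
\]
The first summand factorises as a sum of variances on the $k$ pendant subtrees; by induction each is bounded by $\trel(L-1)$ times its own constrained Dirichlet form, which is part of the full Dirichlet form of $\cL_{\bbT_L}$. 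The second summand is handled via an auxiliary ``root $+$ children'' block update whose legality rate is $g_p$-like, so the whole recursion closes provided the iterate of the bootstrap map $g_p$ stays bounded away from $1$, which is precisely the condition $p<p_c$. For the supercritical lower bound ($p>p_c$), a standard Cheeger/bottleneck argument applies: the nontrivial fixed point of $g_p$ shows (via FKG on trees) that a positive $\pi$-fraction of configurations contains a frozen subtree near the root whose elimination requires a cascade of $O(L)$ improbable transitions. The corresponding conductance is $e^{-cL}$, hence $\trel(L)\ge e^{cL}$.

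\textbf{Proof plan for part (ii).} I would establish the one-step recursion $T_2(L+1)-T_2(L)\le c\,\trel(L+1)$ by running the process on $\bbT_{L+1}$ in two phases, again decomposing $\bbT_{L+1}$ into the root plus the $k$ subtrees isomorphic to $\bbT_L$ (the oriented constraint ensures that the projected dynamics on each subtree is exactly OFA-jf on $\bbT_L$). In the first phase of length $T_2(L)$, the $L^2$-norm of the relative density of $\mu^t_\o$ with respect to $\pi$, restricted to each subtree conditional on its parent, drops below $1/4$. In the second phase of length $c\,\trel(L+1)$, the spectral-gap bound $\|P_tf-\pi f\|_2\le e^{-t/\trel(L+1)}\|f-\pi f\|_2$ eliminates the residual $L^2$-distance, essentially contributed by the still-unmixed root marginal. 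Tensorisation of variance along the product structure of the subtrees bundles the two phases into the claimed recursion; iterating in $L$ gives $T_2(L)\le c\,\trel(L)\,L$, and $\tmix(L)\le T_2(L)$ by Cauchy--Schwarz.

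\textbf{Main obstacle.} The most delicate point is the subcritical upper bound in part (i). A naive induction would introduce a multiplicative loss per level that explodes as $L\to\infty$; one must set up the constrained recursion so that the per-level loss factor is the iterate of the bootstrap map $g_p$, which converges to $0$ precisely when $p<p_c$. Verifying that this iteration is contracting, while absorbing the residual conditional-variance terms into Poincar\'e inequalities on subtrees of one fewer level, is the heart of the argument and is exactly where the threshold $p_c$ enters. A secondary subtlety in part (ii) is that $T_2$ (rather than $T_1=\tmix$) is the correct quantity to induct on, since $L^2$-contraction tensorises cleanly along the tree while total-variation distance does not.
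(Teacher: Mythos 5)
This theorem is not proved in the paper at all --- it is imported verbatim from \cite{MT} (with the critical-density asymptotics stated separately via \cite{CMRTtree}), so there is no ``paper's own proof'' to compare against. What follows is therefore an assessment of the sketch on its own terms against what is known from \cite{MT}.

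The main gap is in the subcritical upper bound of part (i). A single-level conditional variance decomposition, with the second summand $\var(\bbE[f\mid \o_r])$ controlled by a root-plus-children block, yields at best a recursion of the form $\trel(L)\le \trel(L-1)+c$ with $c$ a $p$-dependent but $L$-independent constant: the Poincar\'e constant of a one-level block involving the root and its $k$ children is bounded by a constant over $\pi(j\ \text{children are }0)$, which is a fixed positive number and does not shrink with $L$. Iterating gives $\trel(L)=O(L)$, not $\sup_L\trel(L)<\infty$. The device in \cite{MT} (in the spirit of \cite{CMRT}) that actually makes the recursion close is a long-range auxiliary block dynamics in which a vertex is resampled whenever an entire subtree of depth $\ell$ beneath it admits a bootstrap-legal configuration; the constraint probability for such a block is governed by the $\ell$-fold iterate $g_p^{(\ell)}(1)$, which tends to $0$ geometrically exactly when $p<p_c$, and one then compares this auxiliary dynamics to the true one via canonical paths. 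Your phrase ``the per-level loss factor is the iterate of the bootstrap map $g_p$'' names precisely the right quantity, but a root-plus-children block only sees the one-step map $g_p$, not its iterate, so the proposed decomposition as written cannot produce the uniform bound. For part (ii), the two-phase plan and the observation that $T_2$ rather than $T_1=\tmix$ is the right quantity to induct on (because $L^2$ contraction tensorises across the subtrees) are both correct and consistent with \cite{CMST}/\cite{MT}. A small slip: you write ``restricted to each subtree conditional on its parent,'' but the oriented constraint points from children to parent, so the subtree process is autonomous and there is no conditioning on the parent to carry through; the nontrivial point is rather how the $L^2$ distance of the joint law on $\O_{\bbT_{L+1}}$ is controlled by the subtree $L^2$ distances plus a root relaxation, and that is where the technical content of the proof lies.
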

The second result concerns the critical behavior $p=p_c$.
\begin{theorem}[\cite{CMRTtree}]
\label{noi}Consider the maximally constrained model $j=k$ and choose $p=p_c$. Then there exists $\b\ge 2$ and $c>0$ such that
\begin{align*}
  c^{-1} L^2\le \trel(L)\le c \,L^{\beta}.
\end{align*}
Moreover,
 \begin{equation*}
c^{-1}L T_{\rm rel}(L) \leq \tmix(L)\le T_2(L) \le
c L\,T_{\rm rel}(L).
 \end{equation*}

\end{theorem}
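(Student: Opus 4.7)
The plan is to split the proof into three pieces: the polynomial bounds on $\trel(L)$, the upper comparison $T_2(L)\le cL\trel(L)$, and the lower comparison $\tmix(L)\ge c^{-1}L\trel(L)$. The engine is the critical scaling of the bootstrap map for the maximally constrained model. Here $g_p(\l)=p(1-(1-\l)^k)$ has $g_p'(0)=pk$, which equals $1$ exactly at $p=p_c=1/k$; expanding to second order gives $g_p(\l)=\l-\tfrac{k-1}{2}\l^2+O(\l^3)$, so the iterates $\l_n:=g_p^n(1)$ decay only diffusively, $\l_n\sim 2/((k-1)n)$. Every polynomial estimate below traces back to this Taylor expansion.

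For the lower bound $\trel(L)\ge c^{-1}L^2$ one can argue via the variational formula $\trel^{-1}=\inf_f \cE(f,f)/\var_\pi(f)$ with a carefully chosen test function, or equivalently via a bottleneck/hitting-time inequality. The natural candidate is an observable that encodes the depth of the maximal chain of 1's descending from the root: under $\pi$ its tail is of order $\l_L\sim 1/L$ by the critical scaling, and the slow diffusive recurrence of this depth produces a Dirichlet form ratio of order $1/L^2$. For the matching polynomial upper bound $\trel(L)\le cL^\beta$ I would recurse on $L$: decomposing $\bbT_L$ as the root joined to $k$ copies of $\bbT_{L-1}$, the block Poincar\'e inequality of \cite{MT} reduces the problem to estimating the effective rate at which the root constraint is satisfied by the dynamics. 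Combining this with the inductive hypothesis yields a recursion of the form $\trel(L)\le \mathrm{poly}(L)\cdot \trel(L-1)$, hence a polynomial bound overall.

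For the sandwich around $\tmix$ the easy pieces are: $\tmix(L)\le T_2(L)$ follows from Cauchy--Schwarz between $L^1(\pi)$ and $L^2(\pi)$; and $T_2(L)\le cL\trel(L)$ is obtained by telescoping the bound of Theorem~\ref{io e C}(ii), $T_2(L+1)-T_2(L)\le c\trel(L)$, together with the monotonicity $\trel(\ell)\le \trel(L)$ for $\ell\le L$ (immediate from the variational formula upon restricting to functions of the top $\ell$ levels). The nontrivial piece is $\tmix(L)\ge c^{-1}L\trel(L)$. Here I would construct an observable localized near the root whose stationary autocorrelation time is $\Theta(\trel(L))$ but whose equilibration from the all-ones start requires $L$ \emph{sequential} such relaxations along a single line of descent; an Aldous--Brown style inequality $\tmix\ge c^{-1}\bbE_\pi[\tau_A]$ for an appropriate rare event $A$ (encoding the completion of such a sequential cascade) then converts this into the desired lower bound.

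The principal obstacle is precisely the factor-$L$ gap in the sandwich: natural observables on $\bbT_L$ tend to relax \emph{in parallel} across the $k$ subtrees at rate $\trel(L)$, and one must instead isolate an observable (or a rare set $A$) that depends simultaneously on macroscopic events across all $L$ levels and cannot benefit from parallelism. At criticality the diffusive scaling of the cascade makes this plausible --- a single cascade from a leaf up to the root pays a polynomial-in-$L$ waiting cost per level --- but turning this picture into a rigorous lower bound requires a careful analysis of the non-equilibrium joint law of zero-clusters under the dynamics, and it is exactly this analysis that dictates the eventual exponent $\beta$.
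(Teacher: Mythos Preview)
The paper does not contain a proof of this statement: Theorem~\ref{noi} appears in the subsection ``Previous work'' and is simply quoted from \cite{CMRTtree}, just as Theorem~\ref{io e C} is quoted from \cite{MT}. There is therefore nothing in the present paper to compare your proposal against.

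That said, a brief remark on your sketch is in order. The easy pieces you identify are indeed easy: $\tmix\le T_2$ is Cauchy--Schwarz, and the telescoping of Theorem~\ref{io e C}(ii) gives $T_2(L)\le cL\trel(L)$ exactly as you say. Your identification of $p_c=1/k$ and the diffusive iterate $\l_n\sim 2/((k-1)n)$ is also correct and is the starting point of \cite{CMRTtree}. However, your discussion of the two hard inequalities --- the polynomial upper bound on $\trel(L)$ and especially the lower bound $\tmix(L)\ge c^{-1}L\trel(L)$ --- remains at the level of heuristics. For the latter you invoke an ``Aldous--Brown style inequality'' for a rare set $A$ encoding a sequential cascade, but you have not specified $A$, and your own final paragraph acknowledges that isolating an observable that cannot benefit from parallelism across the $k$ subtrees is precisely the obstacle. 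This is not a proof sketch so much as a statement of what one would \emph{like} to do; the actual argument in \cite{CMRTtree} is substantially more involved. If you wish to supply a self-contained proof rather than a citation, you will need to consult that paper.
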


\subsection{Proof of Theorem~\ref{th:main3}}
We first need a preliminary result saying that, for infinitely many values of
$L$,
the increments of $T_{\rm hit}(L)$ can be controlled by the
corresponding relaxation time.
\begin{lemma}
\label{treelem:1} There exists a
constant $c_1$ such that, for all $\d>0$ and all $n$ large enough, the
following holds.
\begin{enumerate}[(a)]
\item In the maximally constrained model at $p\le p_c$
\[
\max\Bigl(T_{\rm hit}(L_n)-T_{\rm hit} (L_n-1),\ T_{\rm hit}(L_n+1)-T_{\rm hit} (L_n)\,\Bigr)\le \frac{c_1}{\d} \,\trel((1+\d)n),
\]
for some $L_n\in [n,(1+\d)n]$.
\item In the minimally constrained model
\[
T_{\rm hit}(L_n+1)-T_{\rm hit} (L_n)\ge -\frac{c_1}{\d} \,\trel((1+\d)n)
\]
for some $L_n\in [n,(1+\d)n]$.
\end{enumerate}
\end{lemma}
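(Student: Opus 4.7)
The plan is to combine an \emph{a priori} upper bound on $T_{\rm hit}(L)$ with an averaging argument on the telescoped sum of the increments $a_L:=T_{\rm hit}(L+1)-T_{\rm hit}(L)$. First I would establish
\[
T_{\rm hit}(L)\le C_p\, L\, \trel(L)
\]
in both regimes via the classical reversible-chain hitting-time inequality $\bbE_\omega[\tau_A]\le C\,\tmix/\pi(A)$ applied to $A=\{\omega:\text{the root's constraint is satisfied}\}$ (whose $\pi$-mass is bounded below by a constant depending only on $p$), combined with the mixing/relaxation bounds $\tmix\le C\,L\,\trel(L)$ from Theorems~\ref{io e C}(ii) and~\ref{noi}. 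Since $\trel$ is non-decreasing in $L$, this yields $T_{\rm hit}(L)\le C'_p\, n\,\trel((1+\delta)n)$ uniformly in $L\in[n,(1+\delta)n]$.

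For part (a) the decisive extra input is the \emph{monotonicity} $T_{\rm hit}(L+1)\ge T_{\rm hit}(L)$ for the maximally constrained model. Heuristically, the $\bbT_L$-dynamics is the $\bbT_{L+1}$-dynamics in which the level-$L$ vertices are treated as unconstrained leaves, so extra update events are enabled and propagate additional zeros through the lower levels; thus the first legal ring at the root should happen no later in $\bbT_L$ than in $\bbT_{L+1}$. Granting monotonicity, $a_L\ge 0$ and
\[
\sum_{L=n-1}^{(1+\delta)n} a_L \;=\; T_{\rm hit}((1+\delta)n+1)-T_{\rm hit}(n-1)\;\le\; C''_p\, n\,\trel((1+\delta)n).
\]
By Markov's inequality, for $c_1$ large enough strictly fewer than half of the $\delta n+2$ indices in the sum can be ``bad'' (i.e.\ have $a_L>(c_1/\delta)\trel((1+\delta)n)$), and a pairing pigeonhole then produces two consecutive good indices $L_n-1,L_n\in[n-1,(1+\delta)n]$, which deliver the required two-sided bound.

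For part (b) no monotonicity is needed. For the minimally constrained model $p_c=1$, so Theorem~\ref{io e C}(i) gives $\sup_L \trel(L)<\infty$ and hence $T_{\rm hit}(L)\le C'_p L$. Using only the non-negativity $T_{\rm hit}(n)\ge 0$,
\[
\sum_{L=n}^{(1+\delta)n-1} a_L \;=\; T_{\rm hit}((1+\delta)n)-T_{\rm hit}(n)\;\ge\; -T_{\rm hit}(n)\;\ge\; -C'_p\, n.
\]
If every $a_L<-(c_1/\delta)\trel((1+\delta)n)$ on this range then the sum would be $<-c_1 n\,\trel((1+\delta)n)\le -c_1 n$, contradicting the previous display once $c_1$ is large enough. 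Hence some $L_n\in[n,(1+\delta)n]$ satisfies the required lower bound on the forward increment.

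The hardest step is the monotonicity claim in part (a): the naive coupling with shared Poisson clocks and Bernoulli variables does \emph{not} preserve any pointwise order on configurations, since a ring at a level-$L$ vertex updates it freely in $\bbT_L$ (possibly flipping a $0$ to a $1$) while being blocked in $\bbT_{L+1}$. Establishing $T_{\rm hit}(L+1)\ge T_{\rm hit}(L)$ rigorously will require either a more delicate censoring-based coupling or an indirect variational argument exploiting reversibility with respect to the common product measure $\pi$.
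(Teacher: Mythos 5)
Your overall plan — the a priori bound $T_{\rm hit}(L)\le C\,L\,\trel(L)$ coming from $T_{\rm hit}\le C\tmix$ plus Theorems~\ref{io e C}/\ref{noi}, followed by a telescoping/averaging argument to locate a scale $L_n$ at which the relevant increments are small — is essentially the same as the paper's; the paper phrases the averaging step as a proof by contradiction (``suppose all increments in $[n,(1+\d)n]$ exceed $\frac{c_1}{\d}\trel$; then $T_{\rm hit}((1+\d)n)$ is too large''), but the content is identical to your Markov-plus-pigeonhole step. Part (b) you handle by specializing to $\sup_L\trel(L)<\infty$; the paper instead stays uniform via $T_{\rm hit}(n)\le \l\tmix(n)\le c\l\, n\,\trel(n)$, but both routes are fine.

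The genuine issue is the one you flag: part (a) does require the monotonicity $T_{\rm hit}(L+1)\ge T_{\rm hit}(L)$ so that the ``good'' increments in the averaging step cannot cancel the ``bad'' ones. You are right that a pointwise coupling of the $\bbT_L$- and $\bbT_{L+1}$-dynamics under shared clocks does not preserve any partial order (unconstrained leaves at level $L$ behave differently), so that route is a dead end. But the monotonicity you need has a much more elementary proof than a censoring coupling or a variational argument, and it is exactly what the paper establishes (just after this lemma, en route to Theorem~\ref{th:main3}): starting from the all-ones configuration on $\bbT_{L+1}$, the root cannot experience a legal ring before \emph{every} one of its $k$ children has been updated at least once (they all start at $1$, and the maximal constraint requires all children to be $0$), and the OFA-$kf$ dynamics restricted to the $k$ subtrees rooted at those children are independent copies of the process on $\bbT_L$ started from all ones. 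Hence $\t(L+1)$ stochastically dominates $\max_{i\le k}\t^{(i)}(L)$ with $\t^{(i)}(L)$ i.i.d.\ copies of $\t(L)$, and in particular $T_{\rm hit}(L+1)\ge T_{\rm hit}(L)$. No coupling between the two finite-volume chains, and no censoring, is required. With that observation filled in, your argument goes through and matches the paper's.
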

\begin{proof}
Fix $\d$ and $n\ge 1/\d$ and consider the maximally constrained model.
Using part (ii) of Theorem~\ref{io e C},
\begin{equation}
  \label{tree:1}
\tmix(n)\le T_2(n)\le c \sum_{i=1}^n \trel(i)\le c \, n\trel(n),
\end{equation}
where we used the fact that $\trel(i)\le \trel(n)$ for all $i\le n$.
Fix now $c_1>0$ and suppose that, for all $i\in [n, (1+\d)n-1]$,
\begin{gather*}
\max\Bigl(T_{\rm hit}(i+1)-T_{\rm hit}(i) ,\,T_{\rm hit}(i+1)-T_{\rm
  hit}(i)\,\Bigr)
\ge \frac{c_1}{\d}
\,\trel\left((1+\d)n\right).
\end{gather*}
In particular
\[
T_{\rm hit} ((1+\d)n)\ge c_1n \,\trel\left((1+\d)n\right) /2.
\]
On the other hand, using the results in\cite{Aldous},  there exists a
constant $\l=\l(p)$ such that
\begin{equation}
  \label{tree:2}
T_{\rm hit}((1+\d)n) \le \l \tmix((1+\d)n).
\end{equation}
In conclusion, using Theorem~\ref{io e C},
\begin{gather*}
\trel\left((1+\d)n\right) \le \frac{2}{c_1n }T_{\rm hit} ((1+\d)n)
\le \frac{2\l}{c_1n} \tmix((1+\d)n)\\\le
\frac{2\l c (1+\d)}{c_1}\trel((1+\d)n),
\end{gather*}
and we reach a contradiction by choosing
$c_1> 2\l c(1+\d)$.

Similarly, in the minimally constrained case, assume
\[
T_{\rm hit}(i+1)-T_{\rm hit}(i)\le -\frac{c_1}{\d}
\,\trel\left((1+\d)n\right),\quad \forall i\in [n, (1+\d)n-1],
\]
so that
\[
0\le T_{\rm hit} ((1+\d)n)\le T_{\rm hit}(L)-c_1n \,\trel\left((1+\d)n\right).
\]
Using again Theorem~\ref{io e C} together with \eqref{tree:2} we get
\begin{gather*}
\trel\left((1+\d)n\right)\le \frac{1}{c_1n  }T_{\rm hit}(L)\\
\le
\frac{\l}{c_1n } \tmix(L)\le
\frac{c\l}{c_1n } L
\trel(L)\le \frac{\l c (1+\d)}{c_1}\trel((1+\d)n).
\end{gather*}
and again we reach a contradiction by choosing
$c_1> \l c(1+\d)$.
\end{proof}
\subsubsection{Proof of theorem~\ref{th:main3} for the maximally
  constrained model}
The key observation here is that, for any $L\in \bbN$, the hitting
time $\t(L+1)$ is stochastically larger than the maximum between $k$
independent copies $\{\t^{(i)}(L)\}_{i=1}^k$ of the hitting time $\t(L)$. That follows
immediately by noting that:
\begin{itemize}
\item starting from the configuration identically equal to $1$, a
  vertex $x$ can be updated only after the first time at which all
  its $k$-children have been updated;
\item the projection of the OFA-jf process
on the sub-trees rooted at each one of the children of the root of $\bbT_{L+1}$
are independent OFA-jf processes on $\bbT_L$.
\end{itemize}
Henceforth, the proof follows from a beautiful argument of Dekking and Host that was used in~\cite{DH91}
to derive tightness for the minima of certain branching random walks.
\begin{align*}
  T_{\rm hit}(L+1)&\ge
  \bbE\bigl[\max_{i=1,\dots,k}\t^{(i)}(L)\bigr]\\
&\ge \frac 12 \bbE\bigl[\t^{(1)}(L)+\t^{(2)}(L) + |\t^{(1)}(L)-\t^{(2)}(L)|\bigr]\\
&=T_{\rm hit}(L) +\frac 12
\bbE\bigl[|\t^{(1)}(L)-\t^{(2)}(L)|\bigr]\\
&\ge T_{\rm hit}(L) +\frac 12 \bbE\bigl[|\bar \t^{(1)}(L)|\bigr],
\end{align*}
since whenever $X',X''$ are i.i.d.\ copies of a variable one has
$\E|X'-X''| %= \E[ \E [|X'-X''| \mid X''] ]
 \geq \E \left| X' - \E X'\right|$
by conditioning on $X''$ and then applying Cauchy-Schwarz. Altogether,
\begin{equation}
  \label{eq:tree7}
\bbE\bigl[|\bar \t^{(1)}(L)|\bigr] \le 2 \left(T_{\rm
    hit}(L+1)-T_{\rm hit}(L)\right).
\end{equation}
The conclusion of the theorem now follows from Lemma~\ref{treelem:1}
and Theorem~\ref{io e C}.
\qed

\subsubsection{Proof of theorem~\ref{th:main3} for the minimally constrained model}
In this case we define
\[
\t_{\rm min}(L):= \min_{i=1,\dots,k}
\tau^{(i)}(L),
\]
where $\tau^{(i)}(L)$ is the first time that the
$i^{th}$-child of the root of $\bbT_{L+1}$ is updated and we write
\[
T_{\rm hit}(L+1)\le  \bbE\bigl[\t_{\rm min}(L)\bigr] + \sup_L\sup_{\o\in \cG_L} \bbE_\o\bigl[\t(L)\bigr],
\]
with $\cG_L$ the set of configurations in $\O_{\bbT_{L}}$ with
$\o_r=1$ and at
least one zero among the children of the children of the root $r$.
\begin{lemma}
\label{l.1}
$
\sup_L\sup_{\o\in \cG_L} \bbE_\o \t(L)< \infty.
$
\end{lemma}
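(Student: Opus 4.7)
The plan is to establish a uniform positive lower bound on the probability of a quick legal ring at the root, then iterate this to control the expectation. Fix $\o\in\cG_L$ and let $g$ be a grandchild of the root with $\o_g=0$; denote its parent (a child of the root) by $c_1$.

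First I fix a constant $T>0$ (say $T=2$) and consider the event $F$ that in $[0,T]$: the Poisson clock at $g$ has no ring; the clock at $c_1$ has exactly one ring, occurring in $[0,T/2]$, with Bernoulli coin outcome $0$; the clock at the root has at least one ring in $[T/2,T]$. Because the Poisson clocks and coin tosses at $g$, $c_1$ and the root are mutually independent, a direct product computation gives $\bbP_\o(F)\ge\a(p)>0$, uniformly in $L$ and in $\o\in\cG_L$. On $F$ the zero at $g$ persists throughout $[0,T]$, so the constraint at $c_1$ is satisfied at its (unique) ring time and $c_1$ is reset to $0$ and stays $0$ until time $T$; hence the root's ring in $[T/2,T]$ is legal and $\t(L)\le T$.

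Next I convert this uniform lower bound on $\bbP_\o(\t(L)\le T)$ into a bound on $\bbE_\o[\t(L)]$ by iteration. The sub-event $F'=\{g\text{ has no rings in }[0,T]\}$ has probability $e^{-T}$; when intersected with $\{\t(L)>T\}$ (which forces $\o_r(T)=1$, since the root has not been updated) it gives $\o(T)\in\cG_L$ automatically. Setting $p_n:=\bbP_\o(\t(L)>nT)$ and applying the Markov property at multiples of $T$, one obtains a recursion of the form $p_n\le(1-\a')\,p_{n-1}$ with $\a'=\a'(p)>0$ uniform in $L$ and $\o\in\cG_L$; integrating the resulting geometric tail $\bbE_\o[\t(L)]=\int_0^\infty \bbP_\o(\t(L)>s)\,ds$ yields the desired uniform bound $\bbE_\o[\t(L)]\le C(p)<\infty$.

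The main obstacle is the last step in the iteration: verifying that $\bbP_\o(\o(T)\in\cG_L\mid\t(L)>T)$ is bounded below by a positive constant uniformly in $L$ and $\o\in\cG_L$. The difficulty is that on $\{\t(L)>T\}$ the initial zero grandchild $g$ may have flipped to $1$ while no other grandchild of the root has turned to $0$, so $\cG_L$ may have been left. Two possible routes: (a) enlarge the ``good-failure'' sub-event of $F$ (e.g., intersect with $F'$) so that on its complement we are back in $\cG_L$ at time $T$, combined with FKG-type correlation control; (b) invoke Theorem~\ref{io e C}(i), which provides a uniform lower bound on the spectral gap of the OFA-1f process for $p<p_c$, together with $\pi(\cG_L)=p(1-p^{k^2})\ge c>0$ uniformly in $L$, to obtain a uniform-in-$L$ bound on the expected hitting time of $\cG_L$ from any initial configuration with $\o_r=1$. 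Either route closes the loop and delivers the lemma.
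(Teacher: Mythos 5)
Your construction of the event $F$ is correct and does give a uniform positive lower bound on $\bbP_\o(\t(L)\le T)$ over $\o\in\cG_L$, and you correctly single out the iteration step as the crux. Unfortunately, neither of the two routes you propose to close the gap works. Route (a) fails because the OFA-$j$f dynamics is not attractive: a legal ring resets the spin to a fresh Bernoulli independently of its current value, so the single-site update does not preserve the coordinatewise partial order and there is no FKG-type inequality available; intersecting the failure event with $F'$ also discards a non-negligible piece of the failure probability that you would still have to control, and you are back at the original problem. Route (b) rests on a false claim: the expected hitting time of $\cG_L$ is \emph{not} bounded uniformly in $L$ over all configurations with $\o_r=1$. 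Starting from the all-ones configuration, a zero at a grandchild of the root can only appear after a chain of legal rings propagating from the leaves upward (the constraint looks downward, and only the leaves are unconstrained), so by finite speed of information propagation $\bbE_{\mathbf{1}}[\t_{\cG_L}]\ge cL$. A uniform spectral-gap bound together with $\pi(\cG_L)\ge c>0$ controls hitting times \emph{from stationarity}, not from worst-case initial data; the relevant worst-case scale is the mixing time, which by Theorem~\ref{io e C}(ii) is of order $L\,\trel(L)$, not $O(1)$.

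The paper's argument sidesteps this entirely by never conditioning on the survival event $\{\t(L)>t\}$. It writes $\bbE_\o[\t(L)]\le\frac{1}{1-p}\int_0^\infty|\bbP_\o(\o_r(t)=1)-p|\,dt$ and bounds the integrand by $\left(\frac{1}{p\wedge q}\right)^{|\cT_0|}e^{-t/\trel(L)}$ using the distinguished-tree construction of \cite{CMST}; the prefactor is uniformly bounded precisely because for $\o\in\cG_L$ the initial distinguished tree $\cT_0$ (the root together with its unconstrained children) has at most $k+1$ vertices, and $\sup_L\trel(L)<\infty$ by Theorem~\ref{io e C}(i). No control of the law of $\o(t)$ given $\{\t(L)>t\}$ is needed anywhere. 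To salvage your iteration you would need a quantitative estimate on how fast the shallowest zero below the root can drift downward, matched against how your constant $\a$ degrades with that depth, and it is not clear this can be made to close; the paper's $L^2$/spectral approach is genuinely the cleaner route.
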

Assuming the lemma we write
\begin{align*}
  T_{\rm hit}(L+1)&\le \bbE\bigl[\t_{\rm min}(L)\bigr] +c\\
&\le \frac 12 \bbE\bigl[\t^{(1)}(L)+\t^{(2)}(L) -
  |\t^{(1)}(L)-\t^{(2)}(L)|\bigr] +c\\
&=T_{\rm hit}(L) -\frac 12
\bbE\bigl[|\t^{(1)}(L)-\t^{(2)}(L)|\bigr]+c.
\end{align*}
Thus
\[
\bbE\bigl[|\bar \t(L)|\bigr]\le \bbE\bigl[|\t^{(1)}(L)-\t^{(2)}(L)|\bigr]
\le 2\bigl(T_{\rm hit}(L)-T_{\rm hit}(L+1)\bigr) +2c.
\]
Hence, if $L_n\in [n, (1+\d)n]$ satisfies property (b) of Lemma~\ref{treelem:1}, we get
\[
\bbE\bigl[|\bar \t^{(1)}(L)|\bigr]\le 2 \frac{c_1}{\d}
\trel\left((1+\d)n\right) +2c.
\]
The conclusion of the theorem now follows from Theorem~\ref{io e C}.
\qed
\begin{proof}[Proof of Lemma~\ref{l.1}]
Fix $L$
and $\o\in \cG_L$ and observe that
\begin{gather*}
\bbP_\o(\o_r(t)=1)\\
=\bbP_\o(\o_r(t)=1\mid \t(L)\ge t)\bbP_\o(\t(L)\ge t)+
\bbP_\o(\o_r(t)=1\mid \t(L)<t)\bbP_\o(\t(L)<t) \\
= (1-p)\bbP_\o(\t(L)\ge t) +p.
\end{gather*}
That is because $\o_r=1$ at time $t=0$ while it is a Bernoulli(p)
random variable given that the root has
been updated at least once. Thus
\[
 \bbE_\o[\t(L)]\le \frac{1}{1-p}\int_0^\infty dt  \,
 |\bbP_\o(\o_r(t)=1) -p |.
\]
In order to bound from above the above integral we closely follow the strategy of \cite{CMST}*{\S4}. In what
follows, for any finite subtree $\cT$ of $\bbT$, we will refer to the \emph{children} of $\cT$ as the vertices of $\bbT\setminus \cT$
with their parent in $\cT$. Using the graphical construction, for all times $t\ge 0$ we define
%a (random) \emph{distinguished} set of zeros $\cZ_t$ and
a (random) \emph{distinguished} tree $\cT_t$ according to the following algorithm:
\begin{enumerate}[(i)]
\item
$\cT_0$ coincides with the root together with those among its children which have
at least one zero among their children (\ie they are unconstrained).
\item %$\cZ_t=\cZ_0$ and
$\cT_t=\cT_0$ until the first ``legal'' ring at time $t_1$ at one of
the children of $\cT_0$, call it $x_0$.
\item  $\cT_{t_1}=\cT_0\cup \{x_0\}$.
\item Iterate.
\end{enumerate}
Exactly as in \cite{CMST}*{\S4.1}, one can easily verify the
following key properties of the above construction:
\begin{enumerate}[(a)]
\item for all $t\ge 0$ each leaf of $\cT_t$ is unconstrained \ie
  there is a zero among its children;
\item if at time $t=0$ the variables $\{\o_x\}_{x\in \cT_0}$ are not
  fixed by instead are i.i.d with law $\pi$, then, conditionally on $\{\cT_s\}_{s\le t}$,
  the same is true for the variables $\{\o_x(t)\}_{x\in \cT_t}$.
\item For all $i\ge 1$, given $\cT_{t_i}$ and $t_i$, the law of the
  random time $t_{i+1}-t_i$ does not depend on the variables (clock
  rings and coin tosses) of the graphical construction in $\cT_{t_i}$.
\end{enumerate}
As in \cite{CMST}*{Eqs.~(4.8) and~(4.10)}, the above properties imply
that
\begin{align*}
\var_\pi(\bbE_\o\left[\o_r(t)\mid \{\cT_s\}_{s\le t}\right]\le
e^{-2t/\trel(L)}.
%\max_{\o\in \cG_L}|\bbE_\o\left(\o_r(t)\right)|&\le \frac{1}{p\wedge q}e^{-t/\trel(L)}.
\end{align*}
Therefore,
\begin{align*}
    \sup_{\o\in \cG_L}\big|\,\bbE_\o\left[\o_r(t)-p\right]\big|&\le
\sup_{\o\in \cG_L}\bbE_\o\,\Big|\, \bbE_\o\left[\o_r(t)-p\mid  \{\cT_s\}_{s\le
    t}\right]\Big|  \\
&\le \left(\frac{1}{p\wedge q}\right)^{|\cT_0|}
\sup_{\o\in \cG_L}\bbE_\o\Bigl[\sum_{\o\in \O_{\cT_0}}\pi(\o)\big|\,\bbE_\o\left(\o_r(t)-p\mid \{\cT_s\}_{s\le t}\right)\big|\Bigr]\\
&\le \left(\frac{1}{p\wedge q}\right)^{|\cT_0|}\sup_{\o\in \cG_L}\bbE_\o\left[\var_{\pi}\left(\bbE_\o\left(\o_r(t)\tc \{\xi_s\}_{s\le t}\right)\right)^{1/2}\right]
\\
&\le \left(\frac{1}{p\wedge q}\right)^{|\cT_0|}e^{-t/\trel(L)}\,.
\end{align*}
By Theorem~\ref{io e C} we have that  $\sup_L\trel(L)<\infty$, and the proof is complete.
\end{proof}
Consider the maximally constrained process on $\O_{\bbT_{L+1}}$ and
let $\t^{\rm max}(L)$ be the first time at which all the children of
the root have been updated at least once starting from the configuration identically
equal to one. For a given $\o\in \O_{\bbT_{L+1}}$ and $x\in
\bbT_{L+1}$, further let $\cC_\o(x)$ be the maximal subtree rooted at $x$ where
$\o$ is equal to one.
Finally, recall that
$\bbP(\cdot)$ denotes the basic coupling given by the graphical
construction and that $\o(t)$ denotes the process at time $t$ started
from the initial configuration $\o$.
\begin{lemma}
\label{l.1bis} There exists some $c>0$ such that
\begin{align*}
\max_{\o\in \O_{\bbT_{L+1}}}\bbP\left( |\cC_{\o(\t^{\rm
      max}(L))}(r)|\ge n\right)&\le c\, \pi\left(|\cC_\o(r)|\ge \frac{n-2}{k-1}\right),
\end{align*}
and in particular,
\[
\max_{\o\in \O_{\bbT_{L+1}}}\bbE\left|\cC_{\o(\t^{\rm max}(L))}(r)\right|\le c \sum_\o\pi(\o)|\cC_\o(r)|.
\]
\end{lemma}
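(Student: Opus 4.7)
The approach exploits the defining structure of the maximally constrained ($j=k$) model: a vertex $x$ can receive a legal ring only at a moment when all $k$ of its children are simultaneously in state $0$, and upon a legal ring $\omega_x$ is reset to a fresh Bernoulli$(p)$ coin. By definition of $\tau^{\max}(L)$, each of the $k$ children of the root has had at least one legal ring by time $\tau^{\max}(L)$, so each such child's value at time $\tau^{\max}(L)$ equals the outcome of the last coin at that site.

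My first step would be to explore the 1-cluster $\mathcal{C}:=\mathcal{C}_{\omega(\tau^{\max}(L))}(r)$ by a BFS starting from $r$, establishing along the way a Bernoulli$(p)$ comparison for each newly exposed vertex. The key structural observation is the following: if $x\in\mathcal{C}\setminus\{r\}$ has parent $y\in\mathcal{C}$ and $y$ has had at least one legal ring by $\tau^{\max}(L)$ (say at a last time $t_y$), then at time $t_y^-$ all children of $y$, including $x$, were equal to $0$; since $\omega_x(\tau^{\max}(L))=1$, vertex $x$ must itself have had a legal ring in $(t_y,\tau^{\max}(L)]$ whose outcome was $1$. Applying this inductively from the root's children (which by definition of $\tau^{\max}(L)$ have been updated) on down, every non-root vertex of $\mathcal{C}$ is certified to have a ``latest'' legal ring before $\tau^{\max}(L)$ whose Bernoulli outcome equals $1$.

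The next step is to set up the right filtration to turn the observation into a stochastic domination. Conditioning on all Poisson clock rings, on the Bernoulli coins at strict descendants of $x$, and on the exposure history of the BFS, the event $\{\omega_x(\tau^{\max}(L))=1\}$ reduces to the value of a specific Bernoulli$(p)$ coin at $x$, which is independent of the conditioning. Revealing vertices one at a time then shows that $\mathcal{C}$ (past the root) is stochastically dominated by a Galton--Watson tree with Binomial$(k,p)$ offspring distribution, i.e.\ by the $\pi$-percolation cluster at $r$ on $\bbT$. The factor $(n-2)/(k-1)$ in the statement is a technical concession of this comparison: the root itself need not have been updated by $\tau^{\max}(L)$ and so does not cleanly contribute an independent Bernoulli factor, and a further constant is absorbed by boundary/leaf effects; the denominator $k-1$ arises when one groups generations of the dynamical cluster to fit it inside a GW-cluster bound, at a cost of a factor of $k-1$ per generation. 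Once the tail bound is established, the expectation bound follows by summation:
\[
\bbE\left|\mathcal{C}\right|
=\sum_{n\ge 1}\bbP(|\mathcal{C}|\ge n)
\le c\sum_{n\ge 1}\pi\!\left(|\mathcal{C}_\omega(r)|\ge \tfrac{n-2}{k-1}\right)
\le c'(k-1)\,\bbE_\pi|\mathcal{C}_\omega(r)|,
\]
which rewrites as $\sum_\omega\pi(\omega)|\mathcal{C}_\omega(r)|$ up to a constant.

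The main obstacle I expect is the careful conditional-independence bookkeeping in the exposure argument: the last-legal-ring times $t_y$ depend intricately on both the Poisson clocks and on the Bernoulli coins throughout the subtree (through legality of rings), so one must specify precisely which coins are revealed and which are kept ``fresh'' at each exploration step. The BFS order and the choice of filtration must be arranged so that, at each step, the coin whose value decides inclusion in $\mathcal{C}$ is genuinely independent of what has already been exposed. A secondary technicality is the exact accounting producing the factor $(n-2)/(k-1)$, which is where one must handle the root and the lack of a clean one-to-one correspondence between dynamical-cluster vertices and GW-cluster vertices.
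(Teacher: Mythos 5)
Your approach is genuinely different from the paper's, and I think it has a real gap that you flag but do not close.

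The paper's proof is short and avoids exploration altogether. It first passes from the max over initial $\o$ to the $\pi$-average of the same probability, using that by time $\t^{\rm max}(L)$ the basic coupling has coupled all non-root vertices (the $\pi$-average accounts for the uncoupled root up to a constant). It then union-bounds over which child $a_i$ attains $\t^{\rm max}(L)=\t^{(i)}(L)$, paying a factor $k$. The key observation is that at $\t^{(i)}(L)$ all children of $a_i$ equal $0$, so $a_i$ contributes at most one vertex to $\cC(r)$; the remaining mass of $n-2$ must therefore sit in the other $k-1$ subtrees, and by pigeonhole one of them holds a cluster of size $\ge (n-2)/(k-1)$. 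Since $\t^{(i)}(L)$ is measurable with respect to the graphical data in $a_i$'s subtree, it is independent of the processes on the other $k-1$ subtrees, which, started from $\pi$, remain $\pi$-distributed at any independent stopping time by stationarity. That pigeonhole (not any per-generation accounting) is the source of $(n-2)/(k-1)$, and your ``cost of a factor $k-1$ per generation'' explanation is not what happens.

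The gap in your exploration argument is precisely the conditional-independence bookkeeping you identify as ``the main obstacle.'' The stopping time $\t^{\rm max}(L)=\max_i\t^{(i)}(L)$ is a function of the Poisson clocks and coins in \emph{all} of the children's subtrees, so the identity of ``the last legal ring of $x$ before $\t^{\rm max}(L)$'' (and therefore the coin whose outcome decides membership of $x$ in $\cC$) is itself a function of $x$'s own coins and its siblings'. Conditioning on ``all Poisson rings and the Bernoulli coins at strict descendants of $x$'' does not pin down $\t^{\rm max}(L)$, so the deciding coin is not a fresh Bernoulli$(p)$ independent of the conditioning. Concretely, if $x$ lies in the subtree of the child $a_i$ that attains the max, then $\t^{\rm max}(L)=\t^{(i)}(L)$ is the \emph{first} legal ring of $a_i$, at which instant every child of $a_i$ is $0$ — a qualitatively different situation from the case $\t^{\rm max}(L)>\t^{(i)}(L)$. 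Handling this dichotomy cleanly is exactly what the paper's union bound over $i$ achieves; trying to emulate it vertex-by-vertex in a BFS essentially reconstructs the paper's argument with extra work. Finally, note that if your claimed domination of $\cC\setminus\{r\}$ by a Binomial$(k,p)$ Galton--Watson tree were literally correct, it would give $\bbP(|\cC|\ge n)\le\pi(|\cC_\o(r)|\ge n-1)$, a bound strictly stronger than the lemma's, which should itself raise a flag.
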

\begin{proof}
Recall that under the basic coupling all the
starting configurations have coupled by time $\t^{\rm max}(L)$. Hence,
\begin{align*}
\bbP&\left(\exists \,\o \in\O_{\bbT_{L+1}}:\ |\cC_{\o(\t^{\rm max}(L))}(r)|\ge n\right) =
\sum_\o\pi(\o) \bbP\left(|\cC_{\o(\t^{\rm max}(L))}(r)|\ge n\right)\\
&\le k \sum_\o\pi(\o) \bbP\left(|\cC_{\o(\t^{(1)}(L))}(r)|\ge n\, ,\, \t^{\rm max}(L)=\t^{(1)}(L)\right) \\
&\le k \sum_\o\pi(\o) \bbP\left(|\cC_{\o(\t^{(1)}(L))}(r)|\ge n\right) ,
\end{align*}
where $\t^{(1)}(L)$ is the first time that the first (in some chosen order)
child of the root has been updated starting from all ones. By
construction, at time $\t^{(1)}(L)$ the first child has all its
children equal to zero. Therefore the event $\{\cC_{\o(\t^{(1)}(L))}(r)|\ge n\}$ implies that there exists some other child $x$ of the
  root such that $\cC_{\o(\t^{(1)}(L))}(x)$ has cardinality at least $(n-2)/(k-1)$. Using reversibility and the independence between
  $\t^{(1)}(L) $ and the
  process in the subtree of depth $L$ rooted at $x$ together with a
  union bound over the choice of $x$, we conclude that
  \begin{gather*}
    \sum_\o\pi(\o) \bbP\left(|\cC_{\o(\t^{(1)}(L))}(r)|\ge n \right)\le
(k-1)\pi\left(|\cC_\o(r)|\ge \frac{n-2}{k-1}\right).
  \end{gather*}
The statement of the lemma follows at once by summing over $n$.
\end{proof}
Using the lemma we can now prove the analogue of Lemma~\ref{l.1}
\begin{lemma}
\label{l.2} Fix any positive integer $\ell$. For all $p\le p_c$ there exists $c=c(\ell,p)$ such that
\begin{align*}
&(i) &T_{\rm hit}(L+\ell)&\le \bbE\left[\t^{\rm max}(L)\right]+ c\,\trel(L)\qquad
&\text{if } p<p_c,\\
&(ii)&T_{\rm hit}(L+\ell)&\le \bbE\left[\t^{\rm max}(L)\right]+ c L \trel(L) \qquad
&\text{if } p=p_c.
\end{align*}
Moreover,  for any $d>0$,
\begin{equation}
\label{eq:tail}
  \bbP \bigl(\t(L+\ell)-\t^{\rm max}( L) \ge d\,T_{\rm
  rel}\bigr)=
\begin{cases}
 O(d^{-1})&\text{ if }p<p_c,\\
O(d^{-1/3})&\text{ if }p=p_c.\end{cases}
\end{equation}
\end{lemma}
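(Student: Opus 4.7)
My plan is as follows. Decompose the hitting time as
\[
\t(L+\ell) = \t^{\rm max}(L) + \Delta,
\]
where $\Delta$ is the residual waiting time for a legal ring at the root after all $k$ children of the root have been updated at least once. Since $T_{\rm hit}(L+\ell)\le \bbE[\t^{\rm max}(L)]+\bbE[\Delta]$, the problem reduces to estimating $\bbE[\Delta]$ and the tail of $\Delta$.

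The crucial random object is the cluster $\cC:=\cC_{\o(\t^{\rm max}(L))}(r)$ of ones at the root at time $\t^{\rm max}(L)$. Lemma~\ref{l.1bis} shows that the law of $|\cC|$ is dominated by $c$ times the $\pi$-cluster at $r$. For $p<p_c=1/k$ this is a subcritical Galton--Watson cluster, so $|\cC|$ has exponential tails and finite exponential moments; at $p=p_c$, the standard branching heuristic gives $\pi(|\cC(r)|\ge m)\asymp m^{-1/2}$ truncated at depth $L$, so $\bbE_\pi|\cC|=O(L)$.

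I would then adapt the distinguished-tree construction of Lemma~\ref{l.1}, initialized now at time $\t^{\rm max}(L)$ with $\cT_0$ equal to $\cC$ together with the unconstrained children of $\cC$. The defining property of $\t^{\rm max}(L)$ is that every child of the root has been refreshed, so the strong Markov property at $\t^{\rm max}(L)$ implies that, conditionally on $\cC$, the boundary spins of $\cC$ are distributed as independent $\pi$-marginals. Properties (a)--(c) of the distinguished-tree construction carry over verbatim, and the same Dirichlet-form computation as in the proof of Lemma~\ref{l.1} yields
\[
\bigl|\bbE\bigl[\o_r(\t^{\rm max}(L)+t)\bigr]-p\bigr| \le \bbE\Bigl[\bigl(\tfrac{1}{p\wedge q}\bigr)^{|\cC|}\Bigr]\, e^{-t/\trel(L)},
\]
so $\bbE[\Delta]\le C\,\trel(L)\,\bbE\bigl[(1/(p\wedge q))^{|\cC|}\bigr]/(1-p)$.

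For part (i), in the subcritical regime the exponential moment of $|\cC|$ is finite (with an exponent chosen close enough to $p_c$ by interpolation), so $\bbE[\Delta]=O(\trel(L))$. For part (ii) at $p=p_c$, the polynomial tail of $|\cC|$ forces a truncation: split the expectation according to $\{|\cC|\le m\}$ versus $\{|\cC|>m\}$, use the distinguished-tree bound on the former and a trivial bound on the latter, and optimize $m\asymp\log L$ to obtain $\bbE[\Delta]=O(L\,\trel(L))$. For the tail bound \eqref{eq:tail}, Markov's inequality applied to the expectation above suffices in the subcritical case, giving $O(d^{-1})$. At criticality, balancing the cluster-size tail $\bbP(|\cC|\ge m)=O(m^{-1/2})$ against the conditional exponential decay $\bbP(\Delta\ge d\trel(L)\mid |\cC|=m)\le e^{-c d/(p\wedge q)^{-m}}$ and optimizing $m\asymp\log d$ produces the advertised $O(d^{-1/3})$. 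The main obstacle is to rigorously re-deploy the distinguished-tree argument at the random stopping time $\t^{\rm max}(L)$: one must apply the strong Markov property carefully, using that by definition of $\t^{\rm max}(L)$ the spins along the boundary of $\cC$ have been refreshed and therefore behave as i.i.d.\ Bernoulli($p$), which is exactly what is needed for property (b) of the distinguished-tree construction.
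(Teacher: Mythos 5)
Your overall plan mirrors the paper's: decompose at $\t^{\rm max}(L)$, control the residual waiting time $\Delta$ via the cluster $\cC := \cC_{\o(\t^{\rm max}(L))}(r)$, bound the law of $|\cC|$ via Lemma~\ref{l.1bis}, and estimate $\bbE[\Delta]$ through the distinguished-tree construction of~\cite{CMST}. The structural outline is right. However, there is a genuine and consequential error in the key estimate.

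You write the distinguished-tree bound as
\[
\bigl|\bbE\bigl[\o_r(\t^{\rm max}(L)+t)\bigr]-p\bigr| \le \bbE\Bigl[\bigl(\tfrac{1}{p\wedge q}\bigr)^{|\cC|}\Bigr]\, e^{-t/\trel(L)},
\]
and then integrate in $t$ to get $\bbE[\Delta]\lesssim \trel(L)\,\bbE\bigl[a^{|\cC|}\bigr]$ with $a=1/(p\wedge q)$. This drops the crucial truncation. The distinguished-tree argument in fact gives
\[
|\bbP_\o(\o_r(t)=1)-p|\le \min\Bigl[1,\, a^{|\cC_\o|}e^{-t/\trel(L)}\Bigr],
\]
and integrating $\min[1, a^m e^{-t/\trel}]$ over $t\in(0,\infty)$ yields $O(m\,\trel)$ (the constant $1$ up to time $m\trel\log a$, then an exponential tail contributing $O(\trel)$). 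Thus the correct conditional bound is $\bbE_\o[\t(L+1)]\le c\,|\cC_\o|\,\trel(L)$, \emph{linear} in $|\cC|$. This linearity is what makes everything work: $\bbE_\pi|\cC|=O(1)$ for $p<p_c$ and $O(L)$ at $p=p_c$, yielding (i) and (ii) directly via Lemma~\ref{l.1bis}, and for \eqref{eq:tail} at criticality the decomposition at $|\cC|\le d^{2/3}$ gives $\tfrac{c}{d}\,\bbE[\mathds{1}_{\{|\cC|\le d^{2/3}\}}|\cC|]=O(d^{-1/3})$ while $\bbP(|\cC|>d^{2/3})=O(d^{-1/3})$.

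Your exponential version $\bbE[a^{|\cC|}]$ is not just suboptimal; it is genuinely infinite in the regimes you need. At $p=p_c$ the cluster has a polynomial tail, so $\bbE_\pi[a^{|\cC|}]=\infty$, and the truncation/optimization you sketch at $m\asymp\log L$ (or $m\asymp\log d$) does not close: the contribution from $\{|\cC|\le m\}$ already costs $a^m\trel = L^{\log a}\trel$ (or $d^{\log a}\trel$), which overshoots badly whenever $\log a>1$, and the ``trivial bound'' you invoke on the complementary event is either circular (it would be $O(L\trel)$, i.e.\ the thing being proved) or divergent if derived from the same exponential estimate. Even in the subcritical case the fix by ``choosing the exponent by interpolation'' is not available — $a=1/(p\wedge q)$ is fixed by $p$ — and one can check directly (e.g.\ $k=2$, small $p$, where the total-progeny MGF has radius of convergence $\tfrac{1}{4pq}<\tfrac1p$) that $\bbE_\pi[a^{|\cC|}]=\infty$ for a range of $p<p_c$. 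The missing $\min[1,\cdot]$ truncation is therefore not a cosmetic omission but the heart of the lemma, and without it the argument fails in both regimes.
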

\begin{proof}
For simplicity we give a proof for the case $\ell=1$. The general proof is similar and we omit the details. We first claim that, starting from $\o\in \O_{\bbT_{L+1}}$, one has
\begin{equation}
  \label{eq:tree15}
\bbE_\o[\t(L+1)]\le c\, |\cC_\o|\trel(L)
\end{equation}
for some constant $c$, where $|\cC_\o|$ denotes the cardinality of $\cC_\o$.
If we assume the claim, the strong Markov property implies that
\[
T_{\rm hit}(L+1)\le \bbE\left[\t^{\rm max}(L)\right]+c\, \bbE\left[|\cC_{\o(\t^{\rm max}(L))}|\right]\,\trel(L)
\]
where all expectations are computed starting
from all ones. Using Lemma~\ref{l.1bis},
\[
\bbE\left[|\cC_{\o(\t^{\rm
      max}(L))}|\right]\le c' \sum_\o\pi(\o)|\cC_\o(r)|
\] for some constant $c'$ and parts (i) and (ii) of the lemma
follow by standard results on percolation on regular trees (see, e.g.,~\cite{Grimmett}).

To prove \eqref{eq:tree15} we proceed exactly as in Lemma
\ref{l.1}. We first write
\[
 \bbE_\o\left[\t(L+1)\right]\le \frac{1}{1-p}\int_0^\infty dt  \,
 |\bbP_\o(\o_r(t)=1) -p |
\]
and then we apply the results of \cite{CMST}*{\S4} to get that
\[
|\bbP_\o(\o_r(t)=1) -p |\le \min\left[1,\left(\frac{1}{p\wedge q}\right)^{|\cC_\o|}e^{-t/\trel(L)}\right].
\]
Thus,
\[
\frac{1}{1-p}\int_0^\infty dt  \, |\bbP_\o(\o_r(t)=1) -p |\le c
|\cC_\o|\, \trel(L)
\]
for some constant $c$ and \eqref{eq:tree15} follows.

Lastly we prove \eqref{eq:tail}. The subcritical case $p<p_c$ follows easily from $(i)$ and Markov's inequality, while the critical case follows from \eqref{eq:tree15}. To see this, write
\begin{align*}
\bbP&\bigl(\t(L+1)-\t^{\rm max}(L)\ge d\,\trel(L)\bigr)\\
&=\bbP\bigl(\t(L+1)-\t^{\rm max}(L)\ge d\,\trel(L)\, ,\, |\cC_{\o(\t^{\rm max}(L))}| \le {d}^{2/3}\bigr)\\
&+\bbP\bigl(\t(L+1)-\t^{\rm max}(L)\ge d\,\trel(L)\,,\,|\cC_{\o(\t^{\rm max}(L))}| >{d}^{2/3}\bigr).
\end{align*}
Using Markov's inequality and \eqref{eq:tree15},
\begin{align*}
\bbP&\bigl(\t(L+1)-\t^{\rm max}(L)\ge d\,\trel(L)\, ,\,
|\cC_{\o(\t^{\rm max}(L))}| \le {d}^{2/3}\bigr)\\
&\le \frac{1}{d \trel(L)}\,
\bbE\left[{\mathds 1}_{\{|\cC_{\o(\t^{\rm max}(L))}| \le
    d^{2/3}\}}\bbE_{\o(\t^{\rm
      max}(L))}\left[\t(L+1)\right]\right]\\
&\le \frac{c}d \, \bbE\left[{\mathds 1}_{\{|\cC_{\o(\t^{\rm max}(L))}| \le
    d^{2/3}\}}|\cC_{\o(\t^{\rm max}(L))}|\right] \leq c {d}^{-1/3}.
\end{align*}
The second term is also $O\left(d^{-1/3}\right)$
using Lemma~\ref{l.1bis} and the fact that, for $p=p_c$,
\begin{equation*}
\pi\left(|\cC_\o|\ge
   n\right)= O(1/\sqrt{n}).
\qedhere
\end{equation*}
\end{proof}

\subsection{Proof of Theorem~\ref{th:main4}}
Fix $\e\in (0,1/2)$.  Let  $\{L_n\}$ be a sequence such that,
for all $n$ large enough,
\begin{equation}
  \label{eq:tree12}
\max\Bigl(T_{\rm hit}(L_n)-T_{\rm hit}(L_n-1) ,T_{\rm hit}(L_n+1)-T_{\rm hit}(L_n)\Bigr)\le c \trel(L_n),
\end{equation}
for some constant
$c$ independent of $n$. The existence of such a sequence is guaranteed
by Lemma~\ref{treelem:1}. We begin by proving that
\begin{equation}
  \label{eq:tree11}
\tmix(L_n,\e)\le T_{\rm hit}(L_n)+ O_\e(\trel(L_n)).
\end{equation}
Exactly as for the East process, one readily infers from the
graphical construction that at time $\t(L_n)$ all initial configurations
$\o\in \O_{\bbT_{L_n}}$ have coupled. Therefore (cf.~\S\ref{East-cutoff}),
\[
\max_{\o,\o'}\left\|\bbP^{\bbT_{L_n},t}_\o-\bbP^{\bbT_{L_n},t}_{\o'}\right\|\le
\bbP(\t(L_n)>t).
\]
If $t=T_{\rm hit}(L_n)+\D $, Markov's inequality together with
\eqref{eq:tree7}
imply that
\begin{align*}
\bbP\left(\t(L_n)>T_{\rm hit}(L_n)+\D\right)&\le
\frac{1}{\D}\bbE\left(|\bar \t(L_n)|\right)
\le \frac{2}{\D}  \bigl[T_{\rm hit}(L_n+1)-T_{\rm
  hit}(L_n)\bigr]\\
&\le \frac{2}{\D} c\,\trel(L_n).
\end{align*}
Inequality \eqref{eq:tree11} now follows by choosing
$\D=2c\,\trel(L_n)/\e$.

Next we prove the lower bound
\begin{equation}
  \label{eq:tree11bis}
\tmix(L_n,1-\e)\ge T_{\rm hit}(L_n)- O_\e(\trel(L_n)).
 \end{equation}
Start the process from the configuration $\o$ identically equal to one
and let $\t^{\rm max}(L_n-\ell)$ be the time when all the vertices at distance $\ell$ from the root have been updated at least once.
Conditionally on $\t^{\rm max}(L-\ell)>t$,  the root is connected by a path of $1's$ to some vertex at distance $\ell$
at time $t$. On the other hand, standard percolation results for $p\le
p_c$ imply that the $\pi$-probability
of the above event is smaller than $\e/2$ provided that $\ell$ is
chosen large enough. Therefore, for such value of $\ell$,
\begin{align*}
\|\mu^t_\o-\pi\|&\ge \bbP(\t^{\rm max}(L_n-\ell) >t)-\e/2.
\end{align*}
It remains to show that
\[
\bbP(\t^{\rm max}(L_n-\ell) >t)\ge 1-\frac{\e}{2},
\]
for $t= T_{\rm  hit}(L_n)-O_\e(\trel(L))$.

We prove this by
contradiction. Let $t=T_{\rm
  hit}(L_n)-DT_{\rm rel}$, where $D$ is a constant to be specified
later, and suppose that $\bbP(\t^{\rm max}(L_n-\ell) >t)<
1-\frac{\e}{2}$. Using Lemma~\ref{l.2} we can choose a large constant $\D$ independent of $L_n$ such that
\[
\bbP(\t(L_n)-  \t^{\rm max}(L_n-\ell) \ge \D T_{\rm rel})\le \e/4,
\] and hence, by a union bound,
$$\bbP\bigl(\t(L_n)<t+\D T_{\rm rel}\bigr)>\e/4.$$
However, for large enough $D$, this contradicts Theorem~\ref{th:main3}.
Theorem~\ref{th:main4} now follows from \eqref{eq:tree11},
\eqref{eq:tree11bis}, Theorems~\ref{io e C} and~\ref{noi}, and Lemma~\ref{treelem:1}. \qed

\subsection*{Acknowledgments}
We are grateful to Y. Peres for pointing out the relevant literature on branching random walks, which led to improved estimates in Theorems~\ref{th:main3}--\ref{th:main4}. We also thank O. Zeitouni for an interesting conversation about the concentration results on trees and O. Blondel for several useful comments.
This work was carried out while F.M.\ was a
  Visiting Researcher at the Theory Group of Microsoft Research
 and S.G.\ was an intern there; they thank the group for its hospitality.

\begin{bibdiv}
\begin{biblist}

\bib{Aldous}{article}{
  author = {Aldous, David},
  title = {Random walks on finite groups and rapidly mixing {M}arkov chains},
  booktitle = {Seminar on probability, XVII},
  series = {Lecture Notes in Math.},
  volume = {986},
  pages = {243--297},
  publisher = {Springer},
  address = {Berlin},
  year = {1983},
}

\bib{AD86}{article}{
  author = {Aldous, David},
  author = {Diaconis, Persi},
  title = {Shuffling cards and stopping times},
  journal = {Amer. Math. Monthly},
  volume = {93},
  pages = {333--348},
  year = {1986},
}

\bib{AD02}{article}{
   author={Aldous, David},
   author={Diaconis, Persi},
   title={The asymmetric one-dimensional constrained Ising model: rigorous
   results},
   journal={J. Stat. Phys.},
   volume={107},
   date={2002},
   number={5-6},
   pages={945--975},
%   issn={0022-4715},
%   review={\MR{1901508 (2003e:82046)}},
%   doi={10.1023/A:1015170205728},
}

\bib{AF}{book}{
author={Aldous,David},
author={Fill, Jim},
title={Reversible Markov Chains and Random Walks on Graphs},
    note = {In preparation, \texttt{http://www.stat.berkeley.edu/$\sim$aldous/RWG/book.html}},
}

\bib{Blondel}{article}{
author = {Blondel, Oriane},
title = {Front progression for the East model},
journal = {Stochastic Process. Appl.},
volume={123},
%number={9},
pages={3430--3465},
year = {2013},
}

\bib{Bolthausen}{article}{
author={Bolthausen, E.},
title={On the central limit theorem for stationary mixing random fields},
journal={Ann. Probab.},
volume={10},
number={4},
pages={1047},
year={1982},
}

\bib{BDZ}{article}{
   author={Bolthausen, Erwin},
   author={Deuschel, Jean Dominique},
   author={Zeitouni, Ofer},
   title={Recursions and tightness for the maximum of the discrete, two
   dimensional Gaussian free field},
   journal={Electron. Commun. Probab.},
   volume={16},
   date={2011},
   pages={114--119},
%   issn={1083-589X},
%   review={\MR{2772390 (2012g:60117)}},
%   doi={10.1214/ECP.v16-1610},
}

\bib{BZ1}{article}{
   author={Bramson, Maury},
   author={Zeitouni, Ofer},
   title={Tightness for the minimal displacement of branching random walk},
   journal={J. Stat. Mech. Theory Exp.},
   date={2007},
   number={7},
   pages={P07010, 12},
%   issn={1742-5468},
%   review={\MR{2335694 (2008g:60255)}},
}

\bib{BZ2}{article}{
   author={Bramson, Maury},
   author={Zeitouni, Ofer},
   title={Tightness for a family of recursion equations},
   journal={Ann. Probab.},
   volume={37},
   date={2009},
   number={2},
   pages={615--653},
%   issn={0091-1798},
%   review={\MR{2510018 (2010c:60248)}},
%   doi={10.1214/08-AOP414},
}

\bib{CMRT}{article}{
   author={Cancrini, N.},
   author={Martinelli, F.},
   author={Roberto, C.},
   author={Toninelli, C.},
   title={Kinetically constrained spin models},
   journal={Probab. Theory Related Fields},
   volume={140},
   date={2008},
   number={3-4},
   pages={459--504},
%   issn={0178-8051},
%   review={\MR{2365481 (2009e:60212)}},
%   doi={10.1007/s00440-007-0072-3},
}

\bib{CFM}{article}{
      author={Chleboun, Paul},
      author={Faggionato, Alessandra},
      author={Martinelli, Fabio},
       title={{Time scale separation and dynamic heterogeneity in the low
  temperature East model}},
%        date={2012-12},
     journal={Comm. Math. Phys.},
     status={to appear},
%       pages={1\ndash 40},
%      eprint={arXiv:1212.2399v1},
}

\bib{CMRTtree}{article}{
    author = {Cancrini, Nicoletta},
    author = {Martinelli, Fabio},
    author = {Roberto, Cyril},
    author = {Toninelli, Cristina},
    title= {Mixing time of a kinetically constrained spin model on trees: power law scaling at criticality},
    %journal ={arXiv.org},
    %year={2013},
    journal={Probab. Theory Related Fields},
    status={to appear},
%    note={Available at \texttt{arXiv:1211.5974} (2012)},
}

\bib{CMST}{article}{
      author={Cancrini, N},
      author={Martinelli, F},
      author={Schonmann, R},
      author={Toninelli, C},
       title={{Facilitated Oriented Spin Models: Some Non Equilibrium
  Results}},
        date={2010-01},
     journal={J. Stat. Phys.},
      volume={138},
      number={6},
       pages={1109\ndash 1123},
}

\bib{Diaconis}{article}{
  author = {Diaconis, Persi},
  title = {The cutoff phenomenon in finite {M}arkov chains},
  journal = {Proc. Nat. Acad. Sci. U.S.A.},
%  FJOURNAL = {Proceedings of the National Academy of Sciences of the United States of America},
  volume = {93},
  year = {1996},
  number = {4},
  pages = {1659--1664},
}

\bib{DiFi}{article}{
    AUTHOR = {Diaconis, Persi},
    AUTHOR = {Fill, James Allen},
     TITLE = {Strong stationary times via a new form of duality},
   JOURNAL = {Ann. Probab.},
%  FJOURNAL = {The Annals of Probability},
    VOLUME = {18},
      YEAR = {1990},
    NUMBER = {4},
     PAGES = {1483--1522},
}

\bib{DiSh}{article}{
  author = {Diaconis, Persi},
  author = {Shahshahani, Mehrdad},
  title = {Generating a random permutation with random transpositions},
  journal = {Z. Wahrsch. Verw. Gebiete},
%  FJOURNAL = {Zeitschrift f\"ur Wahrscheinlichkeitstheorie und Verwandte Gebiete},
  volume = {57},
  year = {1981},
  number = {2},
  pages = {159--179},
}

\bib{DH91}{article}{
   author={Dekking, F. M.},
   author={Host, B.},
   title={Limit distributions for minimal displacement of branching random
   walks},
   journal={Probab. Theory Related Fields},
   volume={90},
   date={1991},
   number={3},
   pages={403--426},
%   issn={0178-8051},
%   review={\MR{1133373 (93b:60189)}},
%   doi={10.1007/BF01193752},
}

\bib{East-survey}{article}{
author = {Faggionato, Alessandra},
author={Martinelli, Fabio},
author={Roberto, Cyril},
author = {Toninelli, Cristina},
title = {The East model: recent results and new progresses},
%journal = {arXiv.org},
%year = {2012},
journal = {Markov Processes and Related Fields},
status= {in press},
}

\bib{FH}{article}{
  title = {Kinetic Ising Model of the Glass Transition},
  author = {Fredrickson, Glenn H.},
  author = {Andersen, Hans C.},
  journal = {Phys. Rev. Lett.},
  volume = {53},
  number = {13},
  pages = {1244--1247},
  date = {1984},
%  month = {Sep},
%  doi = {10.1103/PhysRevLett.53.1244},
%  url = {http://link.aps.org/doi/10.1103/PhysRevLett.53.1244},
%  publisher = {American Physical Society}
}

\bib{Grimmett}{book}{
   author={Grimmett, Geoffrey},
   title={Percolation},
   series={Grundlehren der Mathematischen Wissenschaften [Fundamental
   Principles of Mathematical Sciences]},
   volume={321},
   edition={2},
   publisher={Springer-Verlag},
   place={Berlin},
   date={1999},
   pages={xiv+444},
%   isbn={3-540-64902-6},
%   review={\MR{1707339 (2001a:60114)}},
}

\bib{JE91}{article}{
    author={J\"{a}ckle, J.},
    author={Eisinger, S.},
    title={A hierarchically constrained kinetic Ising model},
    date={1991},
    journal={Zeitschrift für Physik B Condensed Matter},
    volume={84},
    number={1},
%    url={http://dx.doi.org/10.1007/BF01453764},
%    publisher={Springer-Verlag},
    pages={115-124},
%    issn={0722-3277},
%    doi={10.1007/BF01453764},
}

\bib{LPW}{book}{
   author={Levin, David A.},
   author={Peres, Yuval},
   author={Wilmer, Elizabeth L.},
   title={Markov chains and mixing times},
   note={With a chapter by James G. Propp and David B. Wilson},
   publisher={American Mathematical Society},
   place={Providence, RI},
   date={2009},
   pages={xviii+371},
   isbn={978-0-8218-4739-8},
   }

\bib{MT}{article}{
      author={Martinelli, Fabio},
author = {Toninelli, Cristina},
       title={{Kinetically constrained spin models on trees}},
        date={2013-02},
     journal={Ann. Appl. Probab.},
     volume={23},
     number={5},
     date={2013},
     pages={1721--2160},
}

\bib{Saloff}{article}{
   author={Saloff-Coste, Laurent},
   title={Lectures on finite Markov chains},
   conference={
      title={Lectures on probability theory and statistics},
      address={Saint-Flour},
      date={1996},
   },
   book={
      series={Lecture Notes in Math.},
      volume={1665},
      publisher={Springer},
      place={Berlin},
   },
   date={1997},
   pages={301--413},
 }

\bib{Stein}{article}{
 author={Stein, Charles},
   title={A bound for the error in the normal approximation to the
   distribution of a sum of dependent random variables},
   conference={
      title={ Proc. of the Sixth Berkeley Symp. on Math. Statist. and Prob.},
      %address={Univ. California, Berkeley, Calif.},
      %date={1970/1971},
   },
   book={
      publisher={Univ. California Press},
      %place={Berkeley, Calif.},
   },
   date={1972},
   pages={583--602},
%   review={\MR{0402873 (53 \#6687)}},
}		

\end{biblist}
\end{bibdiv}
\end{document}